\newtheorem{Def}{Definition}[section]
\newtheorem{lem}[Def]{Lemma}
\newtheorem{tho}[Def]{Theorem}
\newtheorem{prop}[Def]{Proposition}
\newtheorem{rem}[Def]{Remark}
\newtheorem{cor}[Def]{Corollary}
\newtheorem{Asp}{Assumption}
\newcommand{\R}{\mathbb R}
\newcommand{\ud}{\mathrm d}
\newcommand{\D}{\mathbb D}
\newcommand{\OO}{\mathcal O}
\newcommand{\cH}{\mathfrak H}
\newcommand{\E}{\mathbb E}
\newcommand{\GG}{\mathbb G}
\newcommand{\C}{\mathcal C}
\newcommand{\Z}{\mathbb Z}
\numberwithin{equation}{section}
\begin{document}

\title[Convergence analysis of FDM]{
Convergence analysis of a finite difference method for stochastic Cahn--Hilliard equation}

\subjclass[2010]{65C30, 60H35, 60H15, 60H07}
\author{Jialin Hong}
\address{LSEC, ICMSEC, Academy of Mathematics and Systems Science, Chinese Academy of Sciences, and School of Mathematical Sciences, University of Chinese Academy of Sciences, Beijing 100049, China}
\email{hjl@lsec.cc.ac.cn}

\author{Diancong Jin}
\address{School of Mathematics and Statistics, Huazhong University of Science and Technology,  and Hubei Key Laboratory of Engineering Modeling and Scientific Computing, Huazhong University of Science and Technology, Wuhan 430074, China}
\email{jindc@hust.edu.cn}

\author{Derui Sheng}
\address{LSEC, ICMSEC, Academy of Mathematics and Systems Science, Chinese Academy of Sciences, and School of Mathematical Sciences, University of Chinese Academy of Sciences, Beijing 100049, China}
\email{sdr@lsec.cc.ac.cn}

\thanks{This work is supported by the National key R\&D Program of China under Grant No.\ 2020YFA0713701, National Natural Science Foundation of China (Nos.\ 11971470, 11871068, 12031020, 12022118), and the Fundamental Research Funds for the Central Universities 3004011142.}

\keywords{strong convergence rate, finite difference method, exponential Euler method, stochastic Cahn–Hilliard equation}

\begin{abstract}
This paper presents the convergence analysis of the spatial finite difference method (FDM) for the stochastic Cahn--Hilliard equation with Lipschitz nonlinearity and multiplicative noise. 
Based on fine estimates of the discrete Green function,
we prove that both the spatial semi-discrete numerical solution and its Malliavin derivative have strong convergence order $1$.
Further, by showing the negative moment estimates of the exact solution,
we obtain that the density of the spatial semi-discrete numerical solution converges in $L^1(\R)$ to the exact one. 
Finally, we apply an exponential Euler method to discretize the spatial semi-discrete numerical solution in time and show that the temporal strong convergence order is nearly $\frac38$,
 where a difficulty we overcome is to derive the optimal H\"older continuity of the spatial semi-discrete numerical solution.

%
%
%
\end{abstract}

\maketitle
\section{Introduction}

Consider 
the following stochastic Cahn--Hilliard equation
\begin{gather}\label{CH}
\begin{split}
\partial_t u+\Delta^2u&=\Delta f(u)+\sigma(u)\dot{W},\quad \text{in}~[0,T]\times\OO\\
\end{split}
\end{gather}
with initial condition $u(0,\cdot)=u_0$ and homogeneous Dirichlet boundary conditions (DBCs)
$u=\Delta u=0$ on $\partial\OO$. Here, $\OO:=(0,\pi)$, $T>0$, and $\{W(t,x),(t,x)\in[0,T]\times\OO\}$ is a Brownian sheet defined on some probability space $(\Omega,\mathscr F, \mathbb P)$. 
Assume that $u_0:\OO\rightarrow \R$ is a deterministic continuous function, and $\sigma$ is bounded and globally Lipschitz continuous.  Eq.\ \eqref{CH} is a well-known phenomenological model to  describe the complicated phase separation.
In the original form, $f$ is the derivative of the homogeneous free energy $F$ which contains a logarithmic term and in some cases  can be approximated by an even-degree polynomial with a positive dominant coefficient \cite{CC01}, for example, $F(x)=\frac{1}{4}x^4-\frac{1}{2}x^2$.
In this case, the truncation technique is usually used to localize \eqref{CH} such that the sequence $\{u_R\}_{R\ge1}$ given by
\begin{gather}\label{LCH}
\begin{split}
\partial_t u_R+\Delta^2u_R&=\Delta f_R(u_R)+\sigma(u)\dot{W},\quad \text{in}~[0,T]\times\OO\\
\end{split}
\end{gather}
can approximate $u$ in some sense  (see e.g., \cite{CC01,CH20}), where $\{f_R\}_{R\ge1}$ satisfies the global Lipschitz condition. 
Hence, an effective numerical method applied to Eq.\ \eqref{LCH} is expected to approximate  Eq.\ \eqref{CH} well.   
With this consideration, the present work investigates numerical methods for stochastic Cahn--Hilliard equations with Lipschitz nonlinearity (i.e., $f$ is globally Lipschitz continuous), which includes the linearized Cahn--Hilliard equation ($f=0$).

 The existing results on the numerical methods for stochastic Cahn--Hilliard equations mainly focus on the strong convergence analysis. Without being too exhaustive, 
we mention \cite{CCZZ18,LM11} on the finite element approximation for the case of $f=0$. 
For the case of polynomial nonlinearity and additive noise, \cite{KLM11} and \cite{FKLL18} respectively obtain the strong convergence of a spatial semi-discretization and a full discretization; 
\cite{CHS21,QW20} establish the strong convergence rates of full discretizations based on the finite element method and spectral Galerkin method in space, respectively. 
Concerning the case of multiplicative noise, \cite{CH20} presents the sharp strong convergence rate for a full discretization by using the spectral Galerkin method in space. 
In addition to the strong convergence analysis, the convergence analysis of densities of numerical solutions is also meaningful, 
which provides a theoretical foundation to approximate the density of the exact solution of the original system by means of  numerical methods.
There have been plenty results on density convergence of numerical solutions for various stochastic systems (see e.g., \cite{BT96,CCHS20,CHS19,HW96,KHA97,NP13}), of which we have not yet found relevant results for stochastic Cahn-Hilliard equations.
The present paper aims to approximate the density of the exact solution of Eq.\ \eqref{CH} via a spatial finite difference method (FDM) and present the strong convergence rate and density convergence of the associated numerical solution.

The spatial FDM has been employed to numerically solve, for instance, stochastic heat equations \cite{GI98,DG01,ACQ20} and stochastic wave equations \cite{CQ16}. 
First, we give subtle error estimates  between the discrete Green function of the spatial FDM  and the exact one. Then under the globally Lipschitz condition on $f$,
 we obtain the strong convergence order $1$
for the spatial semi-discrete numerical solution $u^n(t,x)$ based on the FDM, with $\frac{\pi}{n}$ being the spatial stepsize.  
 Further, it is shown that the Malliavin derivative of $u^n(t,x)$ has the strong convergence order $1$ as well. 
Combining the above results with the negative moment estimates of the exact solution,
we deduce that the spatial semi-discrete numerical solution admits a density, which converges in $L^1(\R)$ to the density of the exact solution.

For more effective computation, we further discretize $u^n$ via an exponential Euler method in time and obtain the full discretization $u^{m,n}=\{u^{m,n}(t,x),(t,x)\in[0,T]\times\OO\}$, where $\frac{T}{m}$ denotes the temporal stepsize.
As an explicit method, the exponential Euler method is more computationally efficient than the implicit method and does not suffer from the CFL condition. 
By investigating the temporal H\"older continuity of the spatial semi-discrete numerical solution,  we attain the strong convergence rate of the proposed method for Eq.\ \eqref{CH} with Lipschitz nonlinearity, namely 
\begin{equation}\label{Full-error}
	\|u^{m,n}(t,x)-u(t,x)\|_{L^p(\Omega)}
	\le C(\epsilon)(n^{-1}+m^{-\frac{3}{8}+\epsilon}),
\end{equation}
where $0<\epsilon\ll 1$.
The spatial convergence order $1$ and temporal convergence order nearly $\frac{3}{8}$ in \eqref{Full-error} are optimal in the sense that they coincide with the mean-square spatial and temporal H\"older continuity exponents of the exact solution, respectively. 
On the basis of \eqref{Full-error}, a localized argument leads to an $L^p(\Omega;\R)$ convergence order localized on a set of arbitrarily large probability for Eq.\ \eqref{CH} with polynomial nonlinearity. With the independent interest, when $f$ is a polynomial of degree $3$ with a positive dominant coefficient, we also establish the H\"older continuity and the uniform moment estimate of the exact solution. These are prepared for the density convergence analysis of the numerical solution of the  spatial FDM for Eq.\ \eqref{CH} with polynomial nonlinearity in our future work.


The rest of this paper is organized as follows. Section \ref{S2} gives the H\"older continuity and the uniform moment estimate of the exact solution. Then
we introduce the spatial FDM and study its strong convergence order in Section \ref{S3}. Section \ref{S4} presents the density convergence of the spatial semi-discrete numerical solution. In Section \ref{S5}, we further apply an exponential Euler method to obtain a full discretization and 
obtain its strong convergence order.

\section{Preliminaries}\label{S2}

%
%

Let $\C^\alpha(\OO)$ be the space of $\alpha$-H\"older continuous functions on $\OO$ if $\alpha\in(0,1)$, and  be the space of $\alpha$ times continuously differentiable functions on $\OO$ if $\alpha\in\mathbb N$. 
For $d\ge1$, we denote by $\|\cdot\|$ and $\langle\cdot,\cdot\rangle$ the Euclidean norm and inner product of $\R^d$, respectively. For $1\le q\le\infty$, we denote by 
$\|\cdot\|_{L^q}$  the usual norm of the space  $L^q(\OO):=L^q(\OO;\R)$. For $1\le p< \infty$ and a Banach space $(H,\|\cdot\|_H)$, let $L^p(\Omega;H)$ be the space of $H$-valued random variables with bounded $p$th moment, endowed with the norm $\|\cdot\|_{L^p(\Omega;H)}:=\left(\E[\|\cdot\|_H^p]\right)^{\frac{1}{p}}.$ 
Especially, we write $\|\cdot\|_p:=\|\cdot\|_{L^p(\Omega;\R)}$ for short. 
Hereafter, we use $C$ to denote a generic positive constant that may change from one place to another and depend on several parameters but never on the space and time stepsizes.  
Without illustrated, the supremum with respect to $t\in[0,T]$ (respectively, $x\in\OO$ and $(t,x)\in[0,T]\times\OO$) is denoted by $\sup_{t}$ (respectively, $\sup_{x}$ and $\sup_{t,x}$).
In this section, we present the regularity estimate of the exact solution of Eq.\ \eqref{CH} under Assumption \ref{A1} or \ref{A2}.

\begin{Asp}\label{A1}
$f$ satisfies the globally Lipschitz condition, i.e., there is  $K>0$ such that $$|f(y)-f(z)|\le K|y-z|,\quad\forall~y,z\in\R.$$
\end{Asp}
\begin{Asp}\label{A2}
$f$ is a polynomial of degree $3$ with a positive dominant coefficient, i.e.,
$f(x)=a_0x^3+a_1x^2+a_2x+a_3$ with $a_0>0$.
\end{Asp}

The physical importance of the Dirichlet problem is pointed out to us by M. E. Gurtin: it governs the propagation of a solidification front into an ambient medium which is at rest relative to the front \cite{DN91}; see \cite{EL92,CH20} and references therein for the study of Cahn--Hilliard equation with DBCs. In this case,
the Green function associated to $\partial_t+\Delta^2$  is  given by
$G_t(x,y)=\sum_{j=1}^\infty e^{-\lambda_j^2t}\phi_j(x)\phi_j(y),$ $t\in[0,T]$, $x,y\in\mathcal O$,
where $\lambda_j=-j^2$, $\phi_j(x)=\sqrt{2/\pi}\sin(jx),$ $j\ge1$. 
It is known that $\{\phi_j,j\ge1\}$ forms an orthonormal basis of $L^2(\mathcal O)$.
Denote $\GG_tv(x):=\int_{\mathcal O}G_t(x,y)v(y)\ud y$, $v\in \C(\OO).$ 
Similar to \cite[Lemma 1.2]{CC01},  there exist $C, c>0$ such that 
\begin{align}\label{Gtxy}
|G_t(x,y)|\le \frac{C}{t^{1/4}}\exp\Big(-c\frac{|x-y|^{4/3}}{|t|^{1/3}}\Big),\\\label{DGtxy}
|\Delta G_t(x,y)|\le  \frac{C}{t^{3/4}}\exp\Big(-c\frac{|x-y|^{4/3}}{|t|^{1/3}}\Big).
\end{align}

The well-posedness of the
stochastic Cahn--Hilliard equation under Assumption \ref{A2} with NBCs has been  established in \cite{CC01}. 
Since the Green function with DBCs and NBCs share similar properties, the existence and uniqueness of the solution to Eq.\ \eqref{CH} under Assumption \ref{A2}
can be obtained in an almost same way, and we present an outline of the idea here.
For $R\ge1$, let $K_R:\R\rightarrow\R$ be an even smooth cut-off function satisfying
\begin{align}\label{KR}
K_R(x)=1,\quad \text{if}~|x|<R;\qquad K_R(x)=0,\quad \text{if}~|x|\ge R+1,
\end{align}
and $|K_R|\le 1$, $|K_R^\prime|\le 2$. 
Consider a sequence of SPDEs
\begin{align}\label{eq.truncatedCH}
\partial_t \bar u_R(t,x)+\Delta^2\bar u_R(t,x)=\Delta \big(K_R(\|\bar{u}_R(t,\cdot)\|_{L^q})f(\bar u_R(t,x))\big)+\sigma(\bar u_R(t,x))\dot{W}(t,x)
\end{align}
with DBCs and $\bar u_R(0,\cdot)=u_0\in L^q(\OO)$ for some $q\ge4$. Define the stopping times
$$\tau_R:=\inf\{t\ge 0:\|\bar{u}_R(t,\cdot)\|_{L^q}\ge R\},\quad R\ge1.$$
Using the uniqueness of the solution of Eq.\ \eqref{eq.truncatedCH}, it is concluded from the local property of stochastic integrals that for $R^\prime>R$, $\bar{u}_{R^\prime}(t,\cdot)=\bar{u}_R(t,\cdot)$ for $t\le \tau_R$, so that a process $u$ can be defined by $u(t,\cdot)=\bar{u}_R(t,\cdot)$ for $t\le \tau_R$.
Set $\tau_\infty=\lim_{R\rightarrow\infty}\tau_R$. Then $u$ is the unique solution of \eqref{CH} on the interval $[0,\tau_\infty)$.  Further, $\{\bar{u}_R\}_{R\ge 1}$ are $\{\mathcal F_t\}_{t\in[0,T]}$-adapted stochastic processes such that for $\rho\in[q,\infty)$,
\begin{equation}\label{truncated-bound}
\sup_{R\ge1}\mathbb E\Big[\sup_{t}\|\bar{u}_R(t,\cdot)\|_{L^q}^\rho\Big]\le C(T,\rho,q)
\end{equation}
(see the second inequality in P794 of \cite{CC01}).
Based on \eqref{truncated-bound}, 
$\tau_\infty=+\infty$ a.s. (see  \cite[(2.36)]{CC01}), and thus under Assumption \ref{A2},
Eq.\ \eqref{CH} admits a global solution, i.e.,
\begin{align*}
u(t,x)&=\GG_tu_0(x)+\int_0^t\int_\OO \Delta G_{t-s}(x,y)f(u(s,y))\ud y\ud s\\
&\quad+\int_0^t\int_\OO G_{t-s}(x,y)\sigma(u(s,y))W(\ud s,\ud y),\quad (t,x)\in[0,T]\times\OO.
\end{align*}
It follows from Fatou's lemma and \eqref{truncated-bound} that for $\rho\in[q,\infty)$,
\begin{equation}\label{eq:truncated-bound}
\mathbb E\Big[\sup_{t}\|u(t,\cdot)\|_{L^q}^\rho\Big]\le\liminf_{R\rightarrow\infty}\mathbb E\Big[\mathbf 1_{\{T\le \tau_R\}}\sup_{t}\|\bar{u}_R(t,\cdot)\|_{L^q}^\rho\Big]\le C(T,\rho,q),
\end{equation}
which also holds for any $\rho\ge1$ and $q\ge1$ in view of the H\"older inequality and the continuity of $u_0$.
Besides, under Assumption \ref{A1}, a standard Picard approximation argument shows that Eq.\ \eqref{CH} admits a unique solution satisfying \eqref{eq:truncated-bound}.

Similar to \cite[Lemma 1.8]{CC01}, we have the following regularity of $G$.
\begin{lem}\label{Greg}
For $\alpha\in(0,1)$, there exists $C=C_\alpha$ such that for $x,y\in\OO$ and $t>s$,
\begin{gather*}
\int_0^t\int_{\OO}|G_{t-r}(x,z)-G_{t-r}(y,z)|^2\ud z\ud r\le C|x-y|^2,\\
\int_0^s\int_{\OO}|G_{t-r}(x,z)-G_{s-r}(x,z)|^2\ud z\ud r+\int_s^t\int_{\OO}|G_{t-r}(x,z)|^2\ud z\ud r\le C|t-s|^{\frac{3}{4}\alpha}.
\end{gather*}
\end{lem}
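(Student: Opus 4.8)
The plan is to reduce the three spatial/temporal increment estimates to the pointwise bounds \eqref{Gtxy}–\eqref{DGtxy} on $G_t(x,y)$, exactly as in \cite[Lemma 1.8]{CC01}. For the first inequality, I would write the difference as $G_{t-r}(x,z)-G_{t-r}(y,z)=\int_y^x \partial_w G_{t-r}(w,z)\,\ud w$ — or, more robustly, interpolate between the trivial bound $\int_0^t\int_\OO |G_{t-r}(x,z)-G_{t-r}(y,z)|^2\,\ud z\,\ud r\le C$ (from $\int_0^t\int_\OO G_{t-r}(x,z)^2\,\ud z\,\ud r\le C$, which follows from \eqref{Gtxy} after the substitution $z\mapsto (x-z)/(t-r)^{1/4}$ and the fact $\int_0^T (t-r)^{-1/2}\,\ud r<\infty$) and the bound obtained by controlling $\partial_x G$. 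Since the excerpt only records estimates on $G$ and $\Delta G$, I expect the cleaner route is to use the spectral representation $G_t(x,y)=\sum_j e^{-\lambda_j^2 t}\phi_j(x)\phi_j(y)$ with $\lambda_j=-j^2$, $\phi_j=\sqrt{2/\pi}\sin(jx)$, giving by Parseval
\[
\int_0^t\int_\OO |G_{t-r}(x,z)-G_{t-r}(y,z)|^2\,\ud z\,\ud r=\frac{2}{\pi}\sum_{j\ge1}\frac{1-e^{-2j^4 t}}{2j^4}\,|\sin(jx)-\sin(jy)|^2.
\]
Using $|\sin(jx)-\sin(jy)|\le \min(2, j|x-y|)$ and $\sum_j \min(1,j^2|x-y|^2)/j^4\le C|x-y|^2$ (split at $j\sim|x-y|^{-1}$) yields the claim.

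For the second inequality, the same Parseval computation gives
\[
\int_0^s\int_\OO |G_{t-r}(x,z)-G_{s-r}(x,z)|^2\,\ud z\,\ud r+\int_s^t\int_\OO |G_{t-r}(x,z)|^2\,\ud z\,\ud r
=\frac{2}{\pi}\sum_{j\ge1}\sin^2(jx)\,\frac{(1-e^{-j^4(t-s)})^2(1-e^{-2j^4 s})+(1-e^{-2j^4(t-s)})}{2j^4}.
\]
Bounding $\sin^2(jx)\le 1$, dropping the $(1-e^{-2j^4 s})$ and $(1-e^{-2j^4(t-s)})\le 1$ factors, and using $(1-e^{-j^4(t-s)})\le C_\alpha (j^4(t-s))^{\frac{3}{4}\alpha}$ together with $1-e^{-2j^4(t-s)}\le C(j^4(t-s))^{\frac34\alpha}$ (both valid since $1-e^{-x}\le x^\beta$ for $x\ge0$, $\beta\in(0,1]$, up to a constant), reduces everything to
\[
\sum_{j\ge1}\frac{(j^4(t-s))^{\frac34\alpha}}{j^4}=(t-s)^{\frac34\alpha}\sum_{j\ge1} j^{3\alpha-4}\le C_\alpha (t-s)^{\frac34\alpha},
\]
which converges precisely because $3\alpha-4<-1$ for $\alpha\in(0,1)$. (Equivalently, one may use the kernel bounds \eqref{Gtxy}–\eqref{DGtxy}: write $G_{t-r}-G_{s-r}=\int_{s-r}^{t-r}\partial_\tau G_\tau\,\ud\tau$, note $\partial_\tau G_\tau=-\Delta^2 G_\tau$ and interpolate between $\|G\|$- and $\|\Delta G\|$-type bounds to gain the $\frac34\alpha$ exponent; the spectral argument is shorter.)

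The main obstacle is purely bookkeeping: getting the exponent $\frac34\alpha$ — rather than $\frac34$ or $\alpha$ — with the correct constant, and checking the borderline summability $\sum_j j^{3\alpha-4}<\infty$ fails exactly at $\alpha=1$, which is why the statement is restricted to $\alpha\in(0,1)$. One must be careful that the temporal H\"older exponent $\frac34$ of the semigroup (reflecting that $\Delta^2$ is fourth order, so $t$ scales like (length)$^4$ and $t^{1/4}$ like a length, i.e. $t^{3/4}$ like (length)$^3$, matching the $\Delta G\sim t^{-3/4}$ singularity) gets multiplied by $\alpha$ when one only asks for H\"older-$\alpha$ regularity; the factor $\frac34$ itself comes from $\int_0^\delta \tau^{-1/2}\,\ud\tau\sim\delta^{1/2}$ combined with one more factor from the $\Delta G$ estimate, or directly from the spectral sum as above. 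Once the exponent is pinned down the rest is a routine splitting of the series at the scale $j\sim(t-s)^{-1/4}$ (or $j\sim|x-y|^{-1}$ in the first estimate).
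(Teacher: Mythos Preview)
Your proposal is correct and follows essentially the same route the paper takes: the paper defers to \cite[Lemma 1.8]{CC01}, and the spectral/Parseval computation you carry out is exactly the argument reproduced for the discrete kernel in Lemma~\ref{Gn-regularity} (orthogonality of $\{\phi_j\}$, the bound $|\phi_j(x)-\phi_j(y)|\le j|x-y|$, and \eqref{1-ex} with exponent $2\gamma=\tfrac34\alpha<\tfrac34$). A minor simplification: for the spatial increment no splitting at $j\sim|x-y|^{-1}$ is needed, since $|\sin(jx)-\sin(jy)|\le j|x-y|$ alone already gives $\sum_j j^{-2}|x-y|^2<\infty$; likewise the temporal estimate goes through directly without splitting the sum.
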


Based on \eqref{eq:truncated-bound} and Lemma \ref{Greg}, we present the H\"older continuity of the exact solution. Under either Assumption \ref{A1} or Assumption \ref{A2}, there exists some constant $K_0$ such that 
\begin{align}\label{K0}
|f(x)|\le K_0(1+|x|^3).
\end{align}

\begin{lem}\label{Holder-exact}
Let Assumption \ref{A1} or \ref{A2} hold, $u_0\in\mathcal C^2(\mathcal O)$, and $\alpha\in(0,1)$. Then for $p\ge1$, there exists some constant $C=C(\alpha,p,T,K_0)$ such that
\begin{align}\label{eq:Holde-e}
\|u(t,x)-u(s,y)\|_p\le C(|t-s|^{\frac{3\alpha}{8}}+|x-y|),\quad\forall~(t,x),(s,y)\in[0,T]\times\OO.
\end{align}
\end{lem}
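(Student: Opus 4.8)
The plan is to estimate the increments of $u$ directly from the mild formulation recorded above, writing $u(t,x)=\GG_t u_0(x)+I_1(t,x)+I_2(t,x)$, where $I_1(t,x):=\int_0^t\int_\OO\Delta G_{t-r}(x,z)f(u(r,z))\,\ud z\,\ud r$ and $I_2(t,x):=\int_0^t\int_\OO G_{t-r}(x,z)\sigma(u(r,z))\,W(\ud r,\ud z)$. It suffices to bound separately the temporal increment (the case $x=y$, $s<t$) and the spatial increment (the case $t=s$) of each of the three pieces, since then $\|u(t,x)-u(s,y)\|_p\le\|u(t,x)-u(s,x)\|_p+\|u(s,x)-u(s,y)\|_p$. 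As $\|\cdot\|_p\le\|\cdot\|_2$ for $p\in[1,2)$, we may and do assume $p\ge 2$.

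\emph{Stochastic term.} Because $\sigma$ is bounded, the Burkholder--Davis--Gundy inequality together with Minkowski's integral inequality bounds $\|I_2(t,x)-I_2(s,y)\|_p^2$ by $C\|\sigma\|_\infty^2$ times the squared $L^2([0,T]\times\OO)$-norm of the corresponding increment of the kernel $G$. The resulting kernel quantities --- $\int_0^t\int_\OO|G_{t-r}(x,z)-G_{t-r}(y,z)|^2\,\ud z\,\ud r$ for the spatial increment, and $\int_0^s\int_\OO|G_{t-r}(x,z)-G_{s-r}(x,z)|^2\,\ud z\,\ud r+\int_s^t\int_\OO|G_{t-r}(x,z)|^2\,\ud z\,\ud r$ for the temporal one --- are exactly those controlled by Lemma \ref{Greg}, by $C|x-y|^2$ and $C|t-s|^{3\alpha/4}$ respectively. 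Hence $I_2$ contributes the spatial rate $|x-y|$ and the temporal rate $|t-s|^{3\alpha/8}$; the latter is the term that determines the temporal exponent in \eqref{eq:Holde-e}, and the restriction $\alpha<1$ is inherited from Lemma \ref{Greg}.

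\emph{Drift term.} Here I would move $\|\cdot\|_p$ inside by Minkowski's inequality and apply Hölder's inequality in the space variable, $\int_\OO|\Delta G_\tau(x,z)|\,|f(u(r,z))|\,\ud z\le\|\Delta G_\tau(x,\cdot)\|_{L^{q'}}\,\|f(u(r,\cdot))\|_{L^q}$ with $q$ large. The purpose of the spatial Hölder step is that \eqref{K0} together with the uniform moment bound \eqref{eq:truncated-bound} (valid for every integrability exponent) gives $\sup_r\|f(u(r,\cdot))\|_{L^p(\Omega;L^q)}\le C$ at once, so the cubic growth of $f$ under Assumption \ref{A2} causes no difficulty and no spatial bootstrap is needed. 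From \eqref{DGtxy} one gets $\|\Delta G_\tau(x,\cdot)\|_{L^{q'}}\le C\tau^{-3/4+1/(4q')}$, which is time-integrable on $[0,t]$; the spatial and temporal increments of $\Delta G$ in $L^{q'}(\OO)$ are then obtained by interpolating between \eqref{DGtxy} and higher-order pointwise bounds of the type in \cite[Lemma 1.2]{CC01} --- in particular $|\nabla_x\Delta G_\tau(x,z)|\le C\tau^{-1}\exp(-c|x-z|^{4/3}/\tau^{1/3})$ together with the identity $\partial_\tau\Delta G_\tau=-\Delta^3 G_\tau$ --- and splitting the time integral at the scale $\tau\sim|x-y|^4$, respectively $\tau\sim t-s$. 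Using $|x-y|\le\pi$, this yields for $I_1$ the spatial rate $|x-y|$ and a temporal rate that can be made arbitrarily close to $\tfrac12$, both dominated by the stochastic contribution.

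\emph{Initial-data term; main obstacle.} For $\GG_t u_0$, parabolic smoothing together with $u_0\in\C^2(\OO)$ (and the compatibility/Dirichlet condition on $u_0$) gives $\GG_\cdot u_0\in\C([0,T];\C^1(\OO))$, whence $|\GG_t u_0(x)-\GG_t u_0(y)|\le C|x-y|$; for the temporal increment one writes $\GG_t u_0(x)-\GG_s u_0(x)=-\int_s^t\int_\OO\Delta_x G_r(x,z)\,\Delta u_0(z)\,\ud z\,\ud r$ and uses \eqref{DGtxy} and $\|\Delta u_0\|_\infty<\infty$ to get $\le C(t^{1/2}-s^{1/2})\le C|t-s|^{1/2}$. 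Summing the three contributions gives \eqref{eq:Holde-e}. I expect the main obstacle to be the drift term $I_1$: one must simultaneously absorb the cubic growth of $f$ under Assumption \ref{A2} --- which the spatial Hölder inequality against \eqref{eq:truncated-bound} resolves --- and control the increments of the kernel $\Delta G$, which are only barely time-integrable, so their Hölder exponents have to be extracted through the interpolation-and-splitting argument above rather than by a direct estimate.
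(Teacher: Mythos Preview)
Your proposal is correct and follows the same three-piece decomposition as the paper; the stochastic convolution is handled identically via the Burkholder inequality and Lemma~\ref{Greg}, and the treatment of the initial-data term is essentially the same (the paper simply invokes \cite[Lemma 2.3]{CC01}, which encodes the integration-by-parts argument you sketch). The genuine difference is in the drift term $I_1$. The paper first establishes the pointwise moment bound $\sup_{t,x}\|u(t,x)\|_p\le C$ (labelled \eqref{eq:bound-e} in the proof), via the $L^1\!\to\! L^\infty$ smoothing estimate \eqref{AKM} for $\mathbb J_0$ applied to $f(u)$ together with the a~priori $L^q$ bound \eqref{eq:truncated-bound}; with $\sup_{t,x}\|f(u(t,x))\|_p$ thus controlled, it pairs this with $L^1_{t,z}$ bounds on the increments of $\Delta G$ obtained by eigenfunction expansion, Parseval, and the elementary inequalities $|\phi_j(x)-\phi_j(y)|\le j|x-y|$ and $1-e^{-x}\le C x^{\alpha}$ (see \eqref{DeltaGG}, \eqref{eq.Green}, \eqref{eq.Green1}). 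You instead apply H\"older in $z$ to pair $\|\Delta G_\tau(x,\cdot)\|_{L^{q'}}$ with $\|f(u(r,\cdot))\|_{L^p(\Omega;L^q)}$, which is bounded directly from \eqref{eq:truncated-bound} and so bypasses the preliminary pointwise step, and then estimate the kernel increments in $L^{q'}_z$ via pointwise Gaussian-type bounds from \cite[Lemma 1.2]{CC01} plus interpolation and splitting at the parabolic scale. Your route is more in the heat-kernel spirit and would transfer to settings without an explicit spectral basis; the paper's route is more self-contained---it uses only \eqref{Gtxy}--\eqref{DGtxy} and the eigenbasis, not the higher-order pointwise bounds on $\nabla_x\Delta G$ or $\partial_t\Delta G$ that your interpolation step invokes---and yields as a by-product the pointwise bound \eqref{eq:bound-e}, which is reused downstream.
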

\begin{proof}
We first prove \begin{align}\label{eq:bound-e}
\sup_{t,x}\|u(t,x)\|_p\le C(p,T).
\end{align}
To this end, we write $u(t,x)=\sum_{i=1}^3u_i(t,x)$ with 
\begin{gather}\label{eq.u1}
u_1(t,x):=\GG_tu_0(x),\\\label{eq.u2}
u_2(t,x):=\int_0^t\int_{\mathcal O}\Delta G_{t-r}(x,z)f(u(r,z))\ud z\ud r,\\\label{eq.u3}
u_3(t,x):=\int_0^t\int_{\mathcal O}G_{t-r}(x,z)\sigma(u(r,z))W(\ud r,\ud z).
\end{gather}
Let us state a useful property in \cite[Lemma 1.6]{CC01}. 
For any $\rho\in[1,\infty]$, $q\in[\rho,+\infty]$, and $1/\gamma=1/q-1/\rho+1\in[0,1]$,
the linear operator $\mathbb J_{t_0}$ defined by
$$\mathbb J_{t_0}(v)(t,x)=\int_{t_0}^t\int_{\mathcal O}\Delta G_{t-s}(x,y)v(s,y)\ud y\ud s,\quad 0\le t_0<t\le T,~x\in\OO,\quad v\in L^1(t_0,T;L^\rho(\mathcal O))$$
 is a mapping from $L^1(t_0,T;L^\rho(\mathcal O))$ to $L^\infty(t_0,T;L^q(\mathcal O))$ with
\begin{align}\label{AKM}
\|\mathbb J_{t_0}(v)(t,\cdot)\|_{L^q}\le C\int_{t_0}^t(t-s)^{-\frac{3}{4}+\frac{1}{4\gamma}}\|v(s,\cdot)\|_{L^\rho}\ud s.
\end{align}
It follows from \eqref{eq:truncated-bound} and \eqref{K0} that 
\begin{align}\label{f-bound}
\mathbb E[\|f(u(r,\cdot))\|^p_{L^1}]\le C\big(1+\mathbb E[\|u(r,\cdot)\|^{3p}_{L^3}]\big)\le C(p,T,K_0),\quad\forall~r\in[0,T].
\end{align}
Applying \eqref{AKM} with $t_0=0$, $q=\gamma=\infty$ and $\rho=1$ leads to
\begin{align*}
\left\|u_2(t,\cdot)\right\|_{L^\infty}\le C\int_0^t(t-r)^{-\frac{3}{4}}\|f(u(r,\cdot))\|_{L^{1}}\ud r,
\end{align*}
which combined with the Minkowski inequality and \eqref{f-bound} implies
\begin{align}\label{u2p}
\sup_{x}\left\|u_2(t,x)\right\|_{p}\le C\int_0^t(t-r)^{-\frac{3}{4}}\left(\mathbb E[\|f(u(r,\cdot))\|^p_{L^1}]\right)^{\frac{1}{p}}\ud r\le C(p,T,K_0)t^{\frac{1}{4}}.
\end{align}
Since $\sigma$ is bounded, the Burkholder inequality \cite[Theorem B.1]{KD14} and \eqref{Gtxy} yield
\begin{align}\label{u3p}
\left\|u_3(t,x)\right\|^2_{p}\le C(p)\int_0^t\int_{\mathcal O}G^2_{t-r}(x,z)\ud z\ud r\le C(p)t^{\frac{3}{4}},
\end{align}
for $(t,x)\in[0,T]\times\OO$.
In addition, \eqref{Gtxy} implies $|u_1(t,x)|\le C\|u_0\|_{\C(\OO)}$ for $x\in\OO$, which together with \eqref{u2p} and \eqref{u3p} completes the proof of \eqref{eq:bound-e}.

Without loss of generality, assume that $s<t$.
Notice that $u(t,x)-u(t,y)=\GG_tu_0(x)-\GG_tu_0(y)+I_f+I_\sigma$ with
\begin{align*}
I_f:&=\int_0^t\int_{\mathcal O}\left[\Delta G_{t-r}(x,z)-\Delta G_{t-r}(y,z)\right]f(u(r,z))\ud z\ud r,\\
I_\sigma:&=\int_0^t\int_{\mathcal O} \left[G_{t-r}(x,z)- G_{t-r}(y,z)\right]\sigma(u(r,z))W(\ud r,\ud z),
\end{align*}
and $u(t,x)-u(s,x)=\GG_tu_0(x)-\GG_su_0(x)+J^-_f+J_f+J_\sigma+J^-_\sigma$ with
\begin{align*}
J^-_f:&=\int_0^s\int_{\mathcal O}\left[\Delta G_{t-r}(x,z)-\Delta G_{s-r}(x,z)\right]f(u(r,z))\ud z\ud r,\\
J^-_\sigma:&=\int_0^s\int_{\mathcal O} \left[G_{t-r}(x,z)- G_{s-r}(x,z)\right]\sigma(u(r,z))W(\ud r,\ud z),\\
J_f:&=\int_s^t\int_{\mathcal O}\Delta G_{t-r}(x,z)f(u(r,z))\ud z\ud r,~\\
J_\sigma:&=\int_s^t\int_{\mathcal O} G_{t-r}(x,z)\sigma(u(r,z))W(\ud r,\ud z),
\end{align*}
where the explicit dependence of $I_f$, $I_\sigma$, $J^-_f$, $J_f$, $J_\sigma$, $J^-_\sigma$ on $t,s,x,y$ is dropped for simplicity.
Using \cite[Lemma 2.3]{CC01}
and the assumption $u_0\in\mathcal C^{2}(\OO)$, we get
$|\GG_tu_0(x)-\GG_tu_0(y)|+|\GG_tu_0(x)-\GG_su_0(x)|\le C(|t-s|^{\frac{1}{2}}+|x-y|).$
By the Burkholder inequality, the boundedness of $\sigma$, and Lemma \ref{Greg}, we obtain
$\|I_\sigma\|_p^2+\|J_\sigma^-\|_p^2+\|J_\sigma\|_p^2\le C(|t-s|^{\frac{3}{4}\alpha}+|x-y|^2).$
For any $\alpha_1>0$,
\begin{align}\label{ex<x}
e^{-x}\le C_{\alpha_1}x^{-\alpha_1},\quad\forall~ x >0.
\end{align}
By the orthogonality of $\{\phi_j\}_{j=1}^\infty$ in $L^2(\OO)$, \eqref{ex<x}, and $|\phi_j(x)-\phi_j(y)|\le j|x-y|$ for $j\ge1,$ 
\begin{align*}
\int_\OO|\Delta G_{s}(x,z)-\Delta G_{s}(y,z)|^2\ud z= \sum_{j=1}^\infty j^4e^{-2j^4s}|\phi_j(x)-\phi_j(y)|^2\le C |x-y|^2\sum_{j=1}^\infty j^{6-4\rho}s^{-\rho}
\end{align*}
with $\rho>0$. Choosing $\rho\in(\frac{7}{4},2)$ and using the Cauchy--Schwarz inequality,
\begin{align}\label{DeltaGG}\notag
&\quad\int_0^t\int_\OO|\Delta G_{s}(x,z)-\Delta G_{s}(y,z)|\ud z\ud s\le
 \sqrt{\pi}\int_0^t\left(\int_\OO|\Delta G_{s}(x,z)-\Delta G_{s}(y,z)|^2\ud z\right)^{\frac12}\ud s\\
 &\le C|x-y| \Big(\sum_{j=1}^\infty j^{6-4\rho}\Big)^{\frac{1}{2}}\int_0^ts^{-\frac{\rho}{2}}\ud s\le C|x-y|.
\end{align}
 Taking advantage of \eqref{DeltaGG} and \eqref{eq:bound-e}, we obtain
\begin{align*}
\|I_f\|_p&\le C |x-y|\sup_{t,x}\|f(u(t,x))\|_{p}\le C |x-y|(1+\sup_{t,x}\|u(t,x)\|^3_{3p})\le C|x-y|.
\end{align*}
Similarly, the orthogonality of $\{\phi_j\}_{j=1}^\infty$ in $L^2(\OO)$, the Cauchy--Schwarz inequality,  and \eqref{ex<x} yield that for $\rho>\frac{5}{4}$,
\begin{align}\label{eq.Green}\notag
&\quad\int_s^t\int_{\mathcal O}|\Delta G_{t-r}(x,z)|\ud z\ud r\le \sqrt{\pi}\int_s^t\left(\int_{\mathcal O}|\Delta G_{t-r}(x,z)|^2\ud z\right)^{\frac12}\ud r\\
&\le  C\int_s^t\Big(\sum_{j=1}^\infty j^4e^{-2j^4(t-r)}\Big)^{\frac12}\ud r\le C\int_s^t\Big(\sum_{j=1}^\infty j^{4-4\rho}(t-r)^{-\rho}\Big)^{\frac12}\ud r\le C(t-s)^{1-\frac{\rho}{2}},
\end{align}
which together with \eqref{eq:bound-e} implies that for $\alpha\in(0,1)$,
\begin{align*}
\|J_f\|_p\le \int_s^t\int_{\mathcal O}|\Delta G_{t-r}(x,z)| (1+\|u(r,z)\|^3_{3p})\ud z\ud r\le C(t-s)^{\frac{3}{8}\alpha}.
\end{align*}
Observe that for any $\alpha_2\in(0,1]$, 
\begin{align}\label{1-ex}
1-e^{-x}\le C_{\alpha_2}x^{\alpha_2},\quad x\ge 0.
\end{align}
Then the orthogonality of $\{\phi_j\}_{j=1}^\infty$ in $L^2(\OO)$, the Cauchy--Schwarz inequality, \eqref{1-ex} and \eqref{ex<x} imply that 
for $\alpha_3\in(0,\frac{3}{8})$ and $\rho=\frac{13}{8}+\alpha_3<2$ with $4-4\rho+8\alpha_3<-1$, it holds that
\begin{align}\label{eq.Green1}\notag
&\quad\int_0^s\int_\OO|\Delta G_{t-r}(x,z)-\Delta G_{s-r}(x,z)|\ud z\ud r\\\notag
&\le \sqrt{\pi}\int_0^s\left(\int_\OO|\Delta G_{t-r}(x,z)-\Delta G_{s-r}(x,z)|^2\ud z\right)^\frac{1}{2}\ud r\\\notag
&\le C \int_0^s\Big(\sum_{j=1}^\infty j^4e^{-2j^4(s-r)}|1-e^{-j^4(t-s)}|^2\Big)^{\frac{1}{2}}\ud r\\
&\le C\Big(\sum_{j=1}^\infty j^{4-4\rho+8\alpha_3}\Big)^{\frac{1}{2}}\int_0^s (s-r)^{-\frac{\rho}{2}}\ud r(t-s)^{\alpha_3}\le C(t-s)^{\alpha_3}.
\end{align}
Hence, 
 it follows from \eqref{eq:bound-e} that
$\|J_f^-\|_p\le C(t-s)^{\frac{3}{8}\alpha}$ with $\alpha\in(0,1)$.
Finally, combining the above estimates, we obtain \eqref{eq:Holde-e}, which completes the proof.
\end{proof}

%

\begin{lem}\label{Holder-e}
Under the same conditions of Lemma \ref{Holder-exact}, for any $p\ge1$,
\begin{align*}
\mathbb E\Big[\sup_{t,x}|u(t,x)|^p\Big]&\le C(p,T,K_0).
\end{align*}
\end{lem}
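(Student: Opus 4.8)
The plan is to upgrade the pointwise moment bound \eqref{eq:bound-e} to a bound on the moment of the supremum by a Kolmogorov-type argument, feeding in the H\"older estimate \eqref{eq:Holde-e} of Lemma \ref{Holder-exact} as the required control on the increments. First, it suffices to prove the assertion for all sufficiently large $p$: for general $p\ge1$ and any $p'\ge p$, Jensen's inequality gives $\E[\sup_{t,x}|u(t,x)|^p]\le\big(\E[\sup_{t,x}|u(t,x)|^{p'}]\big)^{p/p'}$, so a bound for large $p'$ transfers to all $p\ge1$. Fix such a large $p$ from now on.

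The next step is to recast \eqref{eq:Holde-e} as an isotropic H\"older bound on the two-dimensional parameter set $[0,T]\times\OO\subset\R^2$. Choose $\alpha\in(0,1)$ close to $1$ and set $\beta:=\tfrac{3\alpha}{8}<1$. Since $\OO=(0,\pi)$ is bounded, $|x-y|\le\pi^{1-\beta}|x-y|^{\beta}$ there; combining this with the subadditivity of $r\mapsto r^{\beta}$ and the equivalence of the $\ell^1$ and $\ell^2$ norms on $\R^2$, \eqref{eq:Holde-e} yields
\begin{align*}
\|u(t,x)-u(s,y)\|_p\le C\big(|t-s|+|x-y|\big)^{\beta}\le C\,\big|(t,x)-(s,y)\big|^{\beta},\qquad(t,x),(s,y)\in[0,T]\times\OO,
\end{align*}
with $C=C(\alpha,p,T,K_0)$. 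I would then apply Kolmogorov's continuity theorem in its quantitative form (via the Garsia--Rodemich--Rumsey inequality): once $p$ is large enough that $\beta p>2$, the field $u$ --- which, as the mild solution, already admits a continuous version, so that $\sup_{t,x}$ is unambiguous --- satisfies $\E\big[[u]_{\kappa}^{p}\big]\le C(p,T,K_0)$ for any $\kappa\in\big(0,\beta-\tfrac2p\big)$, where $[u]_{\kappa}$ denotes the $\kappa$-H\"older seminorm of $u$ on $[0,T]\times\OO$ and the constant is controlled solely through the constant in the displayed increment bound. Since $\sup_{t,x}|u(t,x)|\le|u(0,x_0)|+\mathrm{diam}\big([0,T]\times\OO\big)^{\kappa}[u]_{\kappa}$ for a fixed $x_0\in\OO$, estimating the first term by \eqref{eq:bound-e} gives $\E[\sup_{t,x}|u(t,x)|^p]\le C(p,T,K_0)$ for all large $p$, hence for all $p\ge1$ by the first step.

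I do not anticipate a serious obstacle; the two points requiring care are the anisotropy of \eqref{eq:Holde-e} --- time and space increments enter with different exponents, which I handle by passing to the isotropic bound above at the price of replacing the space exponent $1$ by $\beta<1$, harmless here --- and the restriction to large $p$ inherent in Kolmogorov's theorem, removed by Jensen's inequality. Finally, since no discretization enters, all constants are of the form $C(p,T,K_0)$ and independent of any stepsize, in agreement with the statement.
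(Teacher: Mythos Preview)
Your proposal is correct and follows essentially the same approach as the paper: feed the H\"older increment bound \eqref{eq:Holde-e} into a Kolmogorov/Garsia--Rodemich--Rumsey argument for large $p$, then downgrade to general $p\ge1$ by H\"older's inequality. The only cosmetic difference is that the paper invokes an \emph{anisotropic} version of the continuity theorem (\cite[Theorem~C.6]{KD14}) directly with exponents $H_1=\tfrac14$ in time and $H_2=1$ in space, whereas you first pass to an isotropic bound with common exponent $\beta=\tfrac{3\alpha}{8}$ and then apply the standard two-dimensional GRR inequality; also, the paper anchors at $u(0,0)=0$ (from the DBCs) rather than at a generic $u(0,x_0)$.
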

\begin{proof}
Based on \eqref{eq:Holde-e} with $\alpha=\frac23$,  we apply \cite[Theorem C.6]{KD14} with $H_1=\frac{1}{4}, H_2=1$,$H=5$, $k=p>6$, $q=1-\frac{6}{p}$ and $\delta=\frac{1}{2}(q+1-H/p)$ to obtain that there exists  $C_1$ such that
\begin{align*}
\mathbb E\bigg[\sup_{(t,x)\neq (s,y)}\frac{|u(t,x)-u(s,y)|^p}{(|t-s|^{\frac{1}{4}}+|x-y|)^{p-6}}\bigg]\le C_1.
\end{align*}
Hence, for $p>6$,
\begin{align}\label{uEp-1}
\mathbb E\Big[\sup_{t,x}|u(t,x)|^p\Big]=\mathbb E\Big[\sup_{t,x}|u(t,x)-u(0,0)|^p\Big]&\le C_1\big(T^{\frac{1}{4}}+\pi\big)^{p-6}.
\end{align}
For $1\le p\le 6$, the desired result follows from \eqref{uEp-1} and the H\"older inequality. The proof is completed.
\end{proof}

\section{Finite difference method}\label{S3}
In this section, we introduce the spatial FDM method for Eq.\ \eqref{CH} and
derive its strong convergence rate.
Given a function $w$ on the mesh $\OO^h:=\{0,h,2h,\ldots,\pi\}$,
we define the difference operator $$\delta_h w_i:=\frac{w_{i-1}-2w_i+w_{i+1}}{h^2},\quad\delta_h^2 w_i=\frac{w_{i-2}-4w_{i-1}+6w_i-4w_{i+1}+w_{i+2}}{h^4},$$
for $i\in \Z_n:=\{1,2,\ldots,n-1\}$,
where $w_i:=w(ih)$. The compatibility conditions  $u_0(0)=u_0(\pi)=0$ and $u_0^{\prime\prime}(0)=u_0^{\prime\prime}(\pi)=0$ are direct results of DBCs and the initial condition. One can approximate $u(t,kh)$ via $\{u^n(t,kh)\}_{n\ge2}$, where $u^n(0,kh)=u_0(kh)$ and
\begin{align}\label{unt}\notag
&\quad\ud u^n(t,kh)+\delta_h^2u^n(t,kh)\ud t\\
&=\delta_h f(u^n(t,kh))\ud t+n\pi^{-1}\sigma(u^n(t,kh))\ud(W(t,(k+1)h)-W(t,kh)),
\end{align}
for $t\in[0,T]$ and $k\in\Z_{n}$,
under the boundary conditions $$u^n(t,0)=u^n(t,\pi)=0,\quad
u^n(t,-h)+u^n(t,h)=u^n(t,(n-1)h)+u^n(t,(n+1)h)=0,$$
for $t\in(0,T]$.
For  $k\in\Z_{n}\cup\{0\}$, we use the polygonal interpolation
$$u^n(t,x):=u^n(t,kh)+(n\pi^{-1}x-k)(u^n(t,(k+1)h)-u^n(t,kh)),\quad\forall~x\in[kh,(k+1)h].$$
To solve \eqref{unt}, we introduce $$U(t)=(U_1(t),\ldots,U_{n-1}(t))^\top,\qquad \beta_t=(\beta^1_t,\ldots,\beta^{n-1}_t)^{\top}$$ with
$U_k(t):=u^n(t,kh)$ and $\beta^k_t:=\sqrt{\frac{n}{\pi}}(W(t,(k+1)h)-W(t,kh))$ for $k\in\mathbb Z_n$, where the explicit dependence of $U(t)$ and $\beta_t$ on $n$ is omitted.
Let 
\begin{small}
\begin{align*}
A_n:=\frac{n^2}{\pi^2}\left[\begin{array}{cccccc}-2 & 1 & 0 & \cdots & \cdots & 0 \\1 & -2 & 1 & \ddots & \ddots & \vdots \\0 & 1 & -2 & \ddots & \ddots & \vdots \\\vdots & 0 & \ddots & \ddots & \ddots & 0 \\\vdots & \ddots & \ddots & 1 & -2 & 1 \\0 & \cdots & \cdots & 0 & 1 & -2\end{array}\right].
\end{align*}
\end{small}
Then \eqref{unt} can be rewritten into an $(n-1)$-dimensional SDE
\begin{align}\label{NCH}
\ud U(t)+A_n^2U(t)\ud t= A_nF_n(U(t))\ud t+\sqrt{n/\pi}\Sigma_n(U(t))\ud \beta_t
\end{align}
with the initial condition $U(0)=(u_0(h),\ldots,u_0((n-1)h))^\top$ and the coefficients $$F_n(U(t))=(f(U_1(t)),\ldots,f(U_{n-1}(t)))^\top,~\Sigma_n(U(t))=\textrm{diag}(\sigma(U_1(t)),\ldots,\sigma(U_{n-1}(t))).$$
Under Assumption \ref{A1}, \eqref{NCH} admits a unique strong solution which satisfies
\begin{align}\label{Unt}\notag
U(t)&=\exp(-A_n^2t)U(0)+\int_0^tA_n\exp(-A_n^2(t-s))F_n(U(s))\ud s\\
&\quad+\sqrt{\frac{n}{\pi}}\int_0^t\exp(-A_n^2(t-s))\Sigma_n(U(s))\ud \beta_s,\quad t\in[0,T].
\end{align}
For $j\in\Z_{n}$, $e_j=(e_j(1),\ldots,e_j(n-1))^\top$ given by
\begin{equation}\label{ejk}
e_j(k)
=\sqrt{\pi/n}\phi_j(kh)=\sqrt{2/n}\sin(jkh),\quad k\in\Z_{n},
\end{equation} 
is an eigenvector of $A_n$  associated with the eigenvalue $\lambda_{j,n}=-j^2c_{j,n}$, where $$c_{j,n}:=\sin^2(\frac{j}{2n}\pi)/(\frac{j}{2n}\pi)^2$$ satisfies $\frac{4}{\pi^2}\le c_{j,n}\le 1.$ The vectors $\{e_i\}_{i=1}^{n-1}$  form an orthonormal basis of $\R^{n-1}$. In particular,
\begin{align}\label{orth}
\langle e_i,e_j\rangle=\frac{\pi}{n}\sum_{k=1}^{n-1}\phi_i(kh)\phi_j(kh)=\int_0^\pi\phi_i(\kappa_n(y))\phi_j(\kappa_n(y))\ud y=\delta_{ij},
\end{align} where $\kappa_n(y)=h\lfloor y/h\rfloor$  with $\lfloor\cdot\rfloor$ being the floor function (see e.g., \cite{GI98}).
It is verified that
$1-\frac{\sin a}{a}\le \frac{1}{6}a^2$ for all $a\in[0,\frac{\pi}{2})$, which indicates that for $j\in\Z_n$,
\begin{align}\label{cjn}
0\le 1-c_{j,n}=\Big(1+\sin(\frac{j}{2n}\pi)/(\frac{j}{2n}\pi)\Big)\Big(1-\sin(\frac{j}{2n}\pi)/(\frac{j}{2n}\pi)\Big)\le \frac{\pi^2j^2}{12n^2}.
\end{align}
Introduce the discrete kernel
$$G^n_t(x,y)=\sum_{j=1}^{n-1}\exp(-\lambda_{j,n}^2t)\phi_{j,n}(x)\phi_j(\kappa_n(y)),$$
where $\phi_{j,n}(x)=\phi_{j}(kh)+(n\pi^{-1}x-k)(\phi_{j}((k+1)h)-\phi_{j}(kh))$ for $x\in[kh,(k+1)h]$, $k\in\Z_n\cup\{0\}$.
Define the discrete Dirichlet Laplacian $\Delta_n$ by $\Delta_n w(y)=0$ for $y\in[0,h)$, 
\begin{align}\label{Deltaw}
\Delta_n w(y)=\frac{n^2}{\pi^2}\left(w\big(\kappa_n(y)+\frac{\pi}{n}\big)-2w\big(\kappa_n(y)\big)+w\big(\kappa_n(y)-\frac{\pi}{n}\big)\right),\quad y\in[h,\pi),
\end{align}
where $w:\OO\rightarrow \R$ with $w(0)=w(\pi)=0$.
Since $\Delta_n \phi_j(\kappa_n(y))=\lambda_{j,n}\phi_j(\kappa_n(y))$,
$$\Delta_n G^n_{t}(x,y)=\sum_{j=1}^{n-1}\lambda_{j,n}\exp(-\lambda_{j,n}^2t)\phi_{j,n}(x)\phi_j(\kappa_n(y)).$$
Similar to \cite[Section 2]{GI98}, based on \eqref{Unt}, the diagonalization of the matrix $A_n$, \eqref{ejk} and $u^n(t,kh)=\sum_{j=1}^{n-1}\langle U(t),e_j\rangle e_j(k)$, one  has 
\begin{align}\label{unR}\notag
u^n(t,x)=&\int_{\mathcal O}G^n_t(x,y)u_0(\kappa_n(y))\ud y+\int_0^t\int_{\mathcal O}\Delta_nG^n_{t-s}(x,y)f(u^n(s,\kappa_n(y)))\ud y\ud s\\
&+\int_0^t\int_{\mathcal O}G^n_{t-s}(x,y)\sigma(u^n(s,\kappa_n(y)))W(\ud s,\ud y),\quad(t,x)\in[0,T]\times\OO.
\end{align}

The follow lemma characterizes the error between $G$ and $G^n$. \begin{lem}\label{lem:Gn-G}
 There exists some constant $C=C(T)$ such that for any $x\in\OO$ and $t\in(0,T]$,
\begin{align}\label{DeltaGnG}
\int_0^t\int_{\mathcal O}|\Delta_nG_{s}^n(x,y)-\Delta G_{s}(x,y)|\ud y\ud s&\le Cn^{-1},
\\\label{Gn-G}
\int_0^t\int_{\mathcal O}|G^{n}_s(x,y)-G_s(x,y)|^2\ud y\ud s&\le Cn^{-2}.
\end{align}
\end{lem}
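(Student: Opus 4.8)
The plan is to work from the spectral representations
$\Delta_n G^n_s(x,y)=\sum_{j=1}^{n-1}\lambda_{j,n}e^{-\lambda_{j,n}^2 s}\phi_{j,n}(x)\phi_j(\kappa_n(y))$, $\Delta G_s(x,y)=\sum_{j\ge1}\lambda_j e^{-\lambda_j^2 s}\phi_j(x)\phi_j(y)$, together with the two analogous expansions without the eigenvalue prefactors for $G^n$ and $G$. Subtracting and telescoping the general summand, the difference $\Delta_n G^n_s(x,y)-\Delta G_s(x,y)$ becomes the tail $-\sum_{j\ge n}\lambda_j e^{-\lambda_j^2 s}\phi_j(x)\phi_j(y)$ plus, for $1\le j\le n-1$, four pieces that account respectively for replacing $\lambda_{j,n}$ by $\lambda_j$, $e^{-\lambda_{j,n}^2 s}$ by $e^{-\lambda_j^2 s}$, $\phi_{j,n}(x)$ by $\phi_j(x)$, and $\phi_j(\kappa_n(y))$ by $\phi_j(y)$; I would estimate the $\ud s\,\ud y$-integral of each piece over $[0,t]\times\OO$ separately (and the $L^2$-integral for \eqref{Gn-G}).

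For the tail and the first three pieces I would simply bound the summand by its modulus. The needed ingredients are: $|\lambda_{j,n}-\lambda_j|=j^2(1-c_{j,n})\le\frac{\pi^2}{12}j^4 n^{-2}$ and $|\lambda_{j,n}^2-\lambda_j^2|\le 2j^4(1-c_{j,n})\le\frac{\pi^2}{6}j^6 n^{-2}$, both from \eqref{cjn}; the uniform decay $e^{-\lambda_{j,n}^2 s}\le e^{-cj^4 s}$ with $c=16/\pi^4$, from $\lambda_{j,n}^2=j^4 c_{j,n}^2\ge\frac{16}{\pi^4}j^4$; the elementary bound $|e^{-a}-e^{-b}|\le|a-b|e^{-\min(a,b)}$; the interpolation estimate $|\phi_{j,n}(x)-\phi_j(x)|\le C\min(1,j^2 n^{-2})$ coming from $\|\phi_j''\|_\infty\le Cj^2$; the uniform bounds $|\phi_j(x)|,|\phi_{j,n}(x)|,|\phi_j(\kappa_n(y))|\le C$; and the scalar estimates $\sum_{j\ge1}j^\beta e^{-\gamma j^4 s}\le C_\beta(\gamma s)^{-(\beta+1)/4}$ and $\int_0^t s^\ell e^{-\gamma j^4 s}\,\ud s\le C_\ell(\gamma j^4)^{-(\ell+1)}$. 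Integrating in $y$ costs only a constant factor, and in each of these terms the prefactor $n^{-2}$ (from \eqref{cjn} or from the interpolation error) multiplies a series $\sum_{j=1}^{n-1}(\cdots)$ whose general term is $O(1)$ in $j$ once $s$ has been integrated out, so each contributes $O(n^{-1})$ to \eqref{DeltaGnG}; in fact, for \eqref{Gn-G} these contributions turn out to be $O(n^{-3})$.

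The delicate piece is the fourth one, $\sum_{j=1}^{n-1}\lambda_j e^{-\lambda_j^2 s}\phi_j(x)\bigl(\phi_j(\kappa_n(y))-\phi_j(y)\bigr)$. Here the modulus bound $|\phi_j(\kappa_n(y))-\phi_j(y)|\le C j n^{-1}$, applied term by term, would cost an extra factor $\sum_{j=1}^{n-1}j^{-1}\sim\log n$ and overshoot the target rate. The idea I would use is to recognise this sum as $g_{x,s}(\kappa_n(y))-g_{x,s}(y)$, where $g_{x,s}(z):=\sum_{j=1}^{n-1}\lambda_j e^{-\lambda_j^2 s}\phi_j(x)\phi_j(z)$ is a trigonometric polynomial in $z$; since $\kappa_n(y)$ is the left endpoint of the mesh cell containing $y$ and $|y-\kappa_n(y)|\le\pi/n$, one has $\int_\OO|g_{x,s}(\kappa_n(y))-g_{x,s}(y)|\,\ud y\le\frac{\pi}{n}\|g_{x,s}'\|_{L^1(\OO)}\le\frac{\pi^{3/2}}{n}\|g_{x,s}'\|_{L^2(\OO)}$, and Parseval together with $\langle\phi_i',\phi_j'\rangle_{L^2(\OO)}=j^2\delta_{ij}$ gives $\|g_{x,s}'\|_{L^2(\OO)}^2=\sum_{j=1}^{n-1}j^6 e^{-2j^4 s}\phi_j(x)^2\le C s^{-7/4}$. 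As $\int_0^t s^{-7/8}\,\ud s<\infty$, this piece is also $O(n^{-1})$, which completes \eqref{DeltaGnG}. For \eqref{Gn-G} the same manipulation gives $\|g_{x,s}\circ\kappa_n-g_{x,s}\|_{L^2(\OO)}^2\le(\pi/n)^2\|g_{x,s}'\|_{L^2(\OO)}^2$; moreover, since $\{\phi_j(\kappa_n(\cdot))\}_{j=1}^{n-1}$ is itself orthonormal in $L^2(\OO)$ by \eqref{orth}, the tail and the first three pieces are $O(n^{-3})$ there, so this last piece supplies the dominant contribution $O(n^{-2})$.

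The only step requiring a genuine idea is this last piece: a crude estimate loses a logarithm, and one has to keep the cancellation encoded in $g_{x,s}(\kappa_n(y))-g_{x,s}(y)$ and trade one power of the frequency for one derivative of $g_{x,s}$ before summing. Everything else is careful bookkeeping of the elementary series estimates listed above.
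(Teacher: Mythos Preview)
Your proposal is correct and follows essentially the same strategy as the paper: the same spectral telescoping, and the same key device for the $\phi_j(\kappa_n(y))-\phi_j(y)$ piece, namely bounding $g_{x,s}\circ\kappa_n-g_{x,s}$ by $Cn^{-1}\|\partial_y g_{x,s}\|_{L^2}$ (this is precisely the paper's inequality \eqref{eq.key}, quoted from \cite{GI98}). The paper groups your three ``easy'' replacements into a single term $M_2$ and estimates all pieces in $L^2_y$ via orthogonality of $\{\phi_j\}$ before applying Cauchy--Schwarz in $y$, whereas you bound them term by term directly in $L^1_y$; this is only an organizational difference and yields the same orders.
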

\begin{proof}
For $s\in[0,T]$ and $x,y\in\OO$, denote $M_1^{s,x,y}=\sum_{j=1}^{n-1}\lambda_{j,n}e^{-\lambda_{j,n}^2s}\phi_{j,n}(x)\left(\phi_j(\kappa_n(y))-\phi_j(y)\right)$, $M_2^{s,x,y}=\sum_{j=1}^{n-1}(\lambda_{j,n}e^{-\lambda_{j,n}^2s}\phi_{j,n}(x)-\lambda_je^{-\lambda_{j}^2s}\phi_{j}(x))\phi_j(y)$ and $M_3^{s,x,y}=\sum_{j=n}^{\infty}-\lambda_je^{-\lambda_{j}^2s}\phi_{j}(x)\phi_j(y)$.
Since $\{\phi_j\}_{j\ge1}$ is an orthonormal basis of $L^2(\OO)$, it holds that 
\begin{align}\label{eq.Delta-Delta}\notag
&\quad\int_\OO|\Delta_nG_{s}^n(x,y)-\Delta G_{s}(x,y)|^2\ud y=
\int_\OO|\sum_{i=1}^3M_i^{s,x,y}|^2\ud y\\
&\le 3\int_\OO|M_1^{s,x,y}|^2\ud y+3\sum_{j=1}^{n-1}\left|\lambda_{j,n}e^{-\lambda_{j,n}^2s}\phi_{j,n}(x)-\lambda_je^{-\lambda_{j}^2s}\phi_{j}(x)\right|^2+3\sum_{j=n}^{\infty}j^4e^{-2j^4 s}.
\end{align}
It follows from the boundedness of $\{\phi_j\}_{j\ge1}$ and \eqref{ex<x} that for $\alpha_1\in(\frac54,2)$,
\begin{align}\label{eq.Delta-Delta1}
\int_0^t\Big(\sum_{j=n}^{\infty}j^4e^{-2j^4 s}\Big)^{\frac12}\ud s\le C\int_0^t\Big(\sum_{j=n}^{\infty}j^{4-4\alpha_1}s^{-\alpha_1}\Big)^{\frac12}\ud s\le C(\alpha_1)n^{\frac{5-4\alpha_1}{2}}.
\end{align}
By \eqref{cjn}, we have 
$|\lambda_j-\lambda_{j,n}|\le Cj^4/n^2,$
and thus,
$\lambda_j^2-\lambda_{j,n}^2=|\lambda_j-\lambda_{j,n}||\lambda_j+\lambda_{j,n}|\le Cj^6/n^2$, which along with \eqref{1-ex} yields $|e^{-\lambda_{j,n}^2s}-e^{-\lambda_{j}^2s}|=e^{-\lambda_{j,n}^2s}|1-e^{-(\lambda_{j}^2-\lambda_{j,n}^2)s}|\le e^{-\lambda_{j,n}^2s}\frac{j^6}{n^2}s$. Besides, it can be verified that $|\phi_{j,n}(x)-\phi_j(x)|\le Cj/n.$ Therefore, for $\rho,\rho_1>0$
\begin{align*}
&\quad\sum_{j=1}^{n-1}\left|\lambda_{j,n}e^{-\lambda_{j,n}^2s}\phi_{j,n}(x)-\lambda_je^{-\lambda_{j}^2s}\phi_{j}(x)\right|^2\\
&\le\sum_{j=1}^{n-1}|\lambda_{j,n}-\lambda_j|^2e^{-2\lambda_{j,n}^2s}+\sum_{j=1}^{n-1}\lambda_j^2|e^{-\lambda_{j,n}^2s}-e^{-\lambda_{j}^2s}|^2+\sum_{j=1}^{n-1}\lambda_j^2e^{-2\lambda_{j}^2s}|\phi_{j,n}(x)-\phi_j(x)|^2\\
&\le Cn^{-4}\sum_{j=1}^{n-1}j^{8-4\rho}s^{-\rho}+Cn^{-4}\sum_{j=1}^{n-1}{j^{16-4\rho_1}}s^{2-\rho_1}+Cn^{-2}\sum_{j=1}^{n-1}j^{6-4\rho}s^{-\rho}.
\end{align*}
Choosing $\rho=\rho_1-2\in(0,2)$, we obtain
\begin{align}\label{eq.Delta-Delta2}
\int_0^t\Big(\sum_{j=1}^{n-1}\left|\lambda_{j,n}e^{-\lambda_{j,n}^2s}\phi_{j,n}(x)-\lambda_je^{-\lambda_{j}^2s}\phi_{j}(x)\right|^2\Big)^{\frac12}\ud s\le C\int_0^t(n^{5-4\rho}s^{-\rho})^{\frac12}\ud s\le Cn^{\frac{5-4\rho}{2}}.
\end{align}
We recall the following inequality in \cite[Lemma 3.2]{GI98}: 
\begin{align}\label{eq.key}
\int_\OO|w(y)-w(\kappa_n(y))|^2\ud y\le Cn^{-2}\int_\OO|\frac{\ud}{\ud y}w(y)|^2\ud y,\quad\text{for}~w\in C^1(\OO).
\end{align}
Introducing $B_n(s,x,y)=\sum_{j=1}^{n-1}\lambda_{j,n}e^{-\lambda_{j,n}^2s}\phi_{j,n}(x)\phi_j(y)$ and making use of \eqref{eq.key}, we obtain 
\begin{align*}
&\quad\int_\OO|M_1^{s,x,y}|^2\ud y= \int_\OO|B_n(s,x,y)-B_n(s,x,\kappa_n(y))|^2\ud y
\le Cn^{-2}\sum_{j=1}^{n-1}j^6e^{-2\lambda_{j,n}^2s}\phi_{j,n}(x)^2.
\end{align*}
Hence, it follows from \eqref{ex<x} that for $\rho\in(\frac74,2)$,
\begin{align}\label{eq.Delta-Delta3}
\int_0^t\Big(\int_\OO|M_1^{s,x,y}|^2\ud y\Big)^{\frac12}\ud s\le C\int_0^t\Big(\frac{1}{n^2}\sum_{j=1}^{n-1}j^{6-4\rho}s^{-\rho}\Big)^{\frac12}\ud s\le Cn^{-1}.
\end{align}
Combining \eqref{eq.Delta-Delta}-\eqref{eq.Delta-Delta2} with \eqref{eq.Delta-Delta3} allows us to deduce
\begin{align*}
\int_0^t\int_{\mathcal O}|\Delta_nG_{s}^n(x,y)-\Delta G_{s}(x,y)|\ud y\ud s\le \sqrt{\pi}\int_0^t\left(\int_{\mathcal O}|\Delta_nG_{s}^n(x,y)-\Delta G_{s}(x,y)|^2\ud y\right)^{\frac12}\ud s\le Cn^{-1},
\end{align*}
which proves \eqref{DeltaGnG}.
It remains to prove \eqref{Gn-G}.
Set $H_n(t,x,y):=\sum_{j=1}^{n-1}\exp(-\lambda_j^2t)\phi_{j}(x)\phi_j(y).$
Then 
$\int_{\mathcal O}|G^{n}_s(x,y)-G_s(x,y)|^2\ud y\le 4\sum_{k=1}^4J_k(s,x),$
where
\begin{gather*}
J_1(s,x):=\sum_{j=n}^{\infty}\exp(-2\lambda_j^2s),\qquad
J_2(s,x):=\int_\OO|H_n(s,x,y)-H_n(s,x,\kappa_n(y))|^2\ud y,\\
J_3(s,x):=\sum_{j=1}^{n-1}|\exp(-\lambda_j^2s)-\exp(-\lambda_{j,n}^2s)|^2,\qquad J_4(s,x):=\sum_{j=1}^{n-1}e^{-2\lambda_{j,n}^2s}|\phi_{j,n}(x)-\phi_j(x)|^2.
\end{gather*}
For the first term, we have $\int_0^t J_1(s,x)\ud s\le C\sum_{j=n}^\infty j^{-4}\le Cn^{-3}$.
For $2<\alpha<3$,
$
J_3(s,x)
\le \sum_{j=1}^{n-1}e^{-2\lambda_{j,n}^2s}(1-e^{-(\lambda_j^2-\lambda_{j,n}^2)s})^2\le C\sum_{j=1}^{n-1}\frac{j^{12}}{n^4}j^{-4\alpha}s^{2-\alpha}\le Cn^{9-4\alpha}s^{2-\alpha},
$
which implies that $\int_0^t J_3(s,x)\ud s\le C_\epsilon n^{-3+\epsilon}$ with arbitrarily small $\epsilon>0$. Since $|\phi_{j,n}(x)-\phi_j(x)|\le Cj/n$, it holds  that
$J_4(s,x)\le C\sum_{j=1}^{n-1}j^{-4\alpha}s^{-\alpha}\frac{j^2}{n^2}\le Cn^{-2}s^{-\alpha}$ for $\frac{3}{4}<\alpha<1$,
and thus $\int_0^t J_4(s,x)\ud s\le Cn^{-2}.$
Using \eqref{eq.key},
we arrive at
\begin{align*}
\int_0^tJ_2(s,x)\ud s
\le Cn^{-2}\int_0^t\int_\OO|\frac{\ud}{\ud y}H_n(s,x,y)|^2\ud y\ud s\le Cn^{-2}\sum_{j=1}^{n-1}\int_0^tj^2\exp(-2j^4s)\ud s\le Cn^{-2}.
\end{align*}
Combining the above estimates completes the proof of \eqref{Gn-G}.
\end{proof}
For $n\ge2$, denote by $\mathbb U(t):=(u(t,h),\ldots,u(t,(n-1)h))^\top$ the exact solution of Eq.\ \eqref{CH} on spatial grid points, where the explicit dependence of $\mathbb U(t)$ on $n$ is omitted. We introduce the following auxiliary process $\{\tilde U(t),t\in[0,T]\}$  by
\begin{align*}
\ud \tilde U(t)+A_n^2\tilde U(t)\ud t= A_nF_n(\mathbb U(t))\ud t+\sqrt{n/\pi}\Sigma_n(\mathbb U(t))\ud \beta_t,\quad t\in(0,T]
\end{align*}
with initial value $\tilde U(0)=U(0)$. Let $\tilde u^n=\{\tilde u^n(t,x),(t,x)\in[0,T]\times\OO\}$  satisfy 
\begin{align}\label{eq.tildeu}\notag
\tilde u^n(t,x)=&\int_{\mathcal O}G^n_t(x,y)u_0(\kappa_n(y))\ud y+\int_0^t\int_{\mathcal O}\Delta_nG^n_{t-s}(x,y)f(u(s,\kappa_n(y)))\ud y\ud s\\
&+\int_0^t\int_{\mathcal O}G^n_{t-s}(x,y)\sigma(u(s,\kappa_n(y)))W(\ud s,\ud y).
\end{align}
Then $\tilde U_k(t)=\tilde u^n(t,kh)$ for $k\in\Z_{n}$ and $t\in[0,T]$. In order to estimate $\|u^n(t,x)-u(t,x)\|_p$, it suffices to estimate $\|\tilde u^n(t,x)-u(t,x)\|_p$ and $\|\tilde u^n(t,x)-u^n(t,x)\|_p$, where the  first term is tackled as follows.

\begin{lem}\label{utilde-uh-1}
Suppose that Assumption \ref{A1} or \ref{A2} holds and  $u_0\in\C^3(\OO)$.
 Then for any $p\ge1$, there exists some constant $C=C(p,T,K_0)$ such that for any $(t,x)\in[0,T]\times\OO$,
\begin{align*}
\|\tilde u^n(t,x)-u(t,x)\|_p\le Cn^{-1}.
\end{align*}
\end{lem}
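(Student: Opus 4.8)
The plan is to estimate the difference $\tilde u^n(t,x)-u(t,x)$ by subtracting the mild formulation of $u$ (with its kernel $G$, $\Delta G$) from the discrete mild formulation \eqref{eq.tildeu} of $\tilde u^n$ (with kernels $G^n$, $\Delta_n G^n$), and then splitting the resulting expression into three pieces corresponding to the initial-data term, the drift term, and the stochastic convolution. Crucially, because $\tilde u^n$ is driven by the \emph{exact} solution $u$ (not by $u^n$), no Gr\"onwall argument is needed here: this is a pure \emph{consistency} estimate, and each of the three pieces should be bounded directly by $Cn^{-1}$ using Lemma \ref{lem:Gn-G}, the moment bound \eqref{eq:bound-e}, and the H\"older regularity of $u$ from Lemma \ref{Holder-exact}.

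More precisely, I would write $\tilde u^n(t,x)-u(t,x)=\sum_{i=1}^{3}D_i(t,x)$ with
\begin{align*}
D_1(t,x)&=\int_{\OO}G^n_t(x,y)u_0(\kappa_n(y))\ud y-\int_{\OO}G_t(x,y)u_0(y)\ud y,\\
D_2(t,x)&=\int_0^t\!\!\int_{\OO}\big[\Delta_n G^n_{t-s}(x,y)f(u(s,\kappa_n(y)))-\Delta G_{t-s}(x,y)f(u(s,y))\big]\ud y\ud s,\\
D_3(t,x)&=\int_0^t\!\!\int_{\OO}\big[G^n_{t-s}(x,y)\sigma(u(s,\kappa_n(y)))-G_{t-s}(x,y)\sigma(u(s,y))\big]W(\ud s,\ud y).
\end{align*}
For $D_1$: note $\int_{\OO}G^n_t(x,y)\,v(\kappa_n(y))\ud y=\GG^n_t v(x)$ acts on $v$ through its grid values, and $\GG^n_t$ reproduces $u_0$ up to the semigroup error; I expect $\|D_1\|_p\le Cn^{-1}$ using $u_0\in\C^3(\OO)$ (so $u_0(\kappa_n(y))-u_0(y)=O(n^{-1})$ uniformly), the uniform boundedness of $\int_{\OO}|G^n_t(x,y)|\ud y$, and a comparison of $G^n_t$ with $G_t$ analogous to the $L^2$ bound \eqref{Gn-G} but in $L^1$ in $y$ (which follows from the eigenfunction expansion plus \eqref{eq.key} and the estimates on $|\lambda_j-\lambda_{j,n}|$, $|\phi_{j,n}-\phi_j|$ already derived in the proof of Lemma \ref{lem:Gn-G}); alternatively one uses the regularity of the deterministic term $\GG_t u_0$ together with a standard FDM estimate for $\Delta^2$. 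For $D_2$: insert an intermediate term and split as
$$D_2=\int_0^t\!\!\int_{\OO}\big[\Delta_n G^n_{t-s}(x,y)-\Delta G_{t-s}(x,y)\big]f(u(s,\kappa_n(y)))\ud y\ud s+\int_0^t\!\!\int_{\OO}\Delta G_{t-s}(x,y)\big[f(u(s,\kappa_n(y)))-f(u(s,y))\big]\ud y\ud s.$$
The first summand is controlled by \eqref{DeltaGnG} together with $\sup_{s,y}\|f(u(s,y))\|_p\le C(1+\sup_{s,y}\|u(s,y)\|_{3p}^3)\le C$, coming from \eqref{K0} and \eqref{eq:bound-e}; the second uses $\int_0^t\!\int_{\OO}|\Delta G_{t-s}(x,y)|\ud y\ud s\le C$ (from \eqref{DGtxy} or \eqref{eq.Green}-type estimates), the Lipschitz-type bound $|f(u(s,\kappa_n(y)))-f(u(s,y))|\le K_0(1+|u(s,\kappa_n(y))|^2+|u(s,y)|^2)|u(s,\kappa_n(y))-u(s,y)|$, H\"older's inequality, \eqref{eq:bound-e}, and the spatial H\"older estimate $\|u(s,\kappa_n(y))-u(s,y)\|_{q}\le C|\kappa_n(y)-y|\le Cn^{-1}$ from Lemma \ref{Holder-exact} (under Assumption \ref{A1} one simply has the global Lipschitz bound instead). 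For $D_3$: apply the Burkholder inequality \cite[Theorem B.1]{KD14}, split analogously into a kernel-difference part and a coefficient-difference part; the kernel part is bounded using \eqref{Gn-G} and the boundedness of $\sigma$, while the coefficient part uses the Lipschitz continuity of $\sigma$, $\int_0^t\!\int_{\OO}G_{t-s}^2(x,y)\ud y\ud s\le Ct^{3/4}$ from \eqref{Gtxy}, and again $\|u(s,\kappa_n(y))-u(s,y)\|_p\le Cn^{-1}$.

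The main obstacle is obtaining the right $L^1$-in-$y$ (respectively $L^2$-in-$y$) control on the kernel differences $\Delta_n G^n_{t-s}-\Delta G_{t-s}$ and $G^n_{t-s}-G_{t-s}$ \emph{with the correct order $n^{-1}$} and integrable-in-time singularity — but this is exactly the content of Lemma \ref{lem:Gn-G}, so in the present proof I only need to feed its conclusions in. The one genuinely new ingredient needed beyond Lemma \ref{lem:Gn-G} is handling $D_1$, i.e.\ the error in the homogeneous part; this requires either an $L^1$ analogue of \eqref{Gn-G} for $G^n_t-G_t$ itself (not its Laplacian), obtained by the same eigenfunction-expansion bookkeeping with the sharper decay from $\exp(-\lambda_j^2 t)$ rather than $j^4\exp(-\lambda_j^2 t)$, plus the $O(n^{-1})$ error from replacing $u_0(y)$ by $u_0(\kappa_n(y))$ — routine given $u_0\in\C^3(\OO)$. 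Assembling $\|D_1\|_p+\|D_2\|_p+\|D_3\|_p\le Cn^{-1}$ then yields the claim.
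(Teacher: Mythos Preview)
Your decomposition into $D_1,D_2,D_3$ and your treatment of $D_2$ and $D_3$ are correct and coincide with the paper's Parts~2 and~3: split each into a kernel-difference term (handled by \eqref{DeltaGnG} resp.\ \eqref{Gn-G}) and a coefficient-difference term (handled by the spatial H\"older estimate $\|u(s,\kappa_n(y))-u(s,y)\|_q\le Cn^{-1}$ from Lemma \ref{Holder-exact}, plus boundedness of $\int_0^t\!\int_\OO|\Delta G_{t-s}|$ and $\int_0^t\!\int_\OO G_{t-s}^2$).

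The gap is in your handling of $D_1$. Both kernel estimates in Lemma \ref{lem:Gn-G} are \emph{time-integrated} bounds, but $D_1$ contains no time integral: you need to control $\int_\OO[G^n_t(x,y)-G_t(x,y)]\,u_0(\kappa_n(y))\ud y$ at a \emph{fixed} $t$, uniformly as $t\downarrow 0$. Your proposed route (``uniform boundedness of $\int_\OO|G^n_t(x,y)|\ud y$'' plus an $L^1$ pointwise-in-$t$ analogue of \eqref{Gn-G}) does not work as stated: the biharmonic discrete Green function is not sign-definite, the available bound $|G^n_t(x,y)|\le C_\epsilon t^{-1/4-\epsilon}$ from \eqref{GnGn} gives only $\int_\OO|G^n_t(x,y)|\ud y\le C_\epsilon t^{-1/4-\epsilon}$, and the individual pieces $J_1,\ldots,J_4$ in the proof of Lemma \ref{lem:Gn-G} all carry inverse powers of $t$ before time integration. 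One can salvage $D_1$ by a direct spectral argument using the decay $|\hat u_0(j)|\le Cj^{-3}$ from $u_0\in\C^3(\OO)$ together with $|e^{-\lambda_{j,n}^2t}-e^{-\lambda_j^2t}|\le Cj^2n^{-2}$ and $|\phi_{j,n}-\phi_j|\le Cjn^{-1}$, but this is not ``routine'' and you would also need to control the discrete-vs.-continuous Fourier coefficients $\hat u_0^{(n)}(j)-\hat u_0(j)$.

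The paper avoids this entirely by a Duhamel trick that \emph{reintroduces a time integral}: writing $u_1(t,x)=u_0(x)-\int_0^t\!\int_\OO\Delta G_r(x,z)\,u_0''(z)\ud z\ud r$ and its discrete counterpart $\tilde u^n_1(t,x)=\tilde u^n(0,x)-\int_0^t\!\int_\OO\Delta_n G^n_r(x,z)\,\Delta_n u_0(z)\ud z\ud r$. The difference then splits into (i) the interpolation error $|\tilde u^n(0,x)-u_0(x)|\le Cn^{-1}$, (ii) a kernel-difference term controlled directly by \eqref{DeltaGnG}, and (iii) a term involving $|u_0''(z)-\Delta_n u_0(z)|\le Cn^{-1}$ (this is where $u_0\in\C^3(\OO)$ is used) multiplied by the integrable quantity $\int_0^t\!\int_\OO|\Delta G_r(x,z)|\ud z\ud r$. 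This is the missing idea in your proposal; once you have it, everything else you wrote goes through exactly as in the paper.
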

\begin{proof}
Recall that $u=u_1+ u_2+u_3$, where $u_i$, $i=1,2,3$, are defined in \eqref{eq.u1}-\eqref{eq.u3}, respectively. Similarly, for $(t,x)\in[0,T]\times\OO$,
we introduce $\tilde u^n_1(t,x):=\int_{\mathcal O}G^n_t(x,y)u_0(\kappa_n(y))\ud y$,
\begin{gather*}
\tilde u^n_2(t,x):=\int_0^t\int_{\mathcal O}\Delta_nG^n_{t-s}(x,y)f(u(s,\kappa_n(y)))\ud y\ud s,\\
\tilde u^n_3(t,x):=\int_0^t\int_{\mathcal O}G^n_{t-s}(x,y)\sigma(u(s,\kappa_n(y)))W(\ud s,\ud y),
\end{gather*}
and divide the proof into three parts.

\textbf{Part 1:} Following the proof of \cite[Lemma 2.3]{CC01}, we use the PDE satisfied by $G$ to write
$u_1(t,x)=u_0(x)-\int_0^t\int_\OO\Delta G_r(x,z)u_0^{\prime\prime}(z)\ud z\ud r.$ 
As a numerical counterpart, 
\begin{align}\label{eq.u1tilde}\notag
&\quad\tilde u^n_1(t,x)-\tilde u^n(0,x)=\int_{\mathcal O}\int_0^t\frac{\partial}{\partial r}G^n_r(x,z)u_0(\kappa_n(z))\ud z\ud r\\&=-\int_0^t\int_\OO \Delta_n^2 G^n_r(x,z) u_0(\kappa_n(z))\ud z\ud r=
-\int_0^t\int_\OO \Delta_n G^n_r(x,z)\Delta_n u_0(z)\ud z\ud r,
\end{align}
where in the last step we have used the fact that $$\int_\OO\Delta_n v(z)w(\kappa_n(z))\ud z=\int_\OO v(\kappa_n(z))\Delta_n w(z)\ud z,$$
for $v,w:\OO\rightarrow \R$ with $v=w=0$ on $\partial \OO$.
Here, $\tilde u^n(0,kh)=u_0(kh)$ for $k\in\Z_n\cup\{0,n\}$, and 
$\tilde u^n(0,x)=u_0(kh)+(n\pi^{-1}x-k)(u_0((k+1)h)-u_0(kh))$ for $x\in[kh,(k+1)h]$, $k\in\Z_n\cup\{0\}$. In particular, when $u_0\in\mathcal C^1(\OO)$, it holds that 
\begin{align}\label{un0Holder}
|\tilde u^n(0,x)-\tilde u^n(0,y)|\le C|x-y|,\quad x,y\in \OO.
\end{align}
By $u_0\in \C^3(\OO)$ and \eqref{Deltaw}, there exist $\theta_1,\theta_2\in(0,1)$ such that for $z\in[h,\pi)$,
$$ |u^{\prime\prime}_0(z)-\Delta_nu_0(z)|=|u^{\prime\prime}_0(z)-\frac{1}{2}u_0^{\prime\prime}(\kappa_n(z)+\theta_1\frac{\pi}{n})-\frac{1}{2}u_0^{\prime\prime}(\kappa_n(z)-\theta_2\frac{\pi}{n})|\le Cn^{-1},$$
and for $z\in[0,h)$, $|u^{\prime\prime}_0(z)-\Delta_nu_0(z)|=|u^{\prime\prime}_0(z)|=|u^{\prime\prime}_0(z)-u^{\prime\prime}_0(0)|\le Cn^{-1}.$
Therefore, using  \eqref{DGtxy} and \eqref{DeltaGnG}, a direct calculation gives 
\begin{align}\label{deltau1}\notag
|\tilde u^n_1(t,x)-u_1(t,x)|&\le \frac{C}{n}+\int_0^t\int_\OO|\Delta_n G^n_r(x,z)-\Delta G_r(x,z)|\ud z\ud r\\
&\quad+\int_0^t\int_\OO|\Delta G_r(x,z)||u^{\prime\prime}_0(z)-\Delta_nu_0(z)|\ud z\ud r\le Cn^{-1}.
\end{align}

\textbf{Part 2:} The error $\tilde u^n_3(t,x)-u_3(t,x)$ is divided into
\begin{align*}
\tilde u^n_3(t,x)-u_3(t,x)=&\int_0^t\int_{\mathcal O}[G_{t-s}^n(x,y)-G_{t-s}(x,y)]\sigma(u(s,\kappa_n(y)))W(\ud s,\ud y)\\
&+\int_0^t\int_{\mathcal O}G_{t-s}(x,y)[\sigma(u(s,\kappa_n(y)))-\sigma(u(s,y))]W(\ud s,\ud y).
\end{align*}
The Burkholder inequality, the boundedness and Lipschitz continuity of $\sigma$, \eqref{Gn-G}, \eqref{Gtxy}, and Lemma \ref{Holder-exact} imply 
\begin{align}\label{deltau3}\notag
&\quad\|\tilde u^n_3(t,x)-u_3(t,x)\|_p^2
\\\notag
&\le\int_0^t\int_{\mathcal O}|G_{t-s}^n(x,y)-G_{t-s}(x,y)|^2\ud y\ud s
+C\int_0^t\int_{\mathcal O}G^2_{t-s}(x,y)\|u(s,\kappa_n(y))-u(s,y)\|_p^2\ud y\ud s\\
&\le Cn^{-2}+C\int_0^t(t-s)^{-\frac{1}{2}}\sup_{y\in\mathcal O}\|u(s,\kappa_n(y))-u(s,y)\|_p^2\ud s\le Cn^{-2}.
\end{align}

\textbf{Part 3:} Notice that
$\|\tilde u^n_2(t,x)-u_2(t,x)\|_p\le I_1+I_2$, where
\begin{align*}
&I_1:=\int_0^t\int_{\mathcal O}|\Delta_nG_{t-s}^n(x,y)-\Delta G_{t-s}(x,y)|\|f(u(s,\kappa_n(y)))\|_p\ud y\ud s,\\
&I_2:=\int_0^t\int_{\mathcal O}|\Delta G_{t-s}(x,y)|\left\|f(u(s,\kappa_n(y)))-f(u(s,y))\right\|_p\ud y\ud s.
\end{align*}
It follows from \eqref{DeltaGnG}, \eqref{K0} and Lemma \ref{Holder-e} that 
\begin{align*}
I_1\le C\int_0^t\int_{\mathcal O}|\Delta_nG_{t-s}^n(x,y)-\Delta G_{t-s}(x,y)|\ud y\ud s\Big(1+\sup_{t,x}\|u(t,x)\|_{3p}^3\Big)\le Cn^{-1}.
\end{align*}
Under Assumption \ref{A1} or \ref{A2}, $|f(a_1)-f(a_2)|\le c_0(1+a_1^2+a_2^2)|a_2-a_1|$. Hence, the H\"older inequality together with Lemmas \ref{Holder-exact} and \ref{Holder-e} yields that for any $(s,x)\in[0,T]\times\OO$,
\begin{align*}
&\quad\|f(u(s,\kappa_n(x)))-f(u(s,x))\|_{p}\\
&\le C\|u(s,\kappa_n(x))-u(s,x)\|_{3p}\big(1+\|u(s,\kappa_n(x))\|^2_{3p}+\|u(s,x)\|^2_{3p}\big)
\le Cn^{-1}.
\end{align*}
Therefore, $I_2\le Cn^{-1}\int_0^t\int_{\mathcal O}|\Delta G_{t-s}(x,y)|\ud y\ud s\le Cn^{-1}$, in view of \eqref{Gtxy}.
%
%
%
%
%
In conclusion,
$\|\tilde u^n_2(t,x)-u_2(t,x)\|_p\le C n^{-1},$
which along with \eqref{deltau1} and \eqref{deltau3} finishes the proof.
\end{proof}

By Lemma \ref{utilde-uh-1}, we present the strong convergence rate of the spatial FDM for Eq.\ \eqref{CH}. We would like to mention that Theorem \ref{eq.strong-trun} also holds for stochastic Cahn--Hilliard equations with NBCs. 

\begin{tho}\label{eq.strong-trun}
Suppose that Assumption \ref{A1} holds and $u_0\in\C^3(\OO)$. Then for every $p\ge1$, there exists some constant $C=C(p,T,K)$ such that for any $(t,x)\in[0,T]
\times\OO$,
$$\|u^n(t,x)-u(t,x)\|_{p}\le Cn^{-1}.$$
\end{tho}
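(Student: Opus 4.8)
The plan is to use the decomposition $\|u^n(t,x)-u(t,x)\|_p\le\|u^n(t,x)-\tilde u^n(t,x)\|_p+\|\tilde u^n(t,x)-u(t,x)\|_p$, where the second term is already bounded by $Cn^{-1}$ by Lemma \ref{utilde-uh-1}. So everything reduces to estimating $\|u^n(t,x)-\tilde u^n(t,x)\|_p$, i.e. the difference between the genuine semi-discrete scheme \eqref{unR} (which feeds its own numerical solution $u^n(s,\kappa_n(y))$ into the nonlinearities) and the auxiliary process \eqref{eq.tildeu} (which feeds in the exact solution $u(s,\kappa_n(y))$). Subtracting the two mild formulations, the initial-data and $u_1$ parts cancel exactly, and we are left with
\begin{align*}
u^n(t,x)-\tilde u^n(t,x)&=\int_0^t\int_{\OO}\Delta_nG^n_{t-s}(x,y)\big[f(u^n(s,\kappa_n(y)))-f(u(s,\kappa_n(y)))\big]\ud y\ud s\\
&\quad+\int_0^t\int_{\OO}G^n_{t-s}(x,y)\big[\sigma(u^n(s,\kappa_n(y)))-\sigma(u(s,\kappa_n(y)))\big]W(\ud s,\ud y).
\end{align*}

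Next I would bound each piece using the global Lipschitz continuity of $f$ (Assumption \ref{A1}) and of $\sigma$. For the drift term, since $|f(u^n)-f(u)|\le K|u^n-u|$ pointwise and $\kappa_n(y)$ only evaluates $u^n-u$ at grid points $kh$, one gets
$$\Big\|\int_0^t\int_\OO\Delta_nG^n_{t-s}(x,y)[\cdots]\ud y\ud s\Big\|_p\le K\int_0^t\int_\OO|\Delta_nG^n_{t-s}(x,y)|\,\sup_{z\in\OO}\|u^n(s,\kappa_n(z))-u(s,\kappa_n(z))\|_p\,\ud y\ud s.$$
Here I need a uniform-in-$n$ bound $\int_0^t\int_\OO|\Delta_nG^n_{t-s}(x,y)|\ud y\ud s\le C(t-s)^{-\beta}$ for some $\beta<1$; this follows from combining \eqref{DeltaGnG} of Lemma \ref{lem:Gn-G} with the estimate \eqref{eq.Green} on $\int_\OO|\Delta G_s(x,z)|\ud z\ud s$ already used in the proof of Lemma \ref{Holder-exact} (so $\beta$ can be taken near $3/4$). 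For the stochastic term, the Burkholder inequality and boundedness properties give
$$\Big\|\int_0^t\int_\OO G^n_{t-s}(x,y)[\cdots]W(\ud s,\ud y)\Big\|_p^2\le C(p)\int_0^t\int_\OO|G^n_{t-s}(x,y)|^2\sup_{z}\|u^n(s,\kappa_n(z))-u(s,\kappa_n(z))\|_p^2\ud y\ud s,$$
and one needs $\int_0^t\int_\OO|G^n_s(x,y)|^2\ud y\ud s\le C(t-s)^{3/4}$ uniformly in $n$, which again follows from \eqref{Gn-G} together with $\int_0^t\int_\OO G_s^2(x,z)\ud z\ud s\le Ct^{3/4}$ (cf. \eqref{u3p}).

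Set $\varphi_n(t):=\sup_{x\in\OO}\|u^n(t,\kappa_n(x))-u(t,\kappa_n(x))\|_p^p$ (or work directly with the $p$th power after raising the above to the $p$th power via Minkowski/Jensen). Combining the two bounds and using the fact that the supremum over $x\in\OO$ of the left-hand side controls $\varphi_n$, we arrive at a singular Gronwall inequality of the form
$$\varphi_n(t)\le Cn^{-p}+C\int_0^t\big((t-s)^{-3p/4}+(t-s)^{-p/8}\big)\varphi_n(s)\ud s,$$
where the $Cn^{-p}$ term comes from replacing $u(s,\kappa_n(z))$ by $u(s,z)$ via Lemma \ref{utilde-uh-1}'s argument — actually more directly: since $\tilde u^n(s,kh)=\tilde U_k(s)$ and the scheme compares at grid points, no such replacement is even needed and the inhomogeneous term is simply absent here, with the full $n^{-1}$ rate entering only through the triangle inequality with Lemma \ref{utilde-uh-1} at the very end. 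Applying the fractional/singular Gronwall lemma (Henry-type, valid for kernels $(t-s)^{-\gamma}$ with $\gamma<1$; here one may need to iterate or first reduce to large $p$ and use Hölder for small $p$) yields $\varphi_n(t)\le Cn^{-p}$, hence $\|u^n(t,x)-\tilde u^n(t,x)\|_p\le Cn^{-1}$ first at grid points and then at all $x$ by the polygonal interpolation together with \eqref{un0Holder}-type Lipschitz bounds on the interpolant. The main obstacle is the singular Gronwall argument: the kernel exponent $3/4$ (from the drift term with $\Delta_nG^n$) is below $1$ so the iteration converges, but one must be careful that the constants are uniform in $n$, which is exactly what the two estimates in Lemma \ref{lem:Gn-G} guarantee; a secondary technical point is handling $1\le p\le$ (threshold) by first proving the result for large $p$ and then using the Hölder inequality, as done in Lemma \ref{Holder-e}.
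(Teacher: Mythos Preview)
Your overall plan matches the paper's: split via the auxiliary process $\tilde u^n$, subtract the mild formulations \eqref{unR} and \eqref{eq.tildeu}, use the Lipschitz continuity of $f$ and $\sigma$, and close by a singular Gronwall argument on $\sup_x\|u^n(t,x)-u(t,x)\|_p$. One minor difference: the paper does not route the discrete kernel bounds through Lemma~\ref{lem:Gn-G} and the continuous Green function; it derives the pointwise estimates $|G^n_t(x,y)|\le C_\epsilon t^{-1/4-\epsilon}$ and $|\Delta_nG^n_t(x,y)|\le C_\epsilon t^{-3/4-\epsilon}$ directly from the eigenexpansion and \eqref{ex<x} (see \eqref{GnGn}). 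Your indirect route would also work, just less cleanly. Also, the paper applies the Gronwall argument to $e^n=u^n-u$ itself, with the $Cn^{-2}$ inhomogeneous term coming straight from Lemma~\ref{utilde-uh-1}, rather than first bounding $u^n-\tilde u^n$ and then reinserting $\tilde u^n-u$; and since \eqref{unR} is written for all $x\in\OO$ (via $\phi_{j,n}$), no separate interpolation step from grid points is needed.

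The genuine gap is in your Gronwall inequality. Working with $\varphi_n(t)=\sup_x\|e^n(t,x)\|_p^p$ and writing a kernel $(t-s)^{-3p/4}$ is fatal: for any $p>4/3$ this exponent exceeds $1$ and no singular Gronwall lemma applies (and ``reduce to large $p$'' makes it worse, not better). The exponents must not depend on $p$. The paper's fix is to work with the \emph{square} $\|e^n(t,x)\|_p^2$ for every $p\ge2$: apply Cauchy--Schwarz to the drift term with respect to the finite measure $|\Delta_nG^n_{t-s}(x,y)|\,\ud y\,\ud s$, so that the drift contributes $\int_0^t\int_\OO|\Delta_nG^n_{t-s}|\,\|e^n(s,\kappa_n(y))\|_p^2\,\ud y\,\ud s$, while the Burkholder estimate for the stochastic term is already quadratic. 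Both then sit under the common kernel $(t-s)^{-3/4-\epsilon}$, and \cite[Lemma~3.4]{GI98} closes the loop. If you insist on $p$th powers, Jensen's inequality against the normalized measure $|\Delta_nG^n|/\int|\Delta_nG^n|$ also gives the correct kernel $(t-s)^{-3/4-\epsilon}$ (not $(t-s)^{-3p/4}$): the power goes onto $\|e^n\|_p$, not onto the kernel.
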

\begin{proof}
Denote $e^n(t,x):=u^n(t,x)-u(t,x)$.
In view of \eqref{unR} and \eqref{eq.tildeu},
\begin{align*}
u^n(t,x)-\tilde u^n(t,x)&=\int_0^t\int_{\mathcal O}\Delta_nG^n_{t-s}(x,y)\big[f(u^n(s,\kappa_n(y)))-f(u(s,\kappa_n(y)))\big]\ud y\ud s\\
&\quad+\int_0^t\int_{\mathcal O}G^n_{t-s}(x,y)\big[\sigma(u^n(s,\kappa_n(y)))-\sigma(u(s,\kappa_n(y)))\big]W(\ud s,\ud y).
\end{align*}
By the expressions of $G_t^n$ and $\Delta_nG_t^n$ and  \eqref{ex<x}, we arrive at that for $0<\epsilon\ll 1$,
\begin{align}\label{GnGn}
|G^n_{t}(x,y)|\le C_\epsilon t^{-\frac{1}{4}-\epsilon},\qquad|\Delta_nG^n_{t}(x,y)|\le C_\epsilon t^{-\frac{3}{4}-\epsilon},
\quad\forall~t\in(0,T],~x,y\in\mathcal O.
\end{align}
Hence,  the Cauchy--Schwarz inequality with respect to the measure $|\Delta_n G^n_{t-s}(x,y)|\ud y\ud s$, Lemma \ref{utilde-uh-1}, Assumption \ref{A1}, and the Minkowski and Burkholder inequalities  yield that for $0<\epsilon\ll 1$,
\begin{align*}
\|e^n(t,x)\|_p^2&\le C n^{-2}+C\int_0^t\int_{\mathcal O}|\Delta_n G^n_{t-s}(x,y)|\|u^n(s,\kappa_n(y))-u(s,\kappa_n(y))\|_p^2\ud y\ud s\\
&\quad +C\int_0^t\int_{\mathcal O}|G^n_{t-s}(x,y)|^2\|u^n(s,\kappa_n(y))-u(s,\kappa_n(y))\|_p^2\ud y\ud s\\
&\le C  n^{-2}+C_\epsilon \int_0^t\int_{\mathcal O}(t-s)^{-\frac{3}{4}-\epsilon}\|u^n(s,\kappa_n(y))-u(s,\kappa_n(y))\|_p^2\ud y\ud s,
\end{align*}
in which the second step used \eqref{GnGn}.
Taking the supremum over $x$ produces
\begin{align}\label{entx}
\sup_x\|e^n(t,x)\|^2_p&\le C n^{-2}+C_\epsilon \int_0^t(t-s)^{-\frac{3}{4}-\epsilon}\sup_x\|e^n(s,x)\|_p^2\ud s,
\end{align}
which along with the Gronwall lemma with weak singularities (see e.g., \cite[Lemma 3.4]{GI98}) completes the proof.
\end{proof}
%
\section{Convergence of density}\label{S4}
For real-valued random variables $X,Y$, we write $\mathrm{d}_{\mathrm{TV}}(X,Y)$ to indicate the total variation distance between $X$ and $Y$, i.e.,
$$\mathrm{d}_{\mathrm{TV}}(X,Y)=2\sup_{A\in\mathscr B(\R)}\{|\mathbb P(X\in A)-\mathbb P(Y\in A)|\}=\sup_{\phi\in\Phi}|\E[\phi(X)]-\E[\phi(Y)]|,$$
where $\Phi$ is the set of continuous functions $\phi:\R\rightarrow\R$ which are bounded by $1$, and $\mathscr B(\R)$ is the Borel $\sigma$-algebra of $\R$. Furthermore, if $\{X_n\}_{n\ge1}$ and $X_\infty$ have the densities $p_{X_n}$ and $p_{X_\infty}$ respectively, then 
\begin{align}\label{pX-PX}
\mathrm{d}_{\mathrm{TV}}(X_n,X_\infty)
=\|p_{X_n}-p_{X_\infty}\|_{L^1(\R)}.
\end{align}
In this section, we show that for $k\in\Z_n$,
the spatial semi-discrete numerical solution $u^n(T,kh)$ admits a density, which converges in $L^1(\R)$ to the density of the exact solution $u(T,kh)$.

\subsection{Malliavin calculus}
We start with introducing some notations in the context of the Malliavin calculus with respect to the space-time white noise (see e.g., \cite{DN06}). 
The isonormal Gaussian family $\{W(h),h\in\cH\}$ corresponding to $\cH:=L^2([0,T]\times\OO)$ is  given by the Wiener integral
$W(h)=\int_{0}^T\int_\OO h(s,y)W(\ud s,\ud y).$ 
Denote by
$\mathcal{S}$ the class of smooth real-valued random variables of the form 
\begin{equation}\label{smoothfunctional}
X= \varphi(W(h_1),\ldots,W(h_n)),
\end{equation}
where $\varphi\in\C_p^\infty(\mathbb{R}^n),$ $h_i\in \cH,\, i=1,\ldots,n, \,n\ge 1.$
Here $\C_p^\infty(\mathbb{R}^n)$ is the space of all $\R$-valued smooth functions on $\R^n$ whose partial derivatives have at most polynomial growth.
The Malliavin derivative of $X\in\mathcal S$ of the form \eqref{smoothfunctional} is an $\cH$-valued random variable given by 
$DX=\sum_{i=1}^n\partial_i\varphi(W(h_1),\ldots,W(h_n))h_i,$
which is also a random field $DX=\{D_{\theta,\xi}X, (\theta,\xi)\in[0,T]\times\OO\}$ with
$D_{\theta,\xi}X\!=\sum_{i=1}^n\partial_i\varphi(W(h_1),\ldots,W(h_n))h_i(\theta,\xi)$
for almost everywhere $(\theta,\xi,\omega)\in[0,T]\times\OO\times\Omega$.
For any $p\ge 1$, we denote the domain of $D$ in $L^p(\Omega;\R)$ by $\mathbb{D}^{1,p}$, meaning that $\mathbb{D}^{1,p}$ is the closure of $\mathcal{S}$ with respect to the norm
$$\|X\|_{1,p}=\left(\mathbb{E}\left[|X|^p+\|DX\|_\cH^p\right]\right)^{\frac{1}{p}}.$$
We define the iteration of the operator $D$ in such a way that for $X\in\mathcal S$, the iterated derivative $D^k X$ is an $\cH^{\bigotimes k}$-valued
random variable. More precisely,  for $k\in\mathbb N_+$, $D^k X=\{D_{r_1,\theta_1}\cdots D_{r_k,\theta_k}X,(r_i,\theta_i)\in[0,T]\times\OO\}$  is a measurable function on the product space $([0,T]\times\OO)^k \times \Omega$. 
Then for  $p\ge1$, $k\in\mathbb N$, denote by $\mathbb{D}^{k,p}$ the completion of $\mathcal{S}$ with respect to the norm
$\|X\|_{k,p}=\big(\mathbb{E}\big[|X|^p+\sum_{j=1}^{k}\|D^jX\|_{\cH^{\bigotimes j}}^p\big]\big)^{\frac{1}{p}}.$
Define $\mathbb{D}^{k,\infty}:=\bigcap_{p\ge 1} \mathbb{D}^{k,p}$ and $\mathbb{D}^{\infty}:=\bigcap_{k\ge 1} \mathbb{D}^{k,\infty}$ to be  topological projective limits. 

We close this part by 
the following proposition, which allows us to obtain the convergence of density of a sequence of random variables from the convergence in $\mathbb D^{1,2}$.
\begin{prop}\cite[Theorem 4.2]{NP13}\label{TVconvergence}
Let $\{X_n\}_{n\ge1}$ be a sequence in $\mathbb D^{1,2}$ such that each $X_n$ admits a density. Let $X_\infty\in\mathbb D^{2,4}$ and let $0<\alpha\le 2$ be such that $\E[\|DX_\infty\|_{\cH}^{-\alpha}]<\infty$. If $X_n\rightarrow X_\infty$ in $\mathbb D^{1,2}$, then there exists a constant $c>0$ depending only on $X_\infty$ such that for any $n\ge1$, 
$$\mathrm{d}_{\mathrm{TV}}(X_n,X_\infty)\le c\|X_n-X_\infty\|_{1,2}^{\frac{\alpha}{\alpha+2}}.$$
\end{prop}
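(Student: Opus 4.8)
The plan is to bound $\mathrm{d}_{\mathrm{TV}}(X_n,X_\infty)=\sup_{\phi\in\Phi}|\E[\phi(X_n)]-\E[\phi(X_\infty)]|$ by cutting each test function at a scale $\lambda>0$: a smoothed part handled via the $L^1(\Omega)$-closeness of $X_n$ and $X_\infty$, and a remainder handled via the smoothing effect of the noise through Malliavin calculus. Fix $\phi\in\Phi$ (by mollifying and truncating one may assume $\phi$ smooth, still bounded by $1$). For an even, compactly supported probability density $\rho$ put $\rho_\lambda:=\lambda^{-1}\rho(\cdot/\lambda)$ and $\phi_\lambda:=\phi*\rho_\lambda$, and write
\begin{align*}
\E[\phi(X_n)]-\E[\phi(X_\infty)] &=\underbrace{\E[(\phi-\phi_\lambda)(X_n)]}_{(\mathrm{I})}+\underbrace{\E[\phi_\lambda(X_n)]-\E[\phi_\lambda(X_\infty)]}_{(\mathrm{II})}\\
&\quad+\underbrace{\E[(\phi_\lambda-\phi)(X_\infty)]}_{(\mathrm{III})}.
\end{align*}

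Term $(\mathrm{II})$ is elementary: $\phi_\lambda\in C_b^1(\R)$ with $\|\phi_\lambda'\|_\infty\le\lambda^{-1}\|\rho'\|_{L^1(\R)}$, so the mean value and Cauchy--Schwarz inequalities give $|(\mathrm{II})|\le\|\phi_\lambda'\|_\infty\,\E|X_n-X_\infty|\le C\lambda^{-1}\|X_n-X_\infty\|_{1,2}$, using only the first component of the $\mathbb{D}^{1,2}$-norm. Term $(\mathrm{III})$ uses the regularity of the limit: because $X_\infty\in\mathbb{D}^{2,4}$ and $\E[\|DX_\infty\|_{\cH}^{-\alpha}]<\infty$, the Malliavin integration-by-parts formula shows that $X_\infty$ has a bounded density $p_\infty$ which in addition satisfies a Besov-type estimate $\|p_\infty(\cdot+a)-p_\infty\|_{L^1(\R)}\le C|a|^{\alpha/2}$ for all $a\in\R$ --- this is the quantitative content of the negative-moment hypothesis. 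Since $\rho$ is even, $|(\mathrm{III})|=|\int_\R\phi\,(p_\infty*\rho_\lambda-p_\infty)|\le\|p_\infty*\rho_\lambda-p_\infty\|_{L^1(\R)}\le\int_\R\rho(z)\,\|p_\infty(\cdot-\lambda z)-p_\infty\|_{L^1(\R)}\,\ud z\le C\lambda^{\alpha/2}$.

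Term $(\mathrm{I})$ is the crux, and the reason the hypotheses on $X_n$ and $X_\infty$ are asymmetric. The law of $X_n$ carries no a priori regularity --- only existence of a density and $X_n\in\mathbb{D}^{1,2}$ --- and there is no uniform control of $\E[\|DX_n\|_{\cH}^{-1}]$ nor of any second-order Malliavin norm of $X_n$, so the argument used for $(\mathrm{III})$ cannot be repeated verbatim for $X_n$. The idea is instead to transfer the regularity of $p_\infty$ onto the law of $X_n$ \emph{quantitatively and uniformly in $n$}, exploiting that $DX_n\to DX_\infty$ in $L^2(\Omega;\cH)$ with $DX_\infty$ non-degenerate: writing $(\mathrm{I})$ as an average over $z$ of the increments $\E[\phi(X_n)-\phi(X_n-\lambda z)]$ and integrating by parts against $X_n$ (the first-order derivative $DX_n$ moving the derivative off $\phi$), one controls the resulting weight on the event where $\|DX_n\|_{\cH}$ is comparable to $\|DX_\infty\|_{\cH}$ --- of probability close to one --- by the negative moments of $\|DX_\infty\|_{\cH}$ together with $\|X_n-X_\infty\|_{1,2}$, while on the complementary small-probability event one uses only $\|\phi\|_\infty\le1$. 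This yields $|(\mathrm{I})|\le C(\lambda^{-1}\|X_n-X_\infty\|_{1,2}+\lambda^{\alpha/2})$, and I expect this step --- extracting a uniform-in-$n$ quantitative density estimate for $X_n$ from $\mathbb{D}^{1,2}$-membership and $\mathbb{D}^{1,2}$-convergence alone, without second-order Malliavin regularity of $X_n$ --- to be the main technical obstacle; it is the heart of \cite[Theorem 4.2]{NP13}.

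Finally, collecting the three estimates and writing $\delta_n:=\|X_n-X_\infty\|_{1,2}$ gives $\mathrm{d}_{\mathrm{TV}}(X_n,X_\infty)\le C(\delta_n\lambda^{-1}+\lambda^{\alpha/2})$ for every $\lambda>0$; balancing the two terms by taking $\lambda=\delta_n^{2/(\alpha+2)}$ yields $\mathrm{d}_{\mathrm{TV}}(X_n,X_\infty)\le C\,\delta_n^{\alpha/(\alpha+2)}$, which is the claim.
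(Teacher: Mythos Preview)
The paper does not prove this proposition: it is quoted verbatim from \cite[Theorem~4.2]{NP13} and used as a black box in Theorem~\ref{dTVR}, so there is no in-paper proof to compare against. That said, your outline deserves comment on its own merits.

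Your decomposition and your treatment of $(\mathrm{II})$ and $(\mathrm{III})$ are correct. The gap is exactly where you locate it, in $(\mathrm{I})$, but the sketch you give does not close it. When you write ``integrating by parts against $X_n$ (the first-order derivative $DX_n$ moving the derivative off $\phi$)'', you are invoking the duality $\E[\phi'(X_n)\langle DX_n,u\rangle_\cH]=\E[\phi(X_n)\,\delta(u)]$ with $u=DX_n/\|DX_n\|_\cH^{2}$, so that $\langle DX_n,u\rangle_\cH=1$. But this requires $u\in\mathrm{Dom}(\delta)$, which in turn needs second-order Malliavin differentiability of $X_n$ --- precisely what is \emph{not} assumed. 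Your event-splitting idea (comparing $\|DX_n\|_\cH$ with $\|DX_\infty\|_\cH$) addresses the size of the weight, not its existence; without $X_n\in\mathbb D^{2,p}$, the random variable $\delta(DX_n/\|DX_n\|_\cH^{2})$ need not be defined at all, so there is nothing to bound on either event.

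The mechanism that makes the argument in \cite{NP13} work is to integrate by parts in the direction of $DX_\infty$, not $DX_n$. Because $X_\infty\in\mathbb D^{2,4}$ and $\E[\|DX_\infty\|_\cH^{-\alpha}]<\infty$, the vector $u=DX_\infty/\|DX_\infty\|_\cH^{2}$ \emph{does} lie in $\mathrm{Dom}(\delta)$, and the duality formula applies with the mere hypothesis $\phi(X_n)\in\mathbb D^{1,2}$, yielding
\[
\E\Big[\phi'(X_n)\,\frac{\langle DX_n,DX_\infty\rangle_\cH}{\|DX_\infty\|_\cH^{2}}\Big]=\E[\phi(X_n)\,H_\infty],
\]
with a weight $H_\infty=\delta(DX_\infty/\|DX_\infty\|_\cH^{2})$ depending only on $X_\infty$. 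The error made in replacing $\langle DX_n,DX_\infty\rangle_\cH/\|DX_\infty\|_\cH^{2}$ by $1$ is controlled by $\|DX_n-DX_\infty\|_{L^2(\Omega;\cH)}$ together with the negative moments of $\|DX_\infty\|_\cH$ (after a truncation at level $\epsilon>0$ to tame the singularity). This is where the full $\mathbb D^{1,2}$-norm of $X_n-X_\infty$ enters, and why only first-order regularity of $X_n$ is needed: all the second-order structure required for the divergence is carried by $X_\infty$.
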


\subsection{Convergence in $\mathbb D^{1,2}$}
In this part, we extend the strong convergence of the spatial FDM  to the convergence in $\mathbb D^{1,2}$. 
It is shown in \cite[Proposition 3.1]{CH20} or \cite[Lemma 3.2]{CC01}  that if $f(x)=(x^3-x)K_R(x)$, then
for any $(t,x)\in[0,T]\times\OO$, $u(t,x)\in\D^{1,2}$ and satisfies
\begin{align}\label{Drzchi}\notag
D_{r,z}u(t,x)&=G_{t-r}(x,z)\sigma(u(r,z))+\int_r^t\int_{\mathcal O}\Delta G_{t-s}(x,y)f^\prime(u(s,y))D_{r,z}u(s,y)\ud y\ud s\\
&\quad+\int_r^t\int_{\mathcal O}G_{t-s}(x,y)\sigma^\prime(u(s,y))D_{r,z}u(s,y)W(\ud s,\ud y),
\end{align}
if $r\le t$, and $D_{r,z}u(t,x)=0$, if $r>t$. Their proofs rely on the global Lipschitz continuity of $f_R$, and thus \eqref{Drzchi} holds naturally whenever $f$ satisfies Assumption \ref{A1}. Further, we impose Assumption \ref{A4} to study the regularity of the exact solution in the Malliavin Sobolev space.

\begin{Asp}\label{A4}
For some integer $k\ge1$, $f$ and $\sigma$ have bounded derivatives up to order $k$.
\end{Asp}
\begin{lem}\label{lem:chimp0}
Under Assumptions \ref{A1} and \ref{A4},  $u(t,x)\in \D^{k,\infty}$ for any $(t,x)\in[0,T]\times\OO$. Moreover, for any $p\ge1$, there exists $C=C(k,p,T)$ such that
$$\sup_{t,x}\|u(t,x)\|_{k,p}\le C.$$
\end{lem}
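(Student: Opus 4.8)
The plan is to prove the claim by induction on the order of differentiation $j=1,\dots,k$, at each stage establishing that $u(t,x)\in\D^{j,p}$ for all $p\ge1$ together with a uniform bound $\sup_{t,x}\E[\|D^j u(t,x)\|_{\cH^{\otimes j}}^p]\le C(j,p,T)$. The base case $j=0$ is exactly the moment bound \eqref{eq:bound-e} (equivalently Lemma \ref{Holder-e}). For the inductive step, one differentiates the mild formulation of \eqref{CH}; the Malliavin differentiability of solutions to such SPDEs with Lipschitz-type coefficients having bounded derivatives is standard (cf.\ the argument behind \eqref{Drzchi}), so the essential work is the moment estimate.

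First, for the case $j=1$, I would start from \eqref{Drzchi}: for fixed $(r,z)$ with $r\le t$, the process $(t,x)\mapsto D_{r,z}u(t,x)$ solves a linear SPDE with the same Green kernel $G$, inhomogeneous term $G_{t-r}(x,z)\sigma(u(r,z))$, and (random, bounded) coefficients $f'(u(s,y))$ and $\sigma'(u(s,y))$. Raising to the $p$th power, using the Burkholder inequality together with \eqref{Gtxy} and the $L^2$-estimate $\int_0^t\int_\OO G_{t-s}^2(x,y)\,\ud y\,\ud s\le Ct^{3/4}$ for the stochastic term, and the operator bound \eqref{AKM} (or directly \eqref{Gtxy}, \eqref{DGtxy}) for the $\Delta G$ term, one gets an inequality of the form
\begin{align*}
\sup_x\E\big[|D_{r,z}u(t,x)|^p\big]\le C\sup_x |G_{t-r}(x,z)|^p+C_\epsilon\int_r^t(t-s)^{-\frac34-\epsilon}\sup_x\E\big[|D_{r,z}u(s,x)|^p\big]\,\ud s,
\end{align*}
and the singular Gronwall lemma (\cite[Lemma 3.4]{GI98}) yields $\sup_x\E[|D_{r,z}u(t,x)|^p]\le C\sup_x|G_{t-r}(x,z)|^p$. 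Integrating this bound against $\ud z\,\ud r$ over $[0,t]\times\OO$ and invoking Lemma \ref{Greg} (which gives $\int_0^t\int_\OO G_{t-r}^2(x,z)\,\ud z\,\ud r\le C$) controls $\E[\|Du(t,x)\|_{\cH}^p]$; to pass from the pointwise-in-$(r,z)$ $L^p$ bound to an $L^p$ bound on the $\cH$-norm one applies Minkowski's integral inequality in the form $\|\,\|Du(t,x)\|_\cH\,\|_p\le (\int_0^t\int_\OO \|D_{r,z}u(t,x)\|_p^2\,\ud z\,\ud r)^{1/2}$, valid since $p\ge2$ (the case $1\le p<2$ then follows by Hölder).

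For the inductive step from $j$ to $j+1$ (when $j+1\le k$), one differentiates \eqref{Drzchi} $j$ more times; by the Leibniz/Faà di Bruno rule applied to $f'(u(s,y))D_{r,z}u(s,y)$ and $\sigma'(u(s,y))D_{r,z}u(s,y)$, the equation for $D^{j+1}u(t,x)$ is again linear in its highest-order derivative $D^{j+1}u$ with bounded coefficients (here Assumption \ref{A4} supplies boundedness of $f^{(i)},\sigma^{(i)}$ up to order $k$), and the inhomogeneous terms are sums of products of lower-order Malliavin derivatives of $u$, each already controlled in every $L^p(\Omega)$ by the inductive hypothesis together with a Hölder inequality in $\Omega$. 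The same Burkholder + singular-Gronwall machinery, combined with the kernel estimates \eqref{Gtxy}, \eqref{DGtxy}, Lemma \ref{Greg}, and the operator bound \eqref{AKM}, then closes the estimate and gives $\sup_{t,x}\|u(t,x)\|_{j+1,p}\le C$. Summing the bounds over $j=0,\dots,k$ gives the stated conclusion, and the uniform-in-$(t,x)$ finiteness of all these norms is precisely the statement $u(t,x)\in\D^{k,\infty}$.

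The main obstacle is bookkeeping rather than conceptual: organizing the higher-order chain rule so that at each order the equation is genuinely linear in the top derivative with the remaining terms expressed as products of strictly-lower-order derivatives, and then verifying that every such product lies in $L^p(\Omega)$ uniformly in $(t,x)$ — this is where one repeatedly uses the generalized Hölder inequality and the inductive moment bounds. A secondary technical point is the correct handling of the $\cH^{\otimes j}$-norm: one must integrate the pointwise $L^p(\Omega)$ estimates in the $j$ kernel variables and invoke Minkowski's integral inequality (legitimate for $p\ge2$, then extended by Hölder), exactly as in the $j=1$ case above.
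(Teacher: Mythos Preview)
Your strategy is sound and, in outline, is what the paper's omitted details would amount to; but the paper frames the argument differently. The paper does \emph{not} argue directly on $u$: it introduces the Picard iterates $w^i$, shows (i) $w^i(t,x)\to u(t,x)$ in every $L^p(\Omega)$ and (ii) $\sup_i\|w^i(t,x)\|_{k,p}<\infty$, and then invokes \cite[Lemma~1.5.3]{DN06} to conclude $u(t,x)\in\D^{k,\infty}$. This simultaneously yields differentiability and the moment bound, whereas your route presupposes differentiability (which you cite as standard) and then bounds the moments by induction on the order. Both approaches are classical; the Picard route is cleaner for the existence-in-$\D^{k,p}$ part, while your direct induction makes the structure of the higher-order derivative equations more explicit.

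One technical point in your $j=1$ step deserves care. You run the singular Gronwall lemma on $\phi(t):=\sup_x\E[|D_{r,z}u(t,x)|^p]$ with forcing $C\sup_x|G_{t-r}(x,z)|^p\le C(t-r)^{-p/4}$; for $p\ge4$ this forcing is not locally integrable at $t=r$, and the iterated kernel in the Gronwall bound produces integrals $\int_r^t(t-s)^{-\frac34-\epsilon}(s-r)^{-p/4}\ud s$ that diverge. The clean fix is to work directly with the Hilbert-norm quantity $\Psi(t):=\sup_x\|Du(t,x)\|_{L^p(\Omega;\cH)}^2$: using the Burkholder inequality for $\cH$-valued martingales and Minkowski \emph{before} the Gronwall step gives
\[
\Psi(t)\le C\int_0^t\!\!\int_\OO G_{t-r}^2(x,z)\,\ud z\,\ud r
+C_\epsilon\int_0^t(t-s)^{-\frac34-\epsilon}\Psi(s)\,\ud s,
\]
where the forcing term is now bounded by Lemma~\ref{Greg}, and the singular Gronwall lemma applies for every $p\ge2$. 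The same modification propagates to the higher-order step. This is exactly the norm the paper uses later in the proof of Proposition~\ref{eq.strong-trun-D}.
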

\begin{proof}
Define the Picard approximation by $w^0(t,x)=u_0(x)$, $(t,x)\in[0,T]\times\OO$, and for $i\in\mathbb N$,
\begin{align*}
w^{i+1}(t,x)&=\GG_tu_0(x)+\int_0^t\int_\OO \Delta G_{t-s}(x,y)f(w^{i}(s,y))\ud y\ud s\\
&\quad+\int_0^t\int_\OO G_{t-s}(x,y)\sigma(w^i(s,y))W(\ud s,\ud y),\quad (t,x)\in[0,T]\times\OO.
\end{align*}
Fix $(t,x)\in[0,T]\times\OO$.
In view of \cite[Lemma 1.5.3]{DN06},
the proof of $u(t,x)\in\D^{k,\infty}$ boils down to proving that 

(i) $\{w^i(t,x)\}_{i\ge1}$ converges to $u(t,x)$ in $L^p(\Omega;\R)$ for every $p\ge1$.

(ii) for any  $p\ge1$,
$\sup_{i\ge0}\|w^i(t,x)\|_{k,p}<\infty.$

Property (i) and property (ii) with $k=1$ and $p=2$
can be obtained in the same way as in \cite[Lemma 3.2]{CC01} (the sequence $\{w^i(t,x)\}_{i\ge1}$ corresponds to $\{u_{n,k}(t,x)\}_{k\ge1}$ in \cite{CC01}). 
The proof of property (ii) with general $k,p\ge1$ is omitted since it is standard and similar to those for other kinds of SPDEs with Lipschitz continuous coefficients; see \cite[Proposition 4.3]{BP98} for the case of stochastic heat equations, \cite[Theorem 1]{QS04} for the case of stochastic wave equations.
\end{proof}

Similar to properties (i) and (ii), the standard Picard approximation also shows that 
for any $(t,x)\in[0,T]\times\OO$, $u^n(t,x)\in\D^{1,2}$. 

\begin{prop}\label{eq.strong-trun-D}
Suppose that  $u_0\in\C^3(\OO)$  and Assumptions \ref{A1} and \ref{A4} hold for $k=2$. Then there exists some constant $C$ such that for any $(t,x)\in[0,T]\times\OO$,
$$\E\left[\|Du^n(t,x)-Du(t,x)\|_{\cH}^2\right]\le Cn^{-2}.$$
\end{prop}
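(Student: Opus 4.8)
The plan is to mimic the proof of Theorem \ref{eq.strong-trun}, but now at the level of the Malliavin derivatives, which themselves satisfy linear SPDEs driven by the same Green functions. First I would write down the integral equation for $D_{r,z}u^n(t,x)$ analogous to \eqref{Drzchi}: since $u^n$ solves \eqref{unR} with Lipschitz coefficients and $\sigma,f\in\C^2_b$ (Assumption \ref{A4} with $k=2$), a standard Picard argument gives, for $r\le t$,
\begin{align*}
D_{r,z}u^n(t,x)&=G^n_{t-r}(x,\kappa_n(z))\sigma(u^n(r,\kappa_n(z)))\\
&\quad+\int_r^t\int_{\mathcal O}\Delta_nG^n_{t-s}(x,y)f^\prime(u^n(s,\kappa_n(y)))D_{r,z}u^n(s,\kappa_n(y))\ud y\ud s\\
&\quad+\int_r^t\int_{\mathcal O}G^n_{t-s}(x,y)\sigma^\prime(u^n(s,\kappa_n(y)))D_{r,z}u^n(s,\kappa_n(y))W(\ud s,\ud y),
\end{align*}
and $D_{r,z}u^n(t,x)=0$ for $r>t$. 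As an intermediate object I would introduce $\widetilde D_{r,z}u(t,x)$ by replacing in this equation the kernels $G,\Delta G$ by $G^n,\Delta_nG^n$, the spatial sampling of $D_{r,z}u$ by $D_{r,z}u(s,\kappa_n(y))$, etc., just as $\tilde u^n$ was introduced before Lemma \ref{utilde-uh-1}; then split $\|Du^n(t,x)-Du(t,x)\|_\cH\le\|Du^n(t,x)-\widetilde Du(t,x)\|_\cH+\|\widetilde Du(t,x)-Du(t,x)\|_\cH$.

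The second term is estimated in the spirit of Lemma \ref{utilde-uh-1}: the difference $\widetilde D_{r,z}u(t,x)-D_{r,z}u(t,x)$ decomposes into (a) the initial-data-type term $[G^n_{t-r}(x,\kappa_n(z))-G_{t-r}(x,z)]\sigma(u(r,z))$ plus a term with $\sigma$ evaluated at $u(r,\kappa_n(z))$ minus $u(r,z)$; (b) a drift term involving $\Delta_nG^n-\Delta G$ times $f^\prime(u)D_{r,z}u$ plus one with $\Delta G$ times the difference of $f^\prime(u)D_{r,z}u$ sampled at $\kappa_n(y)$ versus $y$; (c) the analogous stochastic term with $G^n-G$ and with $G$. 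Integrating the $\cH$-norm (i.e.\ $\ud r\ud z$ on $[0,T]\times\OO$) and using Lemma \ref{lem:Gn-G} for the kernel differences, \eqref{Gtxy}--\eqref{DGtxy} and Lemma \ref{Greg} for the exact kernels, the boundedness of $\sigma,\sigma^\prime,f^\prime$, the uniform moment bound $\sup_{t,x}\|u(t,x)\|_{2,p}\le C$ from Lemma \ref{lem:chimp0} (which controls $\E\|Du(t,x)\|_\cH^p$ and, via $D^2u$, the spatial Hölder continuity of $Du$ needed for the $\kappa_n$-sampling errors), together with Lemma \ref{Holder-exact} for $\|u(s,\kappa_n(y))-u(s,y)\|_p$, yields a bound $Cn^{-1}$ for each piece after absorbing a Gronwall-type self-referential term. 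For the first term $\|Du^n(t,x)-\widetilde Du(t,x)\|_\cH$ I would subtract the two linear equations: the kernels now agree, so the difference is driven by $f^\prime(u^n)D u^n-f^\prime(u)Du$ and $\sigma^\prime(u^n)Du^n-\sigma^\prime(u)Du$, which I split as $[f^\prime(u^n)-f^\prime(u)]Du^n+f^\prime(u)[Du^n-\widetilde Du]+f^\prime(u)[\widetilde Du-Du]$ and similarly for $\sigma^\prime$; here the Lipschitz continuity of $f^\prime,\sigma^\prime$ (again Assumption \ref{A4}, $k=2$) turns $f^\prime(u^n)-f^\prime(u)$ into $O(\|u^n-u\|)$, controlled by $n^{-1}$ via Theorem \ref{eq.strong-trun}, while $Du^n$ has uniformly bounded moments (from the Picard bound for $u^n$, which is uniform in $n$ by the estimates \eqref{GnGn}). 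Using the Cauchy--Schwarz inequality against the measure $|\Delta_nG^n_{t-s}(x,y)|\ud y\ud s$ and the Burkholder inequality against $|G^n_{t-s}(x,y)|^2\ud y\ud s$, together with $|\Delta_nG^n_t|\le C_\epsilon t^{-3/4-\epsilon}$ and $|G^n_t|\le C_\epsilon t^{-1/4-\epsilon}$ from \eqref{GnGn}, one arrives at
\begin{align*}
\sup_x\E\big[\|Du^n(t,x)-Du(t,x)\|_\cH^2\big]\le Cn^{-2}+C_\epsilon\int_0^t(t-s)^{-\frac34-\epsilon}\sup_x\E\big[\|Du^n(s,x)-Du(s,x)\|_\cH^2\big]\ud s,
\end{align*}
and the singular Gronwall lemma (\cite[Lemma 3.4]{GI98}) closes the argument.

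The main obstacle is controlling the $\cH$-norm of the "frozen-coefficient" difference $\widetilde Du-Du$, specifically the $\kappa_n$-sampling errors of the integrand $f^\prime(u(s,y))D_{r,z}u(s,y)$ and $\sigma^\prime(u(s,y))D_{r,z}u(s,y)$: one needs joint control, uniform in the Malliavin parameters $(r,z)$ after integration, of the spatial increment $\|D_{r,z}u(s,\kappa_n(y))-D_{r,z}u(s,y)\|_{L^p(\Omega)}$. This is where Lemma \ref{lem:chimp0} with $k=2$ is essential — it gives $\sup_{t,x}\E\|D^2u(t,x)\|_{\cH^{\otimes2}}^p\le C$, which via a Kolmogorov-type argument (or a direct Green-function estimate on the equation satisfied by $D^2u$, paralleling Lemma \ref{Greg}) provides the requisite spatial regularity of $y\mapsto D_{r,z}u(s,y)$ in $\cH$; interchanging the $\ud r\ud z$ integration with the spatial estimates via Minkowski's inequality then produces the clean $n^{-1}$ rate. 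The rest is a careful but routine bookkeeping of the six-to-eight error pieces, each handled exactly as in Lemmas \ref{utilde-uh-1} and Theorem \ref{eq.strong-trun}.
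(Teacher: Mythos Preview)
Your approach is essentially the paper's, organized slightly differently: the paper skips the auxiliary process $\widetilde D$ and directly decomposes $D_{r,z}u^n-D_{r,z}u$ into an initial-data piece $I^n$, four drift pieces $J^{n,1},\ldots,J^{n,4}$, and three stochastic pieces $K^{n,1},K^{n,2},K^{n,3}$; your two-stage split recovers exactly the same terms, just grouped into two batches. The paper also pairs the coefficient difference $f'(u^n)-f'(u)$ with $Du$ rather than with $Du^n$, so no uniform-in-$n$ moment bound on $Du^n$ is ever needed; your grouping works as well but costs that extra lemma.

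One point needs correcting. The spatial H\"older estimate $\|Du(t,x_1)-Du(t,x_2)\|_{L^p(\Omega;\cH)}\le C|x_1-x_2|$ does \emph{not} come from $D^2u$ bounds via a Kolmogorov argument: the second Malliavin derivative $D^2u(t,x)\in\cH^{\otimes 2}$ is unrelated to spatial increments of the map $x\mapsto Du(t,x)$. The paper instead proves this estimate as a claim inside the proof, by differencing the linear equation \eqref{Drzchi} for $Du$ at $x_1$ and $x_2$ and applying the kernel-increment bounds of Lemma \ref{Greg} and \eqref{DeltaGG}; this needs only boundedness of $f',\sigma'$ and the first-order moment bound on $Du$ from Lemma \ref{lem:chimp0} with $k=1$. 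The hypothesis $k=2$ in Assumption \ref{A4} enters the argument solely through the \emph{Lipschitz} continuity of $f'$ and $\sigma'$, precisely where you invoke it to convert $f'(u^n)-f'(u)$ and $\sigma'(u^n)-\sigma'(u)$ into $O(\|u^n-u\|)$.
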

\begin{proof}
By the chain rule and \eqref{unR}, we obtain
\begin{align}\notag\label{Drzchintx}
D_{r,z}u^n(t,x)&=G^n_{t-r}(x,z)\sigma(u^n(r,\kappa_n(z)))\\\notag
&\quad+\int_r^t\int_{\mathcal O}\Delta_nG^n_{t-s}(x,y)f^\prime(u^n(s,\kappa_n(y)))D_{r,z}u^n(s,\kappa_n(y))\ud y\ud s\\
&\quad+\int_r^t\int_{\mathcal O}G^n_{t-s}(x,y)\sigma^\prime(u^n(s,\kappa_n(y)))D_{r,z}u^n(s,\kappa_n(y))W(\ud s,\ud y),
\end{align}
if $r\le t$, and $D_{r,z}u^n(t,x)=0$, if $r>t$. 
Combining \eqref{Drzchintx} and \eqref{Drzchi}, we write
\begin{align}\label{Drzchi-Drzchi}
&D_{r,z}u^n(t,x)-D_{r,z}u(t,x):=I^n_{t,x}(r,z)+J^n_{t,x}(r,z)+K^n_{t,x}(r,z),
\end{align}
where for $r>t$, $I^n_{t,x}(r,z)=J^n_{t,x}(r,z)=K^n_{t,x}(r,z)=0$, and for $r\le t$, 
\begin{align*}
I^n_{t,x}(r,z)&=G^n_{t-r}(x,z)\sigma(u^n(r,\kappa_n(z)))-G_{t-r}(x,z)\sigma(u(r,z)),\\
J^n_{t,x}(r,z)
&=\int_{r}^t\int_{\mathcal O}\Delta_nG^n_{t-s}(x,y)f^\prime(u^n(s,\kappa_n(y)))\left[D_{r,z}u^n(s,\kappa_n(y))-D_{r,z}u(s,\kappa_n(y))\right]\ud y\ud s\\
&\quad+\int_{r}^t\int_{\mathcal O}\Delta_nG^n_{t-s}(x,y)f^\prime(u^n(s,\kappa_n(y)))\left[D_{r,z}u(s,\kappa_n(y))-D_{r,z}u(s,y)\right]\ud y\ud s\\
&\quad+\int_{r}^t\int_{\mathcal O}\Delta_nG^n_{t-s}(x,y)\left[f^\prime(u^n(s,\kappa_n(y)))-f^\prime(u(s,y))\right]D_{r,z}u(s,y)\ud y\ud s\\
&\quad+\int_{r}^t\int_{\mathcal O}\left[\Delta_nG^n_{t-s}(x,y)-\Delta G_{t-s}(x,y)\right]f^\prime(u(s,y))D_{r,z}u(s,y)\ud y\ud s\\
&=:J^{n,1}_{t,x}(r,z)+J^{n,2}_{t,x}(r,z)+J^{n,3}_{t,x}(r,z)+J^{n,4}_{t,x}(r,z),\\
K^n_{t,x}(r,z)
&=\int_{r}^t\int_{\mathcal O}G^n_{t-s}(x,y)\sigma^\prime(u^n(s,\kappa_n(y)))[D_{r,z}u^n(s,\kappa_n(y))-D_{r,z}u(s,\kappa_n(y))]W(\ud s,\ud y)\\
&\quad+\int_{r}^t\int_{\mathcal O}G^n_{t-s}(x,y)\sigma^\prime(u^n(s,\kappa_n(y)))\left[D_{r,z}u(s,\kappa_n(y))-D_{r,z}u(s,y)\right]W(\ud s,\ud y)\\
&\quad+\int_{r}^t\int_{\mathcal O}\left[G^n_{t-s}(x,y)\sigma^\prime(u^n(s,\kappa_n(y)))-G_{t-s}(x,y)\sigma^\prime(u(s,y))\right]D_{r,z}u(s,y)W(\ud s,\ud y)\\
&=:K^{n,1}_{t,x}(r,z)+K^{n,2}_{t,x}(r,z)+K^{n,3}_{t,x}(r,z).
\end{align*}
When $r>t$, we always set $J^{n,i}_{t,x}(r,z)=K^{n,j}_{t,x}(r,z)=0$ for $i=1,2,3,4$ and $j=1,2,3$.
Hereafter, let $\epsilon\ll 1$ be an arbitrarily fixed positive number. A combination of Lemma \ref{Holder-exact} and Theorem \ref{eq.strong-trun} reveals that for any $p\ge2$,
\begin{align}\label{eq.chi-chi}\notag
\|u^n(s,\kappa_n(y))-u(s,y)\|_p&\le \|u^n(s,\kappa_n(y))-u(s,\kappa_n(y))\|_p+\|u(s,\kappa_n(y))-u(s,y)\|_p\\
&\le Cn^{-1},
\end{align}
for all $(s,y)\in[0,T]\times\OO$. 
Then the Lipschitz continuity of $f^\prime$,
the Minkowski and Cauchy-Schwarz inequalities and \eqref{GnGn} produce
\begin{align*}
\|J^{n,3}_{t,x}\|_{L^2(\Omega;\cH)}&\!\le\! C_{\epsilon}\!\int_{0}^t\int_{\mathcal O}(t\!-\!s)^{-\frac{3}{4}-\epsilon}
\|u^n(s,\kappa_n(y))\!-\!u(s,y)\|_4\|Du(s,y)\|_{L^4(\Omega;\cH)}\ud y\ud s\\
&\le Cn^{-1}\sup_{t,x}\|Du(t,x)\|_{L^4(\Omega;\cH)}\le Cn^{-1},
\end{align*}
where \eqref{eq.chi-chi} and Lemma \ref{lem:chimp0} were used in the second line. Similarly, by the boundedness of $f^\prime$, Lemma \ref{Holder-e},  Lemma \ref{lem:chimp0}, and  \eqref{DeltaGnG},
\begin{align*}
\|J^{n,4}_{t,x}\|_{L^2(\Omega;\cH)}&\le \int_{0}^t\int_{\mathcal O}|\Delta_nG^n_{t-s}(x,y)-\Delta G_{t-s}(x,y)|\|f^\prime(u(s,y))Du(s,y)\|_{L^2(\Omega;\cH)}\ud y\ud s\\
&\le C \int_{0}^t\int_{\mathcal O}|\Delta_nG^n_{t-s}(x,y)-\Delta G_{t-s}(x,y)|
\ud y\ud s\le Cn^{-1}.
\end{align*}
Since $\sigma$ is bounded and Lipschitz continuous, it follows from the Minkowski inequality, \eqref{eq.chi-chi}, \eqref{Gn-G}, and \eqref{Gtxy} that for $p\ge2$,
\begin{align*}
\|I_{t,x}^n\|_{L^p(\Omega;\cH)}^2&\le \int_0^t\int_\OO \|G^n_{t-r}(x,z)\sigma(u^n(r,\kappa_n(z)))-G_{t-r}(x,z)\sigma(u(r,z))\|_{p}^2\ud z\ud r\\
&\le 2\int_0^t\int_\OO|G^n_{t-r}(x,z)-G_{t-r}(x,z)|^2 \|\sigma(u^n(r,\kappa_n(z)))\|_{p}^2\ud z\ud r\\
&\quad+2\int_0^t\int_\OO |G_{t-r}(x,z)|^2\|\sigma(u^n(r,\kappa_n(z)))-\sigma(u(r,z))\|_p^2\ud z\ud r\le Cn^{-2}.
\end{align*}
Replacing $\sigma$ by $\sigma^\prime$ in the above inequality, we also have
\begin{align*}
\int_0^t\int_\OO \|G^n_{t-s}(x,y)\sigma^\prime(u^n(s,\kappa_n(y)))-G_{t-s}(x,y)\sigma^\prime(u(s,y))\|_{4}^2\ud y\ud s\le Cn^{-2},
\end{align*}
which along with the Burkholder inequality for Hilbert space valued martingales (see e.g. \cite[(4.18)]{BP98}), the H\"older inequality and Lemma \ref{lem:chimp0} indicates
\begin{align*}
\|K^{n,3}_{t,x}\|_{L^2(\Omega;\cH)}^2&\le C\int_0^t\int_\OO \left\|\left(G^n_{t-s}(x,y)\sigma^\prime(u^n(s,\kappa_n(y)))-G_{t-s}(x,y)\sigma^\prime(u(s,y))\right)Du(s,y)\right\|^2_{L^2(\Omega;\cH)}\ud y\ud s\\
&\le Cn^{-2}\sup_{t,x}\|Du(t,x)\|_{L^4(\Omega;\cH)}^2\le Cn^{-2}.
\end{align*}
In order to estimate $I^{n,2}_{t,x}$ and $K^{n,2}_{t,x}$, we claim that
for  $p\ge2$, there exists some constant $C=C(p,T)$ such that for any $x_1,x_2\in\OO$ and $t\in(0,T]$,
\begin{align}\label{eq.Dchi-holder}
\|Du(t,x_1)-Du(t,x_2)\|_{L^p(\Omega;\cH)}\le C|x_1-x_2|.
\end{align}
Indeed, from \eqref{Drzchi}, we have that for $r\le t$,
\begin{align*}
D_{r,z}u(t,x_1)-D_{r,z}u(t,x_2)&=\left[G_{t-r}(x_1,z)-G_{t-r}(x_2,z)\right]\sigma(u(r,z))\\
&\quad+\int_r^t\int_{\mathcal O}\left[\Delta G_{t-s}(x_1,y)-\Delta G_{t-s}(x_2,y)\right]f^\prime(u(s,y))D_{r,z}u(s,y)\ud y\ud s\\
&\quad+\int_r^t\int_{\mathcal O}\left[G_{t-s}(x_1,y)-G_{t-s}(x_2,y)\right]\sigma^\prime(u(s,y))D_{r,z}u(s,y)W(\ud s,\ud y)\\
&=:L_1(r,z)+L_2(r,z)+L_3(r,z).
\end{align*}
For $r>t$, let $L_i(r,z)=0$, $i=1,2,3$.
The boundedness of $\sigma$ and Lemma \ref{Greg}
indicate
$$\|L_1\|^2_{L^p(\Omega;\cH)}\le C\int_0^t\int_\OO|G_{t-r}(x_1,z)-G_{t-r}(x_2,z)|^2\ud z\ud r\le C|x_1-x_2|^{2}.$$
Since $f^\prime$ is bounded, it follows from \eqref{DeltaGG}  and Lemma \ref{lem:chimp0} that 
\begin{align*}
\|L_2\|_{L^p(\Omega;\cH)}&\le C\int_0^t\int_{\mathcal O}\left|\Delta G_{t-s}(x_1,y)-\Delta G_{t-s}(x_2,y)\right|\|Du(s,y)\|_{L^p(\Omega;\cH)}\ud y\ud s\le C|x_1-x_2|.
\end{align*}
Similarly, it follows from the Burkholder inequality,  Lemmas \ref{Greg} and \ref{lem:chimp0} that 
$\|L_3\|_{L^p(\Omega;\cH)}\le C|x_1-x_2|.$
Gathering the above estimates of $L_1$, $L_2$ and $L_3$, we obtain 
\eqref{eq.Dchi-holder}.
By means of \eqref{eq.Dchi-holder} and \eqref{GnGn}, it can be verified that
$
\|J^{n,2}_{t,x}\|_{L^2(\Omega;\cH)}+\|K^{n,2}_{t,x}\|_{L^2(\Omega;\cH)}\le Cn^{-1}.
$
Substituting the above estimates of $\|I^{n}_{t,x}\|_{L^2(\Omega;\cH)}$, $\|J^{n,i}_{t,x}\|_{L^2(\Omega;\cH)}$, $i=2,3,4$, and $\|K^{n,j}_{t,x}\|_{L^2(\Omega;\cH)}$, $j=2,3$,
into \eqref{Drzchi-Drzchi}, we deduce that for  $0<\epsilon\ll1$,
\begin{align}\label{DunR-DuR}\notag
&\quad\|Du^n(t,x)-Du(t,x)\|^2_{L^2(\Omega;\cH)}\\\notag
&\le Cn^{-2}+C\left|\int_0^t\int_\OO|\Delta_nG^n_{t-s}(x,y)|\|Du^n(s,\kappa_n(y))-Du(s,\kappa_n(y))\|_{L^2(\Omega;\cH)}\ud y\ud s\right|^2\\\notag
&\quad+C\int_0^t\int_\OO|G^n_{t-s}(x,y)|^2\|Du^n(s,\kappa_n(y))-Du(s,\kappa_n(y))\|^2_{L^2(\Omega;\cH)}\ud y\ud s\\
&\le Cn^{-2}+C\int_0^t\int_\OO(t-s)^{-\frac{3}{4}-\epsilon}\|Du^n(s,\kappa_n(y))-Du(s,\kappa_n(y))\|^2_{L^2(\Omega;\cH)}\ud y\ud s,
\end{align}
where the last step used the H\"older inequality and \eqref{GnGn}.
Similar to \eqref{entx}, by taking the supremum over $x\in\OO$ on both sides of \eqref{DunR-DuR} and applying 
the Gronwall lemma with weak singularities (see e.g., \cite[Lemma 3.4]{GI98}), we complete the proof.
\end{proof}
\subsection{Convergence of density}
In this part, we present the convergence of density of the numerical solution $\{u^n(T,kh)\}_{n\ge2}$ for $k\in\Z_n$. 
In order to apply Proposition \ref{TVconvergence} with $X_\infty=u(T, x)$,
we impose Assumption \ref{A3} and investigate the negative moment estimate of  $Du(t, x)$.
\begin{Asp}\label{A3}
There exists some $\sigma_0>0$ such that $|\sigma(x)|>\sigma_0$, for any $x\in\R$.
\end{Asp}

\begin{lem}\label{lem:chimp}
Let $x\in\OO$ and Assumptions \ref{A1} and \ref{A3} hold. Then there is $\rho\in(0,1]$ such that
\begin{align}\label{nonde}
\E\big[\|Du(T,x)\|^{-2\rho}_{\cH}\big]\le C(\rho,T).
\end{align}
\end{lem}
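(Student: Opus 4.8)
The plan is to obtain a \emph{lower bound} on $\|Du(T,x)\|_\cH^2$ that is bounded below by a positive quantity with enough integrability to control negative moments, and the natural place to look is the small-time behaviour of the first (non-random) term $G_{t-r}(x,z)\sigma(u(r,z))$ in the Clark--Ocone-type representation \eqref{Drzchi}. Concretely, for $\delta\in(0,T)$ write
\begin{align*}
\|Du(T,x)\|_\cH^2\ge \int_{T-\delta}^T\int_\OO |D_{r,z}u(T,x)|^2\,\ud z\,\ud r,
\end{align*}
and on this region decompose $D_{r,z}u(T,x)=G_{T-r}(x,z)\sigma(u(r,z))+a_{r,z}+b_{r,z}$, where $a$ collects the Lebesgue-type drift correction and $b$ the stochastic-integral correction from \eqref{Drzchi}. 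Using the elementary inequality $\|\cdot\|^2\ge \tfrac12\|G\sigma\|^2-\|a\|^2-\|b\|^2$ in $L^2([T-\delta,T]\times\OO)$, together with Assumption \ref{A3} (so $|\sigma(u(r,z))|^2\ge\sigma_0^2$) and the lower heat-kernel estimate one obtains from the spectral expansion of $G$, I would show
\begin{align*}
\int_{T-\delta}^T\int_\OO G_{T-r}^2(x,z)\sigma^2(u(r,z))\,\ud z\,\ud r\ge c\,\sigma_0^2\,\delta^{3/4}
\end{align*}
for all $\delta$ small, uniformly in $x$ (here the exponent $\tfrac34$ matches $\int_0^\delta\!\int_\OO G_s^2\,\ud z\,\ud s\asymp \delta^{3/4}$, already used in \eqref{u3p}).

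Next I would estimate the two correction terms on the short interval. For the drift correction, the boundedness of $f'$ (Assumption \ref{A4} is \emph{not} assumed here, so instead I use the Lipschitz bound on $f$ from Assumption \ref{A1}, which already gives the needed growth control; alternatively one works with $f'$ bounded only if available) and the kernel bound \eqref{DGtxy} give, via Minkowski and Cauchy--Schwarz in the measure $|\Delta G_{T-s}(x,y)|\,\ud y\,\ud s$,
\begin{align*}
\E\Big[\int_{T-\delta}^T\int_\OO |a_{r,z}|^2\,\ud z\,\ud r\Big]\le C\,\delta^{1/2}\,\sup_{t,x}\E\big[\|Du(t,x)\|_\cH^2\big]\cdot\Big(\sup_{s}\int_s^T\!\!\int_\OO|\Delta G_{T-s}|\,\ud y\Big),
\end{align*}
which one shows is $o(\delta^{3/4})$ (in fact $O(\delta^{1+})$ up to the weak singularity); the stochastic correction $b$ is handled identically using the Burkholder inequality for Hilbert-space-valued integrals and \eqref{Gtxy}, giving an $O(\delta^{3/2})$-type bound on $\E\big[\int_{T-\delta}^T\!\int_\OO|b_{r,z}|^2\big]$. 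The key input making this work is the uniform bound $\sup_{t,x}\|Du(t,x)\|_{L^p(\Omega;\cH)}<\infty$, which follows here from the $k=1$ case of Lemma \ref{lem:chimp0} (valid already under Assumption \ref{A1}, since only Lipschitz continuity of $f,\sigma$ is used for first-order Malliavin regularity).

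Putting these together, for a sufficiently small but fixed $\delta_0$ one gets a deterministic lower bound \emph{on average}; to upgrade to an almost-sure-type bound suitable for negative moments I would instead argue on the event where the correction terms are small. Precisely, by Chebyshev, $\mathbb P\big(\|a\|^2_{L^2([T-\delta,T]\times\OO)}+\|b\|^2_{L^2([T-\delta,T]\times\OO)}>\tfrac14 c\sigma_0^2\delta^{3/4}\big)\le C\delta^{1/2+\eta}/(\delta^{3/4})=C\delta^{\eta-1/4}$ for some $\eta>0$ — here one needs the correction estimates to beat $\delta^{3/4}$ with a margin, which the $\delta^{1+}$ and $\delta^{3/2}$ bounds comfortably provide. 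Hence there is $q>0$ with $\mathbb P\big(\|Du(T,x)\|_\cH^2\le \tfrac14 c\sigma_0^2\delta^{3/4}\big)\le C\delta^{q}$ for all small $\delta$, i.e.\ the distribution function of $\|Du(T,x)\|_\cH^2$ vanishes like a power at $0$. A standard computation $\E[\|Du(T,x)\|_\cH^{-2\rho}]=\int_0^\infty \rho\lambda^{-\rho-1}\mathbb P(\|Du(T,x)\|_\cH^2\le\lambda)\,\ud\lambda$ then converges provided $\rho<\tfrac43 q$, so choosing $\rho\in(0,\min\{1,\tfrac43 q\})$ yields \eqref{nonde}. The main obstacle I expect is the bookkeeping in the short-interval correction estimates: one must show they are genuinely of higher order than $\delta^{3/4}$ \emph{with a quantitative power gap}, which forces careful use of the weak-singularity integrals $\int_{T-\delta}^T (T-s)^{-3/4}\ud s\asymp\delta^{1/4}$ and of Lemma \ref{lem:chimp0} to absorb the $\|Du\|_\cH$ factors uniformly; the rest is routine.
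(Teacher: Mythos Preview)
Your strategy is correct and is exactly the standard route to the small-ball estimate $\mathbb P(\|Du(T,x)\|_\cH^2\le\varepsilon)\le\varepsilon^{p_0}$; the paper does not carry this out but simply quotes it from \cite[Proposition 3.2]{CH20} and then converts the tail bound into a negative-moment bound via the same layer-cake computation you sketch (written there as a sum $\sum n^{\rho-1}\mathbb P(\|Du\|_\cH^{-2}\ge n)$). So your approach is the self-contained version of the paper's black-box citation.

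There is, however, one genuine gap in your bookkeeping. For the stochastic correction $b$ you claim an $O(\delta^{3/2})$ bound using only $\sup_{t,x}\|Du(t,x)\|_{L^p(\Omega;\cH)}<\infty$ from Lemma \ref{lem:chimp0}. That input alone gives only
\[
\E\!\int_{T-\delta}^T\!\!\int_\OO |b_{r,z}|^2\,\ud z\,\ud r
\le C\!\int_{T-\delta}^T\!\!\int_\OO G_{T-s}^2(x,y)\Big(\int_{T-\delta}^{s}\!\!\int_\OO \E|D_{r,z}u(s,y)|^2\,\ud z\,\ud r\Big)\ud y\,\ud s
\le C\delta^{3/4},
\]
which is the \emph{same} order as the main term and kills the Chebyshev step. (The drift term $a$ does pick up the extra $\delta^{1/2}$ from Cauchy--Schwarz in the $|\Delta G|\,\ud y\,\ud s$ measure, so $O(\delta)$ is fine there; It\^o isometry for $b$ gives no such gain.) To reach $O(\delta^{3/2})$ you need the \emph{localized} estimate
\[
\sup_{s\in[T-\delta,T],\,y\in\OO}\ \E\!\int_{T-\delta}^{s}\!\!\int_\OO |D_{r,z}u(s,y)|^2\,\ud z\,\ud r\le C\delta^{3/4},
\]
which comes from feeding \eqref{Drzchi} back into itself on the short window $[T-\delta,s]$ and closing with Gr\"onwall. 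This is standard (and is part of what \cite[Proposition 3.2]{CH20} establishes), but it is a separate lemma, not a consequence of the global bound in Lemma \ref{lem:chimp0}. Once you add it, your argument goes through; the lower bound $\int_0^\delta\!\int_\OO G_s^2(x,z)\,\ud z\,\ud s\ge c(x)\delta^{3/4}$ you invoke also deserves a line of justification (via $\sin^2(jx)=\tfrac12(1-\cos 2jx)$ and a geometric-sum bound on the cosine part), since the constant depends on the distance of $x$ to $\partial\OO$.
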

\begin{proof}
To prove \eqref{nonde}, we need to use \cite[Proposition 3.2]{CH20}, which 
is summarized as follows: under Assumption \ref{A3},
if $x_i\in\OO$, $i=1,\ldots,d$, are distinct points, then for some $p_0>0$, there exists $\varepsilon_0=\varepsilon_0(p_0)$ such that for all $\varepsilon\in (0,\varepsilon_0)$, 
\begin{align}\label{C(t)}
\sup_{\xi\in\R^d,\|\xi\|=1}\mathbb P\left(\xi^\top \mathbb C(t)\xi\le \varepsilon\right)\le \varepsilon^{p_0},
\end{align}
where $\mathbb C(t):=(\langle Du(t,x_i),Du(t,x_j)\rangle_{\cH})_{1\le i,j\le d}$ denotes the Malliavin covariance matrix of $(u(t,x_1),\ldots,u(t,x_d))$ (the notation $u$ corresponds to $X_R$ in \cite{CH20}).

As a consequence of \eqref{C(t)} with $d=1$ and $t=T$, we have that for all $0<\varepsilon\le \varepsilon_0$, 
$\mathbb P(\|Du(T,x)\|^2_{\cH}\le\varepsilon)\le \varepsilon^{p_0},$
which implies that for any $ \rho< p_0$,
\begin{align*}
\sum_{n=1}^\infty n^{\rho-1}\mathbb P\left(\|Du(T,x)\|^{-2}_{\cH}\ge n\right)\le \sum_{n=1}^{\lfloor\varepsilon_0^{-1}\rfloor} n^{\rho-1}+\sum_{n= \lfloor\varepsilon_0^{-1}\rfloor+1}^\infty n^{\rho-1}n^{-p_0}\le C(\rho,\varepsilon_0).
\end{align*}
Then we have that for $0< \rho< \min\{p_0,1\}$ and $Z:=\|Du(T,x)\|^{-2}_{\cH}$,
\begin{align*}
\mathbb E\left[Z^{\rho}\right]&\le
1+\sum_{n=1}^\infty (n+1)^{\rho}\mathbb P(n\le Z< n+1)\le 2+\sum_{n=1}^\infty \left((n+1)^\rho-n^\rho\right)\mathbb P(Z\ge n)\\
&\le 2+\rho\sum_{n=1}^\infty n^{\rho-1}\mathbb P(Z\ge n)\le C(\rho,\varepsilon_0),
\end{align*}
which implies \eqref{nonde}. The proof is completed. 
\end{proof}

We are ready to give the main result of this section, which states that for $k\in\Z_n$, the density of the numerical solution $u^n(T,kh)$ exists and converges in $L^1(\R)$ to the density of the exact solution. 
The readers are referred to \cite{CC01,CH20} 
for the existence of the density $p_{u(t,x)}$ of the exact solution $u(t,x)$ for any $(t,x)\in[0,T]\times\OO$.

\begin{tho}\label{dTVR}
Suppose that Assumptions \ref{A1}, \ref{A4}, and \ref{A3} hold for $k=2$, and $u_0\in\C^3(\OO)$. Then for any $k\in\Z_n$, $u^n(T,kh)$ admits a density $p_{u^n(T,kh)}$, and moreover 
$$\lim_{n\rightarrow \infty}\|p_{u(T,kh)}-p_{u^n(T,kh)}\|_{L^1(\R)}=0.$$
\end{tho}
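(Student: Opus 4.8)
The plan is to identify $X_n := u^n(T,kh)$ and $X_\infty := u(T,kh)$ and to apply Proposition \ref{TVconvergence}, and then to translate the resulting total variation estimate into an $L^1(\R)$ estimate on densities via \eqref{pX-PX}. Concretely, once we know that $X_n\to X_\infty$ in $\D^{1,2}$ with a polynomial rate and that the hypotheses of Proposition \ref{TVconvergence} are satisfied, we obtain $\mathrm{d}_{\mathrm{TV}}(u^n(T,kh),u(T,kh))\to 0$, and since both random variables possess densities, \eqref{pX-PX} upgrades this to $\|p_{u^n(T,kh)}-p_{u(T,kh)}\|_{L^1(\R)}\to 0$.

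The first thing to check is that $u^n(T,kh)\in\D^{1,2}$ and that each $u^n(T,kh)$ admits a density. The membership $u^n(T,kh)\in\D^{1,2}$ is the standard Picard argument already recorded after Lemma \ref{lem:chimp0}. For the existence of the density I would invoke the Bouleau--Hirsch criterion (see, e.g., \cite{DN06}), for which it suffices to show $\|Du^n(T,kh)\|_\cH>0$ a.s. Here I would use that $u^n(T,kh)=U_k(T)$, where $U$ solves the finite-dimensional SDE \eqref{NCH} whose diffusion coefficient $\sqrt{n/\pi}\,\Sigma_n(U(\cdot))$ is diagonal with entries bounded below in absolute value by $\sqrt{n/\pi}\,\sigma_0>0$ under Assumption \ref{A3}; hence \eqref{NCH} is uniformly elliptic, its solution $U(T)$ has an absolutely continuous law on $\R^{n-1}$, the Malliavin covariance matrix of $U(T)$ is a.s.\ invertible, and in particular $\|Du^n(T,kh)\|_\cH>0$ a.s. I expect this discrete nondegeneracy --- transferring Assumption \ref{A3} to the level of the numerical scheme --- to be the only point requiring a little care; everything else is assembly of results already proved.

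It then remains to verify the remaining hypotheses of Proposition \ref{TVconvergence} for $X_\infty=u(T,kh)$. Lemma \ref{lem:chimp0} applied with $k=2$ gives $u(T,kh)\in\D^{2,\infty}\subset\D^{2,4}$, and Lemma \ref{lem:chimp} furnishes $\rho\in(0,1]$ with $\E[\|Du(T,kh)\|_\cH^{-2\rho}]<\infty$, so we may take $\alpha=2\rho\in(0,2]$. The convergence $u^n(T,kh)\to u(T,kh)$ in $\D^{1,2}$ follows by combining Theorem \ref{eq.strong-trun} and Proposition \ref{eq.strong-trun-D}, which give $\|u^n(T,kh)-u(T,kh)\|_{1,2}^2=\E[|u^n(T,kh)-u(T,kh)|^2]+\E[\|Du^n(T,kh)-Du(T,kh)\|_\cH^2]\le Cn^{-2}$. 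Proposition \ref{TVconvergence} then yields $\mathrm{d}_{\mathrm{TV}}(u^n(T,kh),u(T,kh))\le c\,\|u^n(T,kh)-u(T,kh)\|_{1,2}^{\alpha/(\alpha+2)}\le Cn^{-\alpha/(\alpha+2)}\to 0$. Finally, using that $u(T,kh)$ admits the density $p_{u(T,kh)}$ (by \cite{CC01,CH20}, or equivalently by Lemma \ref{lem:chimp} together with Bouleau--Hirsch) and the already established existence of $p_{u^n(T,kh)}$, identity \eqref{pX-PX} gives $\|p_{u^n(T,kh)}-p_{u(T,kh)}\|_{L^1(\R)}=\mathrm{d}_{\mathrm{TV}}(u^n(T,kh),u(T,kh))\to 0$, which is the claim.
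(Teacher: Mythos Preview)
Your proposal is correct and follows essentially the same route as the paper: existence of the density of $u^n(T,kh)$ via the uniform ellipticity of the finite-dimensional SDE \eqref{NCH} (the paper cites \cite[Theorem 2.3.3]{DN06} for this, which is precisely the elliptic-diffusion absolute continuity result you invoke), membership $u(T,kh)\in\D^{2,4}$ from Lemma \ref{lem:chimp0}, the negative-moment bound from Lemma \ref{lem:chimp} with $\alpha=2\rho$, and $\D^{1,2}$-convergence from Theorem \ref{eq.strong-trun} and Proposition \ref{eq.strong-trun-D}, followed by Proposition \ref{TVconvergence} and \eqref{pX-PX}. Your version is slightly more explicit in that it records the quantitative rate $\mathrm{d}_{\mathrm{TV}}\le Cn^{-\alpha/(\alpha+2)}$, which the paper leaves implicit.
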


\begin{proof}
 By \cite[Theorem 2.3.3]{DN06} and \eqref{NCH}, we obtain that under Assumption \ref{A3},  for any $t\in(0,T]$, the law of $U(t)$ is absolutely continuous with respect to the Lebesgue measure on $\R^{n-1}$. Thus, $\{u^n(T,kh)\}_{k\in\Z_n}$ admits a density. Theorem \ref{eq.strong-trun}, Lemma \ref{lem:chimp0}, Proposition \ref{eq.strong-trun-D}, and
 Lemma \ref{lem:chimp} indicate that the conditions of 
Proposition \ref{TVconvergence}  are fulfilled for $\alpha=2\rho$, $X_n=u^n(T,kh)$ and $X_\infty=u(T,kh)$. As a result,
$$\lim_{n\rightarrow \infty}\mathrm{d}_{\mathrm{TV}}(u(T,kh),u^n(T,kh))=0,$$
which together to \eqref{pX-PX} completes the proof.
\end{proof}
\section{Full discretization}\label{S5}

For the purpose of effective computation, we combine the spatial FDM with a temporal exponential Euler method to obtain the full discretization of Eq.\ \eqref{CH}, and give the strong convergence rate of the fully discrete numerical solution in this section.

Let $\{t_i=i\tau,i=0,1,\ldots,m\}$ (with $m\ge2$) be a uniform partition of $[0,T]$, where $\tau:=T/m$  is the uniform time stepsize. Denote by
$\eta_m(s)=\tau\lfloor s/\tau\rfloor$ the largest time grid point smaller than $s$. 
By replacing $s$ in \eqref{unR} by $\eta_m(s)$, we obtain the full discretization $u^{m,n}=\{u^{m,n}(t,x); (t,x)\in[0,T]\times\OO\}$ given by
\begin{align}\label{unRF}\notag
u^{m,n}(t,x)&=\int_{\mathcal O}G^n_{t}(x,y)u_0(\kappa_n(y))\ud y\\\notag
&\quad+\int_{0}^{t}\int_{\mathcal O}\Delta_nG^n_{t-\eta_m(s)}(x,y)f(u^{m,n}(\eta_m(s),\kappa_n(y)))\ud y\ud s\\
&\quad+\int_{0}^{t}\int_{\mathcal O}G^n_{t-\eta_m(s)}(x,y)\sigma(u^{m,n}(\eta_m(s),\kappa_n(y)))W(\ud s,\ud y).
\end{align}
The discrete Green function $G^n$ satisfies the following estimates. 
\begin{lem}\label{Gn-regularity}
Let $\gamma\in(0,\frac{3}{8})$. Then for any $x,y\in\mathcal O$ and $s,t\in[0,T]$ with $s<t$,
\begin{gather*}
\int_0^t\int_{\mathcal O}|G^n_{t-r}(x,z)-G^n_{t-r}(y,z)|^2\ud z\ud r\le C|x-y|^2,\\
\int_0^s\int_{\mathcal O}|G^n_{t-r}(x,z)-G^n_{s-r}(x,z)|^2\ud z\ud r+\int_s^t\int_{\mathcal O}|G^n_{t-r}(x,z)|^2\ud z\ud r\le C_\gamma|t-s|^{2\gamma}.
\end{gather*}
\end{lem}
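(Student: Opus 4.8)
## Proof proposal for Lemma \ref{Gn-regularity}

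The plan is to mimic the proof of Lemma \ref{Greg} for the continuous Green function, but working with the explicit spectral representation of $G^n$ and exploiting the uniform two-sided bound $\frac{4}{\pi^2}\le c_{j,n}\le 1$ on the eigenvalue multipliers. Write $G^n_t(x,z)=\sum_{j=1}^{n-1}e^{-\lambda_{j,n}^2 t}\phi_{j,n}(x)\phi_j(\kappa_n(z))$, and use that $\{\phi_j(\kappa_n(\cdot))\}_{j=1}^{n-1}$ is orthogonal in $L^2(\OO)$ with $\|\phi_j(\kappa_n(\cdot))\|_{L^2}^2=1$ by \eqref{orth}. Then any of the $L^2$-in-space integrals appearing in the statement collapses to a sum over $j$ of the form $\sum_{j=1}^{n-1} e^{-2\lambda_{j,n}^2 r}\,|\text{(coefficient in }x)|^2$, and since $\lambda_{j,n}^2=j^4 c_{j,n}^2\ge (4/\pi^2)^2 j^4$, each such sum is dominated by the corresponding continuous sum with $j^4$ in the exponent, up to constants.

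For the spatial increment bound, I would estimate
\begin{align*}
\int_0^t\int_\OO |G^n_{t-r}(x,z)-G^n_{t-r}(y,z)|^2\ud z\ud r
=\int_0^t\sum_{j=1}^{n-1} e^{-2\lambda_{j,n}^2(t-r)}|\phi_{j,n}(x)-\phi_{j,n}(y)|^2\ud r.
\end{align*}
Using $|\phi_{j,n}(x)-\phi_{j,n}(y)|\le Cj|x-y|$ (which follows since $\phi_{j,n}$ is the polygonal interpolant of $\phi_j$ and $|\phi_j'|\le Cj$), performing the $r$-integral to gain a factor $\lambda_{j,n}^{-2}\le Cj^{-4}$, and summing $\sum_j j^2\cdot j^{-4}<\infty$, gives the bound $C|x-y|^2$. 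For the temporal pieces, the second integral $\int_s^t\int_\OO|G^n_{t-r}(x,z)|^2\ud z\ud r=\int_s^t\sum_j e^{-2\lambda_{j,n}^2(t-r)}\phi_{j,n}(x)^2\ud r$ is handled exactly as in \eqref{eq.Green}: bound $\phi_{j,n}(x)^2\le C$, use \eqref{ex<x} to write $e^{-2\lambda_{j,n}^2(t-r)}\le C_\rho(\lambda_{j,n}^2(t-r))^{-\rho}\le C j^{-4\rho}(t-r)^{-\rho}$ for a suitable $\rho\in(0,1)$, choose $\rho$ so that $\sum_j j^{-4\rho}$ behaves appropriately and $\int_s^t(t-r)^{-\rho}\ud r\le C|t-s|^{1-\rho}$, and adjust exponents so the resulting power of $|t-s|$ is at least $2\gamma$ for the given $\gamma\in(0,\frac38)$ (taking $\rho$ close to $\frac14$ yields exponent close to $\frac34>2\gamma$). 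For the first temporal integral, split $G^n_{t-r}(x,z)-G^n_{s-r}(x,z)=\sum_j e^{-\lambda_{j,n}^2(s-r)}(1-e^{-\lambda_{j,n}^2(t-s)})\phi_{j,n}(x)\phi_j(\kappa_n(z))$, apply orthogonality, then use \eqref{1-ex} to bound $(1-e^{-\lambda_{j,n}^2(t-s)})\le C_{\alpha_2}(\lambda_{j,n}^2(t-s))^{\alpha_2}$ with $\alpha_2\in(0,1]$, combine with \eqref{ex<x} on the remaining factor $e^{-2\lambda_{j,n}^2(s-r)}$, and integrate in $r$ over $[0,s]$; choosing the exponents as in \eqref{eq.Green1} (i.e.\ $\rho$ slightly above $\frac{13}{8}$ scaled down, with $\alpha_2=2\gamma$) gives the bound $C_\gamma|t-s|^{2\gamma}$.

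The main obstacle, as in the continuous case, is the careful bookkeeping of exponents: one must choose the auxiliary parameter $\rho$ (the power extracted from the exponential via \eqref{ex<x}) simultaneously large enough that $\sum_j j^{\text{power}}$ converges and small enough that $\int (t-r)^{-\rho}$ is integrable and leaves a temporal power $\ge 2\gamma$; the constraint $\gamma<\frac38$ is precisely what makes such a choice possible, and it is worth noting that the proof would not work with $2\gamma$ replaced by anything larger than $\frac34$. A minor technical point is to justify the uniform bounds $\phi_{j,n}(x)^2\le C$ and $|\phi_{j,n}(x)-\phi_{j,n}(y)|\le Cj|x-y|$ uniformly in $n$ and $j\in\Z_n$, which follow from the definition of the polygonal interpolation together with $|\phi_j|\le\sqrt{2/\pi}$ and $|\phi_j'|\le j\sqrt{2/\pi}$. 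Everything else is a routine repetition of the estimates already carried out in Lemmas \ref{Greg} and \ref{lem:Gn-G}.
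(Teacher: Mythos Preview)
Your proposal is correct and follows essentially the same route as the paper: orthogonality of $\{\phi_j\circ\kappa_n\}_{j\in\Z_n}$ in $L^2(\OO)$, the Lipschitz bound $|\phi_{j,n}(x)-\phi_{j,n}(y)|\le Cj|x-y|$, and the inequalities \eqref{ex<x}, \eqref{1-ex}. The only simplification the paper makes is that for the two temporal terms it integrates the $r$-variable \emph{exactly} rather than invoking \eqref{ex<x} with an auxiliary exponent $\rho$---namely $\int_0^s e^{-2\lambda_{j,n}^2(s-r)}\ud r=(1-e^{-2\lambda_{j,n}^2 s})/(2\lambda_{j,n}^2)\le \tfrac12\lambda_{j,n}^{-2}$ and $\sum_{j}e^{-2\lambda_{j,n}^2 t}\le Ct^{-1/4}$ via comparison with $\int_0^\infty e^{-z^4}\ud z$---which removes the exponent bookkeeping and makes the threshold $\gamma<\tfrac38$ (equivalently $\sum_j\lambda_{j,n}^{4\gamma-2}<\infty$) appear immediately.
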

\begin{proof}
The proof is similar to that of \cite[Lemma 1.8]{CC01}.
It can be verified that
\begin{equation}\label{varphixy}
|\phi_{j,n}(x)-\phi_{j,n}(y)|\le \sqrt{2\pi^{-1}}j|x-y|.
\end{equation}
A combination of \eqref{orth} and \eqref{varphixy} implies
\begin{align*}
\int_0^t\int_{\mathcal O}|G^n_{t-r}(x,z)-G^n_{t-r}(y,z)|^2\ud z\ud r\le&\sum_{j=1}^{n-1}\frac{1}{2\lambda_{j,n}^2}|\phi_{j,n}(x)-\phi_{j,n}(y)|^2\le C|x-y|^2.
\end{align*}
By the uniform boundedness of $\phi_{j,n}$ and \eqref{orth},
\begin{align*}
\int_{\mathcal O}|G^n_{t}(x,z)|^2\ud z=\sum_{j=1}^{n-1}\exp(-2\lambda_{j,n}^2 t)|\phi_{j,n}(x)|^2
\le Ct^{-\frac{1}{4}}\int_0^\infty e^{-z^4}\ud z\le Ct^{-\frac{1}{4}},
\end{align*}
which indicates
$\int_s^t\int_{\mathcal O}|G^n_{t-r}(x,z)|^2\ud z\ud r\le C|t-s|^{\frac 34}$. 
Using \eqref{orth} and \eqref{1-ex}, we obtain
\begin{align}\label{Gnt-Gns}\notag
\int_0^s\int_{\mathcal O}|G^n_{t-r}(x,z)-G^n_{s-r}(x,z)|^2\ud z\ud r
&= \sum_{j=1}^{n-1}|1-\exp(-\lambda_{j,n}^2(t-s))|^2\frac{1-\exp(-2\lambda_{j,n}^2s)}{2\lambda_{j,n}^2}|\phi_{j,n}(x)|^2\\
&\le C\sum_{j=1}^{n-1}\lambda_{j,n}^{4\gamma-2}(t-s)^{2\gamma}\le C(t-s)^{2\gamma},
\end{align}
where $\gamma<\frac{3}{8}$.
The proof is completed.
\end{proof}

In a similar way, one can prove the following lemma. 
\begin{lem}\label{DeltaGn-regularity}
Let $\alpha\in(0,1)$. Then for any $x,y\in\mathcal O$ and $s,t\in[0,T]$ with $s<t$,
\begin{gather}\label{Gn1-1}
\int_s^t\int_{\mathcal O}|\Delta_n G^n_{t-r}(x,z)|\ud z\ud r\le C|t-s|^{\frac{3\alpha}{8}},\\\label{Gn2-1}
\int_0^s\int_{\mathcal O}|\Delta_nG^n_{t-r}(x,z)-\Delta_nG^n_{s-r}(y,z)|\ud z\ud r\le C(\alpha)(|x-y|+|t-s|^{\frac{3\alpha}{8}}).
\end{gather}
\end{lem}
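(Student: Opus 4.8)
The plan is to mimic the computations already carried out for the exact Green function $G$ in Lemma~\ref{Holder-exact} (specifically \eqref{eq.Green}, \eqref{eq.Green1}, and \eqref{DeltaGG}), replacing the eigenvalues $\lambda_j=-j^2$ and eigenfunctions $\phi_j$ by their discrete counterparts $\lambda_{j,n}=-j^2c_{j,n}$ and $\phi_{j,n}$. The two key facts that make this transfer work are the uniform two-sided bound $\frac{4}{\pi^2}\le c_{j,n}\le 1$ for $j\in\Z_n$, which guarantees $j^4c_{j,n}^2\ge \tfrac{16}{\pi^4}j^4$, and the Lipschitz-type estimate \eqref{varphixy} on the interpolated eigenfunctions. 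Crucially, all the sums in the estimates below run only over $j=1,\dots,n-1$, so convergence of the $j$-series is automatic; the only thing to control is the time singularity, exactly as in Section~\ref{S2}.

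For \eqref{Gn1-1}, I would use the Cauchy--Schwarz inequality in $z$ together with $\int_\OO|\Delta_n G^n_s(x,z)|^2\ud z=\sum_{j=1}^{n-1}\lambda_{j,n}^2 e^{-2\lambda_{j,n}^2 s}\phi_j(\kappa_n(z))^2$-type orthogonality (more precisely, the orthonormality \eqref{orth} of $\{e_j\}$ and boundedness of $\phi_{j,n}$), obtaining
\begin{align*}
\int_s^t\int_\OO|\Delta_n G^n_{t-r}(x,z)|\ud z\ud r
\le C\int_s^t\Big(\sum_{j=1}^{n-1}j^4c_{j,n}^2 e^{-2\lambda_{j,n}^2(t-r)}\Big)^{\frac12}\ud r.
\end{align*}
Applying \eqref{ex<x} with exponent $\rho$ gives $\sum_j j^4 c_{j,n}^2 e^{-2\lambda_{j,n}^2(t-r)}\le C\sum_j j^{4-4\rho}c_{j,n}^{2-2\rho}(t-r)^{-\rho}$; using $\frac{4}{\pi^2}\le c_{j,n}\le1$ to bound $c_{j,n}^{2-2\rho}$ by a constant (and the finite sum over $j\le n-1$) yields a bound $C(t-r)^{-\rho/2}$ that is integrable when $\rho<2$, producing $C(t-s)^{1-\rho/2}$. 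Choosing $\rho$ close to $\tfrac{13}{4}\wedge 2$ — here the best temporal exponent attainable is $1-\rho/2$ with $\rho<2$, so we can reach any exponent $<\tfrac{1}{2}$, in particular $\tfrac{3\alpha}{8}$ for $\alpha\in(0,1)$ — finishes this part.

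For \eqref{Gn2-1}, I would split the difference $\Delta_n G^n_{t-r}(x,z)-\Delta_n G^n_{s-r}(y,z)$ via the triangle inequality into a spatial increment $\Delta_n G^n_{t-r}(x,z)-\Delta_n G^n_{t-r}(y,z)$ and a temporal increment $\Delta_n G^n_{t-r}(y,z)-\Delta_n G^n_{s-r}(y,z)$, exactly as the proof of \eqref{eq.Green1} separates the two contributions. The spatial part is handled using $|\phi_{j,n}(x)-\phi_{j,n}(y)|\le \sqrt{2/\pi}\,j|x-y|$ from \eqref{varphixy}, giving $\int_\OO|\Delta_n G^n_{s}(x,z)-\Delta_n G^n_{s}(y,z)|^2\ud z\le C|x-y|^2\sum_j j^{6}c_{j,n}^2 e^{-2\lambda_{j,n}^2 s}$, then \eqref{ex<x} with $\rho\in(\tfrac74,2)$ and Cauchy--Schwarz in $z$ and $r$ exactly as in \eqref{DeltaGG} to obtain the $C|x-y|$ bound. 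The temporal part uses the factorization $e^{-\lambda_{j,n}^2(s-r)}|1-e^{-\lambda_{j,n}^2(t-s)}|$, bounds $|1-e^{-\lambda_{j,n}^2(t-s)}|\le C(\lambda_{j,n}^2(t-s))^{\alpha_3}$ by \eqref{1-ex}, and then mirrors \eqref{eq.Green1}: the resulting $j$-series $\sum_j j^{6-4\rho+8\alpha_3}c_{j,n}^{2-2\rho+4\alpha_3}$ is a finite sum (hence bounded), and $\int_0^s(s-r)^{-\rho/2}\ud r<\infty$ for $\rho<2$, yielding $C(t-s)^{\alpha_3}$ for any $\alpha_3<\tfrac38$. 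Adding the two contributions gives \eqref{Gn2-1}. The main obstacle — though a mild one — is simply bookkeeping the exponents of $c_{j,n}$: one must verify that every power of $c_{j,n}$ that appears is bounded above and below by positive constants uniformly in $j\le n-1$ and $n$, so that these factors can be absorbed into $C$; this is exactly what \eqref{cjn} and the bound $\frac{4}{\pi^2}\le c_{j,n}\le1$ provide, and it is the only place the argument differs from the continuous case.
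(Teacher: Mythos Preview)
Your approach is essentially the paper's: its proof says explicitly that \eqref{Gn1-1} and \eqref{Gn2-1} are obtained by repeating the computations \eqref{eq.Green}, \eqref{eq.Green1}, and \eqref{DeltaGG} using the discrete orthogonality \eqref{orth} and Cauchy--Schwarz, which is precisely what you propose.

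One caution, though. Your repeated claim that ``the sums run only over $j=1,\dots,n-1$, so convergence of the $j$-series is automatic'' (and later ``finite sum, hence bounded'') is a wrong justification. The constant $C$ in the lemma must be independent of $n$ --- otherwise Proposition~\ref{Holder-un} and Theorem~\ref{Err} would not follow --- so you still need to choose $\rho$ (and $\alpha_3$) so that the $j$-series is bounded uniformly in $n$: for \eqref{Gn1-1} this forces $\rho>5/4$, and for the temporal piece of \eqref{Gn2-1} it forces $4-4\rho+8\alpha_3<-1$ (your $j$-exponent there should be $4-4\rho+8\alpha_3$, not $6-4\rho+8\alpha_3$). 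These constraints are compatible with the target exponent $\tfrac{3\alpha}{8}<\tfrac38$, so the proof still goes through with the right parameter choices; but your side remark that one could reach any temporal exponent $<\tfrac12$ in \eqref{Gn1-1} is false precisely because of this uniformity requirement.
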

\begin{proof}
By using the orthogonality of $\{\phi_j\circ\kappa_n\}_{j\in\mathbb Z_n}$ and the Cauchy--Schwarz inequality, \eqref{Gn1-1} and \eqref{Gn2-1} with $x=y$ and \eqref{Gn2-1} with $t=s$ can be obtained similarly as in \eqref{eq.Green}, \eqref{eq.Green1} and \eqref{DeltaGG}, respectively.
\end{proof}

\begin{prop}\label{Holder-un}
Let Assumption \ref{A1} hold and $u_0\in\mathcal C^2(\mathcal O)$. Then for any $\alpha\in(0,1)$ and $p\ge1$, there exists $C=C(p,T,\alpha)$ such that for any $(t,x)\in[0,T]\times\OO$,
\begin{align*} 
\|u^n(t,x)-u^n(s,y)\|_{p}\le C(|t-s|^{\frac{3\alpha}{8}}+C|x-y|).
\end{align*}
\end{prop}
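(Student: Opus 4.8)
The plan is to establish the two-parameter H\"older estimate for $u^n$ by mimicking the proof of Lemma~\ref{Holder-exact}, but now the ``Green function'' is the discrete kernel $G^n$, for which Lemmas~\ref{Gn-regularity} and \ref{DeltaGn-regularity} supply exactly the regularity bounds we need. First I would record a uniform moment bound $\sup_{t,x}\|u^n(t,x)\|_{q}\le C(q,T)$ for every $q\ge 1$: this follows by splitting $u^n=u_1^n+u_2^n+u_3^n$ as in \eqref{unR}, estimating the deterministic drift term via the bound $\int_0^t\int_\OO|\Delta_n G^n_{t-s}(x,y)|\,\ud y\,\ud s\le C$ coming from \eqref{Gn1-1} together with the linear growth bound $|f(a)|\le K|a|+|f(0)|$ from Assumption~\ref{A1}, estimating the stochastic term via the Burkholder inequality and $\int_0^t\int_\OO|G^n_{t-s}(x,y)|^2\,\ud y\,\ud s\le Ct^{3/4}$ from Lemma~\ref{Gn-regularity}, and handling $u_1^n$ by $\sup_{x}|u_1^n(t,x)|\le C\|u_0\|_{\C(\OO)}$ (which uses $\int_\OO|G^n_t(x,y)|\,\ud y\le C$). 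A Gronwall argument with weak singularity, exactly as in the proof of Theorem~\ref{eq.strong-trun}, closes the moment estimate. Alternatively one may simply invoke Theorem~\ref{eq.strong-trun} and the moment bound \eqref{eq:bound-e} for $u$, since $\|u^n(t,x)\|_p\le \|u^n(t,x)-u(t,x)\|_p+\|u(t,x)\|_p\le C$.

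Next I would decompose the increment. Assuming without loss of generality $s<t$, write, in parallel with Lemma~\ref{Holder-exact},
\begin{align*}
u^n(t,x)-u^n(t,y)&=\big(u_1^n(t,x)-u_1^n(t,y)\big)+I_f^n+I_\sigma^n,\\
u^n(t,x)-u^n(s,x)&=\big(u_1^n(t,x)-u_1^n(s,x)\big)+J_f^{n,-}+J_f^n+J_\sigma^{n,-}+J_\sigma^n,
\end{align*}
where $I_f^n,I_\sigma^n,J_f^{n,\pm},J_\sigma^{n,\pm}$ are the obvious discrete analogues of the terms in the proof of Lemma~\ref{Holder-exact}, with $G$ replaced by $G^n$, $\Delta G$ by $\Delta_n G^n$, and $u(s,y)$ by $u^n(s,\kappa_n(y))$. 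The initial-data terms: arguing as in \eqref{eq.u1tilde}–\eqref{un0Holder}, one has $u_1^n(t,x)-u_1^n(0,x)=-\int_0^t\int_\OO\Delta_n G^n_r(x,z)\Delta_n u_0(z)\,\ud z\,\ud r$, so using \eqref{Gn1-1} and the boundedness of $\Delta_n u_0$ (here $u_0\in\C^2(\OO)$ suffices), together with the Lipschitz bound \eqref{un0Holder}, gives $|u_1^n(t,x)-u_1^n(t,y)|+|u_1^n(t,x)-u_1^n(s,x)|\le C(|t-s|^{3\alpha/8}+|x-y|)$. The stochastic terms $I_\sigma^n$, $J_\sigma^{n,\pm}$, $J_\sigma^n$ are controlled by the Burkholder inequality, the boundedness of $\sigma$, and Lemma~\ref{Gn-regularity}, yielding $\|I_\sigma^n\|_p^2+\|J_\sigma^{n,-}\|_p^2+\|J_\sigma^n\|_p^2\le C(|t-s|^{2\gamma}+|x-y|^2)$ with $\gamma\in(0,3/8)$; choosing $\gamma=3\alpha/8$ (legitimate since $\alpha<1$) gives the claimed exponent. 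The drift terms $I_f^n$, $J_f^{n,-}$, $J_f^n$: by the linear growth of $f$ and the uniform moment bound just established, $\sup_{r,z}\|f(u^n(r,\kappa_n(z)))\|_p\le C$, so $\|I_f^n\|_p\le C\int_0^t\int_\OO|\Delta_n G^n_{t-r}(x,z)-\Delta_n G^n_{t-r}(y,z)|\,\ud z\,\ud r$, $\|J_f^n\|_p\le C\int_s^t\int_\OO|\Delta_n G^n_{t-r}(x,z)|\,\ud z\,\ud r$, and $\|J_f^{n,-}\|_p\le C\int_0^s\int_\OO|\Delta_n G^n_{t-r}(x,z)-\Delta_n G^n_{s-r}(x,z)|\,\ud z\,\ud r$, all of which are $\le C(|x-y|+|t-s|^{3\alpha/8})$ by Lemma~\ref{DeltaGn-regularity} (the spatial estimate for $I_f^n$ being the $t=s$ case of \eqref{Gn2-1}). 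Summing these bounds and using $|x-y|\le C|x-y|$ yields \eqref{eq:Holde-e} with $u$ replaced by $u^n$.

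The main obstacle is essentially bookkeeping rather than a genuine new difficulty: one must make sure that every Green-function estimate invoked in the proof of Lemma~\ref{Holder-exact} has a discrete counterpart of the \emph{same} order, and Lemmas~\ref{Gn-regularity} and \ref{DeltaGn-regularity} have been set up precisely for that. The one point requiring a little care is that the arguments $\kappa_n(y)$ appear inside $f$ and $\sigma$ and inside the second slot of $G^n$; since $G^n_{t}(x,\kappa_n(y))$ is already built from $\phi_j(\kappa_n(y))$ (see the definition of $G^n$) and the orthogonality relation \eqref{orth} is stated in terms of $\phi_j\circ\kappa_n$, no extra error is introduced — the discrete $L^2$-in-$y$ computations go through verbatim with $\{\phi_j\circ\kappa_n\}_{j\in\Z_n}$ in place of $\{\phi_j\}_{j\ge1}$. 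Once all pieces are assembled, combining the initial-data, drift, and stochastic estimates completes the proof.
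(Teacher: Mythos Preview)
Your proposal is correct and follows essentially the same route as the paper: obtain a uniform moment bound for $u^n$, treat the initial-data term via the representation \eqref{eq.u1tilde} and Lemma~\ref{DeltaGn-regularity}, and then run the decomposition of Lemma~\ref{Holder-exact} with $G^n$ in place of $G$, invoking Lemmas~\ref{Gn-regularity} and \ref{DeltaGn-regularity} exactly as you describe. One small caution: the bound $\int_\OO|G^n_t(x,y)|\,\ud y\le C$ uniformly in $n$ and $t$ is not established in the paper (only the pointwise estimate \eqref{GnGn}, which blows up at $t=0$), so for the boundedness of $u_1^n$ it is safer to use the representation \eqref{eq.u1tilde} together with \eqref{Gn1-1} and $\|\Delta_n u_0\|_\infty\le C$, which you already invoke later; also note that your alternative shortcut via Theorem~\ref{eq.strong-trun} formally requires $u_0\in\C^3(\OO)$ rather than the $\C^2$ assumed here (the paper's own proof takes this shortcut too), so your self-contained Gronwall argument is the cleaner option under the stated hypotheses.
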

\begin{proof}
As a result of Theorem \ref{eq.strong-trun} and Lemma \ref{Holder-e}, for $n\ge 2$ and $(t,x)\in[0,T]\times\OO$,
\begin{align}\label{eq.unbound}
\|u^n(t,x)\|_{p}\le \|u^n(t,x)-u(t,x)\|_{p}+\|u(t,x)\|_{p}
\le C(p,T,K).
\end{align}
Recall that $\tilde u^n_1(t,x):=\int_{\mathcal O}G^n_t(x,y)u_0(\kappa_n(y))\ud y$. It follows from \eqref{eq.u1tilde} and \eqref{un0Holder} that 
\begin{align*}
|\tilde u^n_1(t,x)-\tilde u^n_1(t,y)|&\le C|x-y|+
\int_0^t\int_\OO |\Delta_n G^n_r(x,z)-\Delta_n G^n_r(y,z)||\Delta_n u_0(z)|\ud z\ud r.
\end{align*}
Since $u_0\in \mathcal C^2(\OO)$, $|\Delta_n u_0(z)|\le C$ for $z\in\OO$, and hence \eqref{Gn2-1} implies $|\tilde u^n_1(t,x)-\tilde u^n_1(t,y)|\le C|x-y|.$
Similarly, based on \eqref{eq.u1tilde}, \eqref{Gn1-1}, and \eqref{Gn2-1}, one also has that
for any $\alpha\in(0,1)$, $|\tilde u^n_1(t,x)-\tilde u^n_1(s,x)|\le C|t-s|^{\frac{3\alpha}{8}}.$
Based on a standard argument as in the proof of Lemma \ref{Holder-exact}, 
it follows from \eqref{eq.unbound} and Lemmas \ref{Gn-regularity} and \ref{DeltaGn-regularity} that
$\|u^n(t,x)-\tilde u^n_1(t,x)-u^n(s,y)+\tilde u^n_1(s,y)\|_p\le C(|t-s|^{\frac{3\alpha}{8}}+C|x-y|)$. The proof is completed.
\end{proof}

\begin{tho}\label{Err} 
Suppose that Assumption \ref{A1} holds and $u_0\in\C^3(\OO)$. Then for every $p\ge1$ and $0<\epsilon\ll 1$, there exists some constant $C=C(p,T,K,\epsilon)$ such that for any $(t,x)\in[0,T]\times\OO$,
\begin{align*}
\|u^{m,n}(t,x)-u(t,x)\|_{p}
\le C(n^{-1}+m^{-\frac{3-\epsilon}{8}}).
\end{align*}
\end{tho}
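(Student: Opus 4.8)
The plan is to combine the spatial error bound from Theorem~\ref{eq.strong-trun} with a separate estimate of the temporal discretization error $\|u^{m,n}(t,x)-u^n(t,x)\|_p$, so that by the triangle inequality it suffices to prove
\begin{align}\label{eq:tempsplit}
\|u^{m,n}(t,x)-u^n(t,x)\|_p\le C(p,T,K,\epsilon)\,m^{-\frac{3-\epsilon}{8}}.
\end{align}
First I would subtract \eqref{unRF} from \eqref{unR} and split the difference into a ``drift'' part and a ``noise'' part. In each part, insert and subtract the intermediate term in which the Green kernel is already evaluated at $t-\eta_m(s)$ but the integrand is still $u^n(\eta_m(s),\kappa_n(y))$ versus $u^{m,n}(\eta_m(s),\kappa_n(y))$. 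This produces, schematically, a kernel-perturbation term (kernel at $t-s$ versus kernel at $t-\eta_m(s)$, acting on $u^n$), a time-freezing term of $u^n$ itself (comparing $u^n(s,\cdot)$ to $u^n(\eta_m(s),\cdot)$), and a recursive term in $e^{m,n}(s,x):=u^{m,n}(s,x)-u^n(s,x)$ that will be absorbed by the singular Gronwall lemma \cite[Lemma 3.4]{GI98}, exactly as in the proof of Theorem~\ref{eq.strong-trun}, using the kernel bounds \eqref{GnGn}.

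The time-freezing term is where the rate $\frac{3}{8}$ comes from: since $|s-\eta_m(s)|\le\tau=T/m$, Proposition~\ref{Holder-un} gives $\|u^n(s,\kappa_n(y))-u^n(\eta_m(s),\kappa_n(y))\|_p\le C(\alpha)\,\tau^{\frac{3\alpha}{8}}$ for any $\alpha\in(0,1)$, and integrating against $|\Delta_n G^n_{t-s}(x,y)|\,\ud y\,\ud s$ (for the drift part) or against $|G^n_{t-s}(x,y)|^2\,\ud y\,\ud s$ (for the noise part, after the Burkholder inequality) yields a bound $C(\alpha)\tau^{\frac{3\alpha}{8}}$; choosing $\alpha=1-\frac{\epsilon}{3}$ gives the exponent $\frac{3-\epsilon}{8}$. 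For the kernel-perturbation term one writes $G^n_{t-s}-G^n_{t-\eta_m(s)}$ and $\Delta_n G^n_{t-s}-\Delta_n G^n_{t-\eta_m(s)}$ spectrally and estimates them via \eqref{1-ex}, \eqref{ex<x} and the bounds $\frac{4}{\pi^2}\le c_{j,n}\le1$ in the same spirit as Lemmas~\ref{Gn-regularity} and \ref{DeltaGn-regularity}; since $t-s$ and $t-\eta_m(s)$ differ by at most $\tau$, this contributes $O(\tau^{\frac{3\alpha}{8}})$ as well, again uniformly in $n$ because every constant in those lemmas is independent of the stepsizes. Finally, the uniform moment bound \eqref{eq.unbound} controls the $L^p$-norms of $u^n$ that appear as multipliers, and the boundedness and Lipschitz continuity of $\sigma$, together with Assumption~\ref{A1} on $f$, handle the nonlinear coefficients.

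After collecting these three contributions into a Gronwall-type inequality
\begin{align*}
\sup_x\|e^{m,n}(t,x)\|_p^2\le C(\epsilon)\,m^{-\frac{3-\epsilon}{4}}+C_\epsilon\int_0^t(t-s)^{-\frac34-\epsilon}\sup_x\|e^{m,n}(s,x)\|_p^2\,\ud s,
\end{align*}
the weakly singular Gronwall lemma yields \eqref{eq:tempsplit}, and adding the spatial bound $\|u^n(t,x)-u(t,x)\|_p\le Cn^{-1}$ from Theorem~\ref{eq.strong-trun} completes the proof. The main obstacle I expect is the kernel-perturbation term: one must verify that the spectral estimate for $\Delta_n G^n$ differences in time, integrated over $[0,t]$, does not lose the rate and stays uniform in $n$ — this is the analogue of the delicate estimate \eqref{eq.Green1} but now with the discrete Laplacian $\Delta_n$, its eigenvalues $\lambda_{j,n}=-j^2c_{j,n}$, and the interpolated eigenfunctions $\phi_{j,n}$, so one has to be careful that the sums $\sum_{j=1}^{n-1}$ converge with constants independent of $n$ by choosing the auxiliary exponent $\rho$ in the range dictated by \eqref{eq.Green1}. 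Everything else is a routine adaptation of the arguments already used for Theorem~\ref{eq.strong-trun} and Proposition~\ref{eq.strong-trun-D}.
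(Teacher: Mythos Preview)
Your proposal follows the same strategy as the paper's proof: reduce to the temporal error via Theorem~\ref{eq.strong-trun}, split into kernel-perturbation, time-freezing, and recursive pieces, handle the first two with Proposition~\ref{Holder-un} and spectral estimates in the spirit of Lemmas~\ref{Gn-regularity}--\ref{DeltaGn-regularity}, and close with Gronwall.

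One point needs care. The recursive term you obtain involves $e^{m,n}(\eta_m(s),\kappa_n(y))$, not $e^{m,n}(s,\cdot)$, so your displayed inequality with integrand $\sup_x\|e^{m,n}(s,x)\|_p^2$ is not what actually comes out of the decomposition, and the continuous singular Gronwall lemma \cite[Lemma~3.4]{GI98} does not apply directly. The paper deals with this by first writing
\[
a(t)\le C_\epsilon m^{-\frac{3-\epsilon}{4}}+C\int_0^t(t-\eta_m(s))^{-\frac34-\epsilon}a(\eta_m(s))\,\ud s,
\]
specializing to grid points $t=t_k$ to get a discrete convolution inequality, invoking a \emph{discrete} Gronwall lemma (\cite[Lemma~A.4]{Kruse}) to bound $\sup_k a(t_k)$, and only then returning to general $t$. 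You can alternatively pass to the nondecreasing envelope $\bar a(t)=\sup_{r\le t}a(r)$ and verify that it satisfies the continuous Gronwall hypothesis, but either way the closure is not quite as automatic as ``exactly as in the proof of Theorem~\ref{eq.strong-trun}''. Everything else in your outline matches the paper.
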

\begin{proof}
Let $(t,x)\in[0,T]\times\OO$ and $0<\epsilon\ll 1$.
By virtue of Theorem \ref{eq.strong-trun}, it remains to show 
\begin{align*}
\|u^{m,n}(t,x)-u^n(t,x)\|_{p}\le C m^{-\frac{3-\epsilon}{8}}.
\end{align*}
By \eqref{unR}, \eqref{unRF}, the Minkowski inequality, the Burkholder inequality, and the Lipschitz continuity of $\sigma$ and $b$, we obtain that for any $p\ge 2$,
\begin{align*}
& \|u^{m,n}(t,x)-u^n(t,x)\|^2_{p}\le CH_{1}^{m,n}+CH_2^{m,n}+CQ_{1}^{m,n}+CQ_2^{m,n}\\
&+C\int_{0}^{t}\int_{\mathcal O}|\Delta_nG^n_{t-\eta_m(s)}(x,y)|\|u^{m,n}(\eta_m(s),\kappa_n(y))-u^{n}(\eta_m(s),\kappa_n(y))\|^2_{p}\ud y\ud s\\
&+C\int_{0}^{t}\int_{\mathcal O}|G^n_{t-\eta_m(s)}(x,y)|^2\|u^{m,n}(\eta_m(s),\kappa_n(y))-u^{n}(\eta_m(s),\kappa_n(y))\|_{p}^2\ud y\ud s,
\end{align*}
where 
\begin{align*}
H_{1}^{m,n}:&=\Big|\int_{0}^{t}\int_{\mathcal O}\big|\Delta_nG^n_{t-\eta_m(s)}(x,y)-\Delta_nG^n_{t-s}(x,y)\big|\|f(u^{n}(\eta_m(s),\kappa_n(y)))\|_{p}\ud y\ud s\Big|^2,\\
H_{2}^{m,n}:&=\Big|\int_{0}^{t}\int_{\mathcal O}|\Delta_nG^n_{t-s}(x,y)|\|f(u^{n}(\eta_m(s),\kappa_n(y)))-f(u^{n}(s,\kappa_n(y)))\|_{p}\ud y\ud s\Big|^2,\\
Q_{1}^{m,n}:&=\int_{0}^{t}\int_{\mathcal O}|G^n_{t-\eta_m(s)}(x,y)-G^n_{t-s}(x,y)|^2\|\sigma(u^{n}(\eta_m(s),\kappa_n(y)))\|_{p}^2\ud y\ud s,\\
Q_2^{m,n}:&=\int_{0}^{t}\int_{\mathcal O}|G^n_{t-s}(x,y)|^2\|\sigma(u^{n}(\eta_m(s),\kappa_n(y)))-\sigma(u^{n}(s,\kappa_n(y)))\|_{p}^2\ud y\ud s.
\end{align*}
Taking advantage of Corollary \ref{Holder-un} and \eqref{GnGn}, we obtain
$$H_{2}^{m,n}+Q_{2}^{m,n}\le C_\epsilon\sup_{t}|\eta_m(t)-t|^{\frac{3-\epsilon}{4}}\le C_\epsilon m^{-\frac{3-\epsilon}{4}}.$$
Similar to the proof of \eqref{Gnt-Gns}, one has that for $\alpha<\frac{3}{8}$,
\begin{align*}
\int_{0}^{t}\int_{\mathcal O}|G^n_{t-\eta_m(s)}(x,y)-G^n_{t-s}(x,y)|^2\ud y\ud s\le C_\epsilon \sup_t|\eta_m(t)-t|^{2\alpha},
\end{align*}
which along with the boundedness of $\sigma$ shows that $Q_{1}^{m,n}\le Cm^{-\frac{3(1-\epsilon)}{4}}$.
Similar to \eqref{Gn2-1} with $x=y$, we also have that for $\alpha<\frac{3}{8}$,
\begin{align*}
\int_{0}^{t}\int_{\mathcal O}\big|\Delta_nG^n_{t-\eta_m(s)}(x,y)-\Delta_nG^n_{t-s}(x,y)\big|\ud y\ud s
\le C\sup_t|\eta_m(t)-t|^{\alpha},
\end{align*}
which along with \eqref{eq.unbound} reveals that $H_{1}^{m,n}\le C_\epsilon m^{-\frac{3-\epsilon}{4}}$. 
Gathering the above estimates together yields that for any $t\in[0,T]$,
\begin{align*}
	&\quad\sup_{x}\|u^{m,n}(t,x)-u^n(t,x)\|^2_{p}\\
	&\le C_{\epsilon}m^{-\frac{3-\epsilon}{4}}+C\int_{0}^{t}\int_{\mathcal O}|\Delta_nG^n_{t-\eta_m(s)}(x,y)|\sup_{x}\|u^{m,n}(\eta_m(s),x)-u^n(\eta_m(s),x)\|^2_{p}\ud y\ud s\\
	&\quad+C\int_{0}^{t}\int_{\mathcal O}|G^n_{t-\eta_m(s)}(x,y)|^2\sup_{x}\|u^{m,n}(\eta_m(s),x)-u^n(\eta_m(s),x)\|^2_{p}\ud y\ud s.
\end{align*}
Letting $a(t):=\sup_{x}\|u^{m,n}(t,x)-u^n(t,x)\|^2_{p}$, $t\in[0,T]$ and using \eqref{GnGn}, we obtain
\begin{align}\label{at}
	a(t)\le C_{\epsilon}m^{-\frac{3-\epsilon}{4}}+C\int_0^t(t-\eta_m(s))^{-\frac{3}{4}-\epsilon}a(\eta_m(s))\ud s,\quad t\in[0,T].
\end{align}
Hence, for any $k=1,2,\ldots,m$, 
\begin{align*}
	a(t_k)&\le C_{\epsilon}m^{-\frac{3-\epsilon}{4}}+C\int_0^{t_k}(t_k-\eta_m(s))^{-\frac{3}{4}-\epsilon}a(\eta_m(s))\ud s
	=C_{\epsilon}m^{-\frac{3-\epsilon}{4}}+C\tau\sum_{i=0}^{k-1}t_{k-i}^{-\frac{3}{4}-\epsilon}a(t_i),
\end{align*}
which together with the discrete Gronwall lemma (see e.g. \cite[Lemma A.4]{Kruse}) implies that $\sup_{0\le k\le m}a(t_k)\le C_{\epsilon}m^{-\frac{3-\epsilon}{4}}.$
Finally, taking \eqref{at} into account gives
\begin{align*}
	a(t)&\le C_{\epsilon}m^{-\frac{3-\epsilon}{4}}+C_{\epsilon}m^{-\frac{3-\epsilon}{4}}\int_0^t(t-\eta_m(s))^{-\frac{3}{4}-\epsilon}\ud s
	\le C_{\epsilon}m^{-\frac{3-\epsilon}{4}},\quad t\in[0,T].
\end{align*}
Thus the proof is complete.
\end{proof}
\begin{rem}
The application of the orthogonality of $\{\phi_j\circ\kappa_n\}_{j\in\mathbb Z_n}$  plays a key role to obtain the temporal convergence order nearly $\frac38$ of the full discretization in Theorem \ref{Err}.
 For example, if the left hand of \eqref{Gn1-1} is estimated in the following way  
\begin{align*}
&\quad\int_0^s\int_{\mathcal O}|\Delta_nG^n_{t-r}(x,z)-\Delta_nG^n_{s-r}(x,z)|\ud z\ud r\\
&\le C\int_0^s\sum_{j=0}^{n-1}|\lambda_{j,n}|\exp(-\lambda_{j,n}^2(s-r))[1-\exp(-\lambda_{j,n}^2(t-s))]
\ud r\\
&
\le C \sum_{j=1}^{n-1}\frac{\lambda_{j,n}^{2\alpha}(t-s)^{\alpha}}{-\lambda_{j,n}}
\le C \sum_{j=1}^{\infty}j^{4\alpha-2}(t-s)^{\alpha}\le C_\alpha(t-s)^{\alpha},
\end{align*}
with $\alpha\in(0,\frac{1}{4})$,
 then we can only obtain $\int_0^s\int_{\mathcal O}|\Delta_nG^n_{t-r}(x,z)-\Delta_nG^n_{s-r}(x,z)|\ud z\ud r\le C(\alpha)|t-s|^{\frac{1}{4}-\epsilon}$ with $0<\epsilon\ll 1$. As a result, the temporal H\"older continuity exponent of $u^n$ is only nearly ${\frac{1}{4}}$, which leads to that  the temporal convergence order of the exponential Euler method is only nearly $\frac{1}{4}$.
\end{rem}
Combining Theorem \ref{Err} with a localized argument, we show  an $L^p(\Omega;\R)$ convergence order localized on a set of arbitrarily large probability for Eq.\ \eqref{CH} with polynomial nonlinearity.
\begin{cor}
Suppose that Assumption \ref{A2} hold and $u_0\in\C^3(\OO)$. Then for any $R\ge1$, $0<\epsilon\ll 1$ and $p\ge1$, there exists $C=C(R,T,p,\epsilon)$ such that
\begin{align*}
\E\left[\mathbf1_{\Omega_R\cap\Omega_{R}^{m,n}}|u^{m,n}(t,x)-u(t,x)|^p\right]\le C\big(n^{-1}+m^{-\frac{3-\epsilon}{8}}\big).
\end{align*}
\end{cor}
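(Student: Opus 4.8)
The plan is to reduce the polynomial case to the Lipschitz case covered by Theorem~\ref{Err}, via the truncation (localization) argument already used for well-posedness in Section~\ref{S2}. Fix $R\ge1$. Let $f_R$ be the cutoff nonlinearity from \eqref{LCH}, i.e.\ a globally Lipschitz function with $f_R(x)=f(x)$ for all $|x|\le R$ (say, keep the cubic on $[-R,R]$ and extend it affinely outside), whose Lipschitz constant depends on $R$. Denote by $u_R$ the (unique, globally defined) mild solution of \eqref{CH} with $f$ replaced by $f_R$, and by $u_R^{m,n}$ the full discretization obtained by replacing $f$ with $f_R$ in \eqref{unRF}. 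Since $f_R$ satisfies Assumption~\ref{A1}, the equation for $u_R$ is well-posed and $u_R$ obeys the moment bound \eqref{eq:truncated-bound} by the Picard argument recalled in Section~\ref{S2}; since moreover $u_0\in\C^3(\OO)$, Theorem~\ref{Err} applies directly to it, giving, for $p\ge1$ and $0<\epsilon\ll1$,
\begin{align}\label{eq.cor-loc}
\|u_R^{m,n}(t,x)-u_R(t,x)\|_{p}\le C(R,T,p,\epsilon)\big(n^{-1}+m^{-\frac{3-\epsilon}{8}}\big),\qquad(t,x)\in[0,T]\times\OO.
\end{align}

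Next I would identify the original objects with these truncated ones on the localization sets $\Omega_R:=\{\sup_{t,x}|u(t,x)|\le R\}$ and $\Omega_R^{m,n}:=\{|u^{m,n}(t_i,kh)|\le R~\text{for all}~0\le i\le m,~k\in\Z_n\}$. On $\Omega_R$ the path of $u$ never leaves $[-R,R]$, hence $f(u(t,x))=f_R(u(t,x))$ for all $(t,x)$, so that $u$ satisfies, pathwise on $\Omega_R$, the mild formulation of the $f_R$-equation; conversely $u_R$ satisfies the $f$-equation on $\{\sup_{t,x}|u_R(t,x)|\le R\}$. Combining these two observations with the global uniqueness for the $f$-equation (Assumption~\ref{A2}) and for the $f_R$-equation (Assumption~\ref{A1}), the trapping argument of Section~\ref{S2} — which rests on the local property of stochastic integrals — yields $u=u_R$ a.s.\ on $\Omega_R$. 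The discrete identification is elementary: on $\Omega_R^{m,n}$ an induction over the time levels $t_0,t_1,\ldots,t_m$ in \eqref{unRF} shows that the $f$-scheme and the $f_R$-scheme produce the same grid values — because \eqref{unRF} evaluates $f$ only at grid values and $f=f_R$ there — whence $u^{m,n}(t,x)=u_R^{m,n}(t,x)$ for every $(t,x)$, both being the same explicit functional of those grid values.

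Finally I would assemble the estimate: on $\Omega_R\cap\Omega_R^{m,n}$ one has the pathwise identity $u^{m,n}(t,x)-u(t,x)=u_R^{m,n}(t,x)-u_R(t,x)$, so discarding the indicator and invoking \eqref{eq.cor-loc},
\begin{align*}
\E\big[\mathbf{1}_{\Omega_R\cap\Omega_R^{m,n}}|u^{m,n}(t,x)-u(t,x)|^p\big]
&\le\E\big[|u_R^{m,n}(t,x)-u_R(t,x)|^p\big]\\
&\le C(R,T,p,\epsilon)\big(n^{-1}+m^{-\frac{3-\epsilon}{8}}\big),
\end{align*}
where the last step uses $p\ge1$ and $n,m\ge1$, via $(a+b)^p\le 2^{p-1}(a^p+b^p)$ and $a^p\le a$ for $a\in[0,1]$ applied with $a=n^{-1}$ and $a=m^{-(3-\epsilon)/8}$. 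I expect the only delicate point to be the pathwise identification $u=u_R$ on $\Omega_R$: one must set up the localization so that the pointwise coincidence of $f$ and $f_R$ along the solution path can be combined with uniqueness rigorously — exactly the mechanism behind the $\tau_\infty=+\infty$ argument of Section~\ref{S2}, which is also why some care is needed in the precise choice of norm defining $\Omega_R$ (and, correspondingly, $\Omega_R^{m,n}$).
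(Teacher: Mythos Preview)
Your proposal is correct and follows essentially the same localization strategy as the paper: truncate $f$ to a Lipschitz $f_R$, apply Theorem~\ref{Err} to the truncated problem, and identify the truncated with the original objects on $\Omega_R$ and $\Omega_R^{m,n}$ before dropping the indicator. The only minor differences are that the paper takes $f_R=K_Rf$ and defines $\Omega_R^{m,n}:=\{\sup_{t,x}|u^{m,n}(t,x)|\le R\}$ over all $(t,x)$ (invoking the local property of stochastic integrals for both identifications), whereas your grid-point definition of $\Omega_R^{m,n}$ together with the pathwise induction---legitimate because the exponential Euler scheme at grid points is an explicit recursion in $U(t_0),\ldots,U(t_i)$ and the Brownian increments---is a valid and slightly more elementary route to the discrete identification.
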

\begin{proof}
For $R\ge1$, denote $\Omega_R:=\big\{\omega\in\Omega:\sup_{t,x}|u(t,x,\omega)|\le R\big\}.$ 
Set $f_R=K_Rf$ with $K_R$ defined by \eqref{KR}. 
Consider the  localized Cahn-Hilliard equation
\begin{align}\label{ur}
\partial_t u_R+\Delta^2u_R=\Delta f_R(u_R)+\sigma(u_R)\dot{W},\quad R\ge1
\end{align}
with $u_R(0,\cdot)=u_0$ and  DBCs. Then the local property of stochastic integrals shows $u=u_R$ (i.e., for any $(t,x)\in[0,T]\times\OO$, $u(t,x)=u_R(t,x)$) on $\Omega_R$ a.s. 
Consider the numerical solution $u_R^{m,n}$ of \eqref{ur} based on the FDM in space and the exponential Euler method in time, i.e.,
\begin{align}\label{unRn}\notag
u_R^{m,n}(t,x)=&\int_{\mathcal O}G^n_t(x,y)u_0(\kappa_n(y))\ud y+\int_0^t\int_{\mathcal O}\Delta_nG^n_{t-\eta_m(s)}(x,y)f_R(u_R^{m,n}(\eta_m(s),\kappa_n(y)))\ud y\ud s\\
&+\int_0^t\int_{\mathcal O}G^n_{t-\eta_m(s)}(x,y)\sigma(u_R^{m,n}(\eta_m(s),\kappa_n(y)))W(\ud s,\ud y),
\end{align}
for $n,m\ge2$ and $(t,x)\in[0,T]\times\OO$. Seting
$\Omega_{R}^{m,n}:=\big\{\omega\in\Omega: \sup_{t,x}|u^{m,n}(t,x,\omega)|\le R\big\}$, and comparing \eqref{unRn} with \eqref{unRF}, it follows from the local property of stochastic integrals that $u_R^{m,n}=u^{m,n}$ on $\Omega_{R}^{m,n}$ a.s.
For fixed $R\ge1$, since $f_R$ satisfies Assumption \ref{A1}, Theorem \ref{Err} indicates that there exists some constant $C=C(R,T,p,\epsilon)$ such that 
for $0<\epsilon\ll 1$ and $p\ge1$,
\begin{equation*}
\E\left[|u^{m,n}_R(t,x)-u_R(t,x)|^p\right]\le C\big(n^{-1}+m^{-\frac{3-\epsilon}{8}}\big).
\end{equation*}
Since $u^{m,n}(t,x)$ and $u(t,x)$ have almost surely continuous trajectories, we have $\lim_{R\rightarrow \infty}\mathbb P(\Omega_R)=\lim_{R\rightarrow \infty}\mathbb P(\Omega_R^{m,n})=1$, which implies $\lim_{R\rightarrow \infty}\mathbb P(\Omega_R\cap\Omega_{R}^{m,n})=1$.
By  $u=u_R$ on $\Omega_R$ a.s., and $u_R^{m,n}=u^{m,n}$ on $\Omega_{R}^{m,n}$ a.s., we obtain 
\begin{align*}
\E\left[\mathbf1_{\Omega_R\cap\Omega_{R}^{m,n}}|u^{m,n}(t,x)-u(t,x)|^p\right]\le
\E\left[|u^{m,n}_R(t,x)-u_R(t,x)|^p\right]\le C\big(n^{-1}+m^{-\frac{3-\epsilon}{8}}\big).
\end{align*}
The proof is completed.
\end{proof}

\begin{rem}
Theorems \ref{eq.strong-trun} and \ref{dTVR} indicate that when applying the spatial FDM to the localized Cahn--Hilliard equation \eqref{LCH}, the associated numerical solution is strongly convergent and the density of the numerical solution converges in $L^1(\R)$. In addition, Section \ref{S2} gives the uniform moment estimate and H\"older continuity of the exact solution for Eq.\ \eqref{CH} with $f$ being a polynomial of degree $3$ with a positive dominant coefficient. We expect to combine the above results with the localization technique to study the strong convergence of the spatial FDM and the density convergence of the associated numerical solution for the stochastic Cahn–Hilliard equation with polynomial nonlinearity and multiplicative noise in the future.

\end{rem}

\bibliographystyle{plain}
\bibliography{mybibfile}

\end{document}